\pgfplotsset{compat=1.7}
\crefname{assumption}{Assumption}{Assumptions}
\Crefname{assumption}{Assumption}{Assumptions}
\crefname{figure}{Figure}{Figures}
\theoremstyle{plain}
\newtheorem{theorem}{Theorem}[section]
\newtheorem{lemma}[theorem]{Lemma}
\newtheorem{corollary}[theorem]{Corollary}
\theoremstyle{definition}
\newtheorem{definition}[theorem]{Definition}
\newtheorem{assumption}[theorem]{Assumption}
\theoremstyle{remark}
\newtheorem{remark}[theorem]{Remark}
\providecommand{\MSC}[1]{\textbf{Mathematics Subject Classification} #1}
\newdimen\fwd
\DeclareMathOperator*{\upd}{Update}
\newcommand{\norm}[1]{\ensuremath{\lVert #1\rVert}}
\newcommand{\xopt}{\ensuremath{x^\ast}}
\newcommand{\Lo}{{\L}ojasiewicz}
\newcommand{\KL}{Kurdyka--\Lo}
\newcommand{\minres}{\textsc{MINRES}}
\newcommand{\Ballop}[1]{\ensuremath{\mathbb{B}_{#1}}}
\newcommand{\Lin}{\ensuremath{\mathcal{L}}}
\newcommand{\tl}{\ensuremath{\epsilon}}
\newcommand{\brk}{\ensuremath{break}}
\newcommand{\outp}{\ensuremath{output}}
\newcommand{\CD}{{\cal D}}
\newcommand{\CS}{{\cal S}}
\newcommand{\CJ}{{\cal J}}
\newcommand{\CI}{{\cal I}}
\newcommand{\CJopt}{{\cal J^\ast}}
\newcommand{\CX}{{\cal X}}
\newcommand{\CN}{{\cal N}}
\newcommand{\R}{{\mathbb{R}}}
\newcommand{\N}{\mathbb{N}} 
\newcommand{\AlgTu}{Algorithm~TULIP} 
\newcommand{\Tu}{TULIP} 
\newcommand{\AlgRo}{Algorithm~ROSE} 
\newcommand{\Ro}{ROSE} 
\newcommand{\lam}{\ensuremath{\lambda}} 
\newcommand{\Lam}{\ensuremath{\Lambda}}
\newcommand{\Bp}{\ensuremath{s}}
\newcommand{\Bz}{\ensuremath{z}}
\newcommand{\GM}{\ensuremath{g}}
\newcommand{\FAIRTEXT}{{\textsc{FAIR}}}
\newcommand{\LBFGS}{\mbox{L-BFGS}}
\newcommand{\BB}{\mbox{Barzilai--Borwein}}
\newcommand{\eps}{\varepsilon}
\title{A structured L-BFGS method with diagonal scaling and its application to image registration}
\author{Florian Mannel\thanks{Institute of Mathematics and Image Computing, University of Lübeck, Maria-Goeppert-Straße 3, 23562 
		Lübeck, Germany (\href{mailto:florian.mannel@uni-luebeck.de}{florian.mannel@uni-luebeck.de}/\href{mailto:hariom85@gmail.com}{hariom85@gmail.com})}
		\,\href{https://orcid.org/0000-0001-9042-0428}{\includegraphics[height=.35cm]{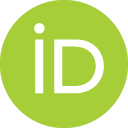}}
	\and Hari Om Aggrawal\footnotemark[1]\, \href{https://orcid.org/0000-0002-8892-5993}{\includegraphics[height=.35cm]{ORCIDiD128x128.png}}}
\date{Preprint, \today}
\begin{document}

\maketitle

\begin{abstract}
	We devise an \LBFGS~method for optimization problems in which the objective is the sum of two functions, 
	where the Hessian of the first function is computationally unavailable while the Hessian of the second function
	has a computationally available approximation that allows for cheap matrix-vector products. 
	This is a prototypical setting for many inverse problems. 
	The proposed \LBFGS~method exploits the structure of the objective to construct a more accurate Hessian approximation than in standard \LBFGS. 
	In contrast to existing works on structured \LBFGS, we choose the first part of the seed matrix, which approximates the Hessian of the first function,
	as a diagonal matrix rather than a multiple of the identity. 
	We derive two suitable formulas for the coefficients of the diagonal matrix and show that this boosts performance on real-life image registration problems,
	which are highly non-convex inverse problems.
	The new method converges globally and linearly on non-convex problems under mild assumptions in a general Hilbert space setting, making it applicable to a broad class of inverse problems. An implementation of the method is freely available. 
\end{abstract}

\begin{keywords}
	Structured \LBFGS, seed matrix, diagonal scaling, non-convex optimization, inverse problems, medical image registration
\end{keywords}

\MSC{65J22 $\cdotp$ 65K05 $\cdotp$ 65K10 $\cdotp$ 90C06 $\cdotp$ 90C26 $\cdotp$ 90C30 $\cdotp$ 90C48 $\cdotp$ 90C53 $\cdotp$ 90C90}



\section{Introduction}
	
	\subsection{Topic}\label{sec_topic}
	
	In this paper we study a new \LBFGS-type method for unconstrained optimization problems 
	\begin{equation*}
		\min_{x\in\CX}\,\CJ(x)
	\end{equation*}
	with a cost function of the form
	\begin{equation}\label{eq_SO}
		\CJ:\CX\to\R,\qquad \CJ(x)=\CD(x)+\CS(x).
	\end{equation}
	Among others, this is a prototypical setting for inverse problems, where $\CD:\CX\rightarrow\R$ represents a data-fitting term, $\CS:\CX\rightarrow\R$ a regularizer, and $\CX$ a Hilbert space. 
	In this setting it is often the case that, after discretization, the data-fitting term has an ill-conditioned and dense Hessian for which even matrix-vector multiplications are computationally expensive, whereas the Hessian of the regularizer is positive definite, well-conditioned and sparse with computationally cheap matrix-vector products. The \LBFGS~method \cite{N80,LN89,BNS94} is one of the most widely used algorithms for large-scale inverse problems, but it
	does not take advantage of the splitting in~\cref{eq_SO}. In our recent work \cite{AMM24} we introduced an \LBFGS-type method called \Tu~that exploits the 
	different structural properties of the two terms in \cref{eq_SO}. We proved the method's global and linear convergence including for the case that the cost function $\CJ$ is non-convex with singular Hessian, and we demonstrated in numerical experiments that it outperforms standard \LBFGS~as well as other structured \LBFGS-type methods on real-world image registration problems. 
	
	In this paper we present \Ro, an algorithm that further improves the numerical performance of \Tu~for image registration problems while
	offering identical convergence guarantees. 
	The improvement, which we demonstrate in the numerical experiments in \cref{sec:experiments}, 
	can be attributed primarily to the fact that in \Ro~the Hessian of $\CD$ is approximated in the seed matrix by a diagonal matrix rather than a multiple of the identity. Let us explain this in more detail. 
	
	In the classical \LBFGS~method with memory length $\ell\in\N_0$, 
	the \LBFGS~operator $B_{k}\approx\nabla^2\CJ(x_{k})$ is obtained as $B_{k}:=B_{k}^{(\ell)}$ from the recursion
	\begin{equation*}
		B_{k}^{(j+1)}:=B_{k}^{(j)} + \upd\bigl(s_{m+j},y_{m+j},B_{k}^{(j)}\bigr), \qquad j=0,\ldots,\ell-1.
	\end{equation*}
	Here, $m:=\max\{0,k-\ell\}$, the stored update vectors are $\{(s_{j},y_{j})\}_{j=m}^{k-1}$, where
	$s_j:=x_{j+1}-x_j$ and $y_j:=\nabla\CJ(x_{j+1})-\nabla\CJ(x_j)$, and they satisfy $y_j^T s_j>0$ for all $j$.
	For $(s,y)\in\CX\times\CX$ with $y^T s>0$ and positive definite $B$ the update is given by 
	\begin{equation*}
		\upd\bigl(s,y,B\bigr):=\frac{y y^T}{y^T s} - \frac{B s s^T B^T}{s^T B s}.
	\end{equation*}
	It is a great advantage of the classical \LBFGS~method that if the seed matrix $B_{k}^{(0)}$ is a multiple of the identity $\tau_k I$ for some $\tau_k>0$, then
	the search direction $d_k=-B_k^{-1}\nabla\CJ(x_k)$ can be computed matrix free and without having to solve a linear system. In practice, this is efficiently realized through the \emph{two-loop recursion} (e.g. \cite[Fig.~1]{BB19}, \cite[Algorithm~7.4]{NW06}), enabling the use of \LBFGS~for large-scale problems. 
	On the other hand, this choice of the seed matrix does not take into account the structure \cref{eq_SO} and, in turn, does not benefit from the convenient properties of $\nabla^2 \CS(x_k)$. To change this, in \Tu~the seed matrix in iteration $k$ is taken to be
	\begin{equation*}
		B_k^{(0)} = \tau_k I + S_k,
	\end{equation*}
	where $S_k$ approximates $\nabla^2\CS(x_k)$ and is selected in such a way that $B_k^{(0)}$ is positive definite and linear systems involving $B_k^{(0)}$ can be solved cheaply at least approximately. 
	While the choice $B_k^{(0)} = \tau_k I + S_k$ is expected to make $B_k$ a better approximation of $\nabla^2\CJ(x_k)$ and improve the rate of convergence, 
	the computation of $d_k=-B_k^{-1}\nabla\CJ(x_k)$ now requires the solution of a linear system involving $B_k^{(0)}$. 
	In this paper, we consider seed matrices of the more general form 
	\begin{equation*}
		B_k^{(0)} = D_k + S_k,
	\end{equation*}
	where $D_k$ and $S_k$ are chosen in such a way that $B_k^{(0)}$ is positive definite and linear systems involving $B_k^{(0)}$ can be solved cheaply at least approximately. It is clear that this generalizes \Tu.
	The main focus in this paper is on the choice of $D_k$ as a diagonal matrix. In particular, we propose two formulas for the entries of the diagonal matrix in \cref{sec_SLBFGS} and we compare them numerically in \cref{sec:experiments}. We also use the convergence theory developed in \cite{AMM24} for \Tu~to obtain convergence results for \Ro~in \cref{sec_convana}. 
	
	In view of the structure \cref{eq_SO}, the matrices $\tau_k I$ and $S_k$, respectively, $D_k$ and $S_k$, may be regarded as approximations of the Hessians $\nabla^2\CD(x_k)$ and $\nabla^2\CS(x_k)$, respectively.
	It is therefore expected that the approximation quality of $B_k\approx\nabla^2\CJ(x_k)$ increases from \LBFGS~to \Tu~to \Ro.
	Consequently, \LBFGS~should usually require more iterations than \Tu, which should require more iterations than \Ro. 
	Since the increase in approximation quality in the structured methods \Tu~and \Ro~comes at the cost of (inexactly) solving one linear system per iteration, the question arises whether the structured approach actually lowers the run-time in comparison to standard \LBFGS. 
	In \cite{AMM24} the answer was affirmative for \Tu~when we considered a test set of 22~real-world problems from medical image registration. 
	In the present paper we find that \Ro~is significantly faster than \Tu~on the same set of test problems. 
	These problems are large-scale and highly non-convex inverse problems that involve various data-fitting terms and regularizers, indicating that 
	\Ro~is a promising method also for other inverse problems. 			
			
	\subsection{Related work}\label{lit_relwork}
	
	The idea to exploit the problem structure for constructing a better approximation of $\nabla^2\CJ(x_k)$ and use it as a seed matrix in \LBFGS~appears in the numerical studies \cite{JBES04,Heldmann2006,FAIR09,KR13,ACG18,YGJ18,AM21,LBLPT21,BDLP22,AMM24} that address
	a wide range of real-life problems. 
	On the considered large-scale problems, the authors report significant speed ups over all methods that are used for comparison,
	including standard \LBFGS, Gauss--Newton and truncated Newton.
	Among those contributions, only \cite{YGJ18,LBLPT21} employ a diagonal seed matrix other than a scaled identity, but they do not consider the structure \cref{eq_SO}. Thus, to the best of our knowledge, \AlgRo~is the first structured \LBFGS~method with proper diagonal scaling. Another important difference to the present paper is that except for \cite{AMM24}, convergence rates are only studied numerically.

	The convergence results of this work also hold for memory size zero. 
	In this case, no updates are applied, hence $B_k=B_k^{(0)}=D_k + S_k$. 
	For the choice $S_k=0$ this may be regarded as a method with \emph{diagonal \BB~step size} \cite{PDBS20}. 
	Again, however, it seems that this work is the first that integrates diagonal \BB~step sizes into a \emph{structured} method.
	We emphasize that the two choices proposed for $D_k$ in \cref{sec_SLBFGS} are inspired by \cite{PDBS20},
	but are still somewhat different. 
	
	For $\CS\equiv 0$ and $S_k=0$ for all $k$, \Ro~is an unstructured \LBFGS~method that uses a diagonal seed matrix. This class of methods has been studied in \cite{LN89,Gilbert1989,VAF00,SK09,ML13,LC13,B15,Dener2019,LWH22,BKAG22}, but convergence is usually shown for strongly convex objectives or not at all, except in \cite{LWH22}, where global convergence is proved for the non-convex case in the sense that $\liminf_{k\to\infty}\norm{\nabla\CJ(x_k)}=0$. In contrast, we have $\lim_{k\to\infty}\norm{\nabla\CJ(x_k)}=0$ in that case and a linear rate of convergence, cf.  \cref{thm_globconv,thm_rateofconvPL,thm_rateofconvstrongminimizer}. That is, in addition to its contribution in the structured setting, which is the main focus of this work, \Ro~also provides convergence guarantees that are stronger than existing ones for unstructured objectives. 
	Non-diagonal seed matrices have also been considered, for instance in \cite{AB99}, where the seed matrix itself contains low-rank updates.
	 
	In \emph{diagonal quasi-Newton methods} the Hessian is approximated by a diagonal matrix. 
	In contrast to an \LBFGS~based approach, however, low-rank updates are not applied to the diagonal matrix.
	References include \cite{ZNW99,LFH12,ELF16,An18,An21,LEK21,ABK23},
	but we are not aware of works that embed this approach in a structured method. 
	
	Despite the large amount of works that involve diagonal matrices, it seems that one of the two choices that we propose for $D_k$ in \cref{sec_SLBFGS} has not been considered before, while the other one may be regarded as a generalization of the diagonal matrix that appears in \cite{An19} within an unstructured diagonal quasi-Newton method. As mentioned above, however, there are no low-rank updates applied to the diagonal matrix in \cite{An19}, which differs from our approach. The numerical experiments in \cite{An19} show that the method requires less CPU time than BFGS, but more than \LBFGS. 
	On the theoretical side, \cite{An19} proves global convergence for strictly convex quadratics. 
		
	Structured variants of \emph{full memory} quasi-Newton methods have been studied 
	in various settings, cf. e.g. \cite{DW81,DW85,EM91,YY96,DMT89,AR10,La00,HK92,MR21,MR22},
	most often in the context of least squares problems \cite{DMT89,H94,H05,ZC10,MM19}. 
	However, they do not allow for a seed matrix, so by design they are somewhat different from \AlgRo. 
		
	\subsection{Main contributions}
		
	The main contributions of this paper are that 
	\begin{itemize}
			\item we present \Ro, the first structured \LBFGS~method with diagonal scaling.
			Additionally, the specific diagonal scaling that we propose is new even for unstructured \LBFGS;
			\item we obtain global and linear convergence of \Ro~for non-convex problems 
			without assuming invertibility of the Hessian, cf. \cref{thm_globconv,thm_rateofconvPL}.
			Such strong results are not available for other structured \LBFGS~methods except for our own
			method \Tu~\cite{AMM24}; 
			\item we show that \Ro~outperforms \Tu~on real-world image registration problems. 
			This is significant since \Tu~outperforms standard \LBFGS~as well as competing structured \LBFGS~methods on the same set of problems \cite{AMM24};
			\item we work in Hilbert space, which is rarely done for \LBFGS~and also for diagonal scaling.
			This is valuable for instance because infinite dimensional Hilbert spaces are a natural setting for many inverse problems.
	\end{itemize}
	
	\subsection{Code availability}
	
	An implementation of our structured \LBFGS~method that includes an example from the numerical section of this paper
	is freely available at \href{https://github.com/hariagr/SLBFGS}{https://github.com/hariagr/SLBFGS}.
		
	\subsection{Organization and notation}
	
	The paper is organized as follows. 
	In \cref{sec_SLBFGS} we introduce \Ro. 
	\Cref{sec_convana} collects the convergence results 
	and \cref{sec:experiments} contains the numerical experiments. 
	Conclusions are drawn in \cref{sec:conclusion}.
	
	We use $\N=\{1,2,3,\ldots\}$ and $\N_0=\N\cup\{0\}$.
	The scalar product of $x,y\in\CX$ is indicated by $x^T y$, and for $x\in\CX$ the linear functional
	$y\mapsto x^T y$ is denoted by $x^T$. 
	The induced norm is $\norm{x}$. 
	We write $M\in\Lin(\CX)$ if $M:\CX\rightarrow\CX$ is 
	a bounded linear mapping. The notation $M\in\Lin_{\geq 0}(\CX)$ means that 
	$M\in\Lin(\CX)$ is symmetric and positive semi-definite. 
	For $M\in\Lin(\CX)$ we define
	\begin{equation*}
		\lam(M):=\inf_{\norm{v}=1}\norm{Mv}\qquad\text{ and }\qquad\Lam(M):=\sup_{\norm{v}=1}\norm{Mv}.
	\end{equation*}
	Apparently, we have $0\leq\lam(M)\leq\Lam(M)$. If $M\in\Lin_{\geq 0}(\CX)$, then $\lam(M)=\inf_{\norm{v}=1}v^T Mv$ and $\Lam(M)=\norm{M}=\sup_{\norm{v}=1}v^T Mv$. 
	Furthermore, if $M\in\Lin_{\geq 0}(\CX)$ is positive definite, then it is invertible, $M^{-1}$ is symmetric and $\lam(M^{-1})=\Lam(M)^{-1}>0$ as well as $\Lam(M^{-1}) = \lam(M)^{-1}>0$. If $\CX$ is finite dimensional, then $\lam(M)$ and $\Lam(M)$ are the smallest and largest eigenvalue of $M\in\Lin_{\geq 0}$.
	

\section{The structured \LBFGS~method \Ro}\label{sec_SLBFGS}
	
	The proposed structured \LBFGS~algorithm with seed matrix $B_k^{(0)}=D_k+S_k$ 
	is summarized as Algorithm~\ref{alg_ROSE} (St\underline{r}uctured inverse \LBFGS~meth\underline{o}d with flexible \underline{se}ed matrix) below. 
	
	\begin{algorithm2e}
		\SetAlgoRefName{ROSE}
		\DontPrintSemicolon
		\caption{Structured inverse \LBFGS~method with flexible seed matrix}
		\label{alg_ROSE}
		\KwIn{$x_0\in \CX$, $\tl\geq 0$, $\ell\in\N_0$, $c_0\geq 0$, $C_0\in [c_0,\infty]$, $c_s,c_1,c_2>0$}
		Choose $S_0\in\Lin_{\geq 0}(\CX)$\\
		Let $D_0 = \tau I$ for some $\tau>0$\\
	\For{$k=0,1,2,\ldots$}
	{
	Let $m:=\max\{0,k-\ell\}$\\
	Let $B_k^{(0)}:=D_k+S_k$\label{line_choiceofseed}\tcp*{choice of seed matrix}
	Compute $d_k:=-B_k^{-1}\nabla\CJ(x_k)$ from $B_k^{(0)}$ and the stored pairs $\{(s_j,y_j)\}_{j=m}^{k-1}$ using the two-loop recursion \cite[Algorithm~7.4]{NW06}\label{line_tlr}\\
	Compute step length $\alpha_k>0$ using a line search\\ 
	Let $s_k:=\alpha_k d_k$, $x_{k+1}:=x_k + s_k$, $y_k:=\nabla\CJ(x_{k+1})-\nabla\CJ(x_k)$\\
	\lIf{$y_k^T s_k > c_s\norm{s_k}^2$}{append $(s_k,y_k)$ to storage\label{line_acceptanceofysforstorage}}\vspace*{-\baselineskip}
	\tcp*[f]{cautious update~1}\\
	\lIf{$k\geq\ell$}{remove $(s_m,y_m)$ from storage}
	\lIf{$\norm{\nabla\CJ(x_{k+1})}\leq\tl$}{\outp~$x_{k+1}$ and \brk\label{line_termination}}
	Choose $S_{k+1}\in\Lin_{\geq 0}(\CX)$\label{line_Skpossemdef}\\ 
	Let $z_k:=y_k-S_{k+1}s_k$\label{line_zk}\\
	Let $\omega_{k+1}^l:=\min\{c_0,c_1\norm{\nabla\CJ(x_{k+1})}^{c_2}\}$ and $\omega_{k+1}^u:=\max\{C_0,(c_1\norm{\nabla\CJ(x_{k+1})}^{c_2})^{-1}\}$\\
	\lIf{$z_k^Ts_k>0$}{let $T_{k+1}:=\bigl[\omega_{k+1}^l,\omega_{k+1}^u\bigr]$ \textbf{else} let $T_{k+1}:=\bigl[\omega_{k+1}^l,P_{k+1}(\tau_{k+1}^\GM)\bigr]$\label{line_choicetauk1}}
	Choose $D_{k+1}\in\Lin_{\geq 0}(\CX)$ such that $\lam(D_{k+1}),\Lam(D_{k+1})\in T_{k+1}$\label{line_choiceofDkp1}\tcp*[f]{cautious update~2}
	}
	\end{algorithm2e}
	
	Here, $P_{k+1}:\R\rightarrow\R$ projects onto $[\omega_{k+1}^l,\omega_{k+1}^u]$, i.e., 
	$P_{k+1}(t):=\min\{\omega_{k+1}^u,\max\{\omega_{k+1}^l,t\}\}$.
	The scalar $\tau_{k+1}^\GM$ is defined in \cref{def_tau_B} below. 
	
	The main differences between Algorithm~\ref{alg_ROSE} and standard \LBFGS~are the choice of the seed matrix $B_k^{(0)}$ 
	and the two cautious updates that affect if $(s_k,y_k)$ enters the storage and that restrict the choice of $D_{k+1}$.  
	We stress that the two-loop recursion in \cref{line_tlr} requires solving a linear system whose system matrix is $B_k^{(0)}$.
		In standard \LBFGS, where $B_k^{(0)}=\tau_k I$, this is a trivial task, but in \ref{alg_ROSE}, where $B_k^{(0)}=D_k+S_k$, 
		this is usually non-trivial.  
		As discussed in \cref{sec_topic}, it is therefore important to choose $D_k$ and $S_k$ in such a way that linear systems with $B_k^{(0)}$ are relatively cheap to solve either directly or iteratively. In the numerical experiments we choose $D_k$ as a diagonal matrix with (sufficiently) positive entries, so for many regularizers $\CS$ the choice $S_k=\nabla^2\CS(x_k)$ will result in linear systems that are cheap to solve.
		In \cref{line_choiceofDkp1} we see that the interval $T_{k+1}$ is used to restrict the spectrum of $D_{k+1}$, which is called \emph{cautious updating}. Cautious updating is critical for the strong convergence properties of \ref{alg_ROSE}; cf. \cref{sec_cautiousupdating} for details. 
			
	In comparison to Algorithm~TULIP from \cite{AMM24}, \ref{alg_ROSE} is more general. Specifically, 
	the interval $T_{k+1}$ is larger in \ref{alg_ROSE} and $D_{k+1}$ in TULIP is restricted to multiples of the identity. 
	In the following subsections we offer two choices for $D_{k+1}$ and provide further commentary on \ref{alg_ROSE}.

	\subsection{Choice of \texorpdfstring{$D_k$}{D_k}}\label{sec_choiceoftaukinLBFGSM}
	
	A key element of \ref{alg_ROSE} is to use 
	\begin{equation*}
		B_{k+1}^{(0)} = D_{k+1} + S_{k+1}
	\end{equation*}
	as seed matrix, where $D_{k+1}$ is a symmetric positive semi-definite operator. 
	In classical \LBFGS~we have $D_{k+1}=\tau_{k+1} I$ for some $\tau_{k+1}>0$ and $S_{k+1}=0$ for all $k$,
	with $\tau_{k+1}=y_k^T s_k/\norm{y_k}^2$ being the most popular choice.
	This choice as well as some others can be derived from the Oren--Luenberger scaling strategy~\cite{Oren1982}, 
	which postulates that $B_{k+1}^{(0)}$ should satisfy the secant equation 
	$y_k=B_{k+1}^{(0)} s_k$ in a least squares sense. In the structured setting of this paper, 
	where $B_{k+1}^{(0)}=D_{k+1} + S_{k+1}$, the secant equation reads
	\begin{equation}\label{eq_Bk0Secant}
		D_{k+1} s_k-z_k=0,
		\qquad\text{where}\qquad 
		z_k:=y_k-S_{k+1} s_k.
	\end{equation}
	If $D_{k+1}$ is invertible, this equation has equivalent forms such as
	$D_{k+1}^{1/2}s_k-D_{k+1}^{-1/2}z_k=0$ or $s_k-D_{k+1}^{-1}z_k=0$.
	This motivates to choose $D_{k+1}$ in such a way that it minimizes the associated least squares problem, e.g. 
	$\norm{D_{k+1}s_k-z_k}$ or $\norm{D_{k+1}^{1/2}s_k-D_{k+1}^{-1/2}z_k}$. 
	On the other hand, it is well known that to maintain positive definiteness of $B_{k+1}$, the seed matrix $B_{k+1}^{(0)}$ has to be positive definite. In fact, to prove convergence of the algorithm, the positive definiteness of $B_{k+1}$ is not strong enough; it is also necessary to appropriately control the condition number of $B_{k+1}$ so that it does not go to infinity too quickly.
	For the problems that we are interested in, it is reasonable to expect that the approximations $S_{k+1}$ of the Hessians $\nabla^2\CS(x_{k+1})$ have well-behaved condition numbers.	
	In this setting, due to \cref{line_acceptanceofysforstorage}, the condition number of $B_{k+1}$ can be controlled 
	by ensuring that $\lam(D_{k+1})$ and $\Lam(D_{k+1})$ belong to an appropriate interval, denoted $T_{k+1}$ in \ref{alg_ROSE}, cf. \cref{line_choicetauk1}. 
	We comment further on $T_{k+1}$ when we discuss cautious updating in \cref{sec_cautiousupdating}.
	
	As it turns out, the convergence analysis of \cite{AMM24} goes through for Algorithm~\ref{alg_ROSE} without further specification of $D_{k+1}$. 
	That is, the convergence results that we obtain hold for any $D_{k+1}\in\Lin_{\geq 0}(\CX)$ satisfying 
	$\lambda(D_{k+1}),\Lambda(D_{k+1})\in T_{k+1}$. 
	However, to make the method efficient in practical computations it is crucial that linear systems with $B_{k+1}^{(0)}$ can be solved, at least approximately, in an efficient way. 
	Thus, although the convergence analysis does not require a specific structure of $D_{k+1}$, we are mainly interested in choices that lead to a seed matrix $B_{k+1}^{(0)}$ with favorable properties (for iterative solvers). Recall that we focus on the case where $S_{k+1}$ is (some sort of approximation of) a regularizer and hence positive definite, well-conditioned and cheap to evaluate in any given direction. It is then clear that
	we can choose $D_{k+1}$ as any symmetric positive semi-definite operator that is cheap to evaluate in all directions.
	In \cite{AMM24} we focused on $D_{k+1}=\tau_{k+1} I$, which is the most common choice for classical \LBFGS, but here we consider the more general situation that $D_{k+1}$ is a \emph{diagonal operator} with respect to a fixed orthonormal basis $(e_j)_{j\in\CI}\subset\CX$, where $\CI=\{1,2,\ldots,n\}$ if $\CX$ is $n$ dimensional and $\CI=\N$ if it is infinite dimensional.
	We say that $D_{k+1}\in\Lin(\CX)$ is diagonal with respect to $(e_j)_{j\in\CI}$ if there is a bounded sequence $(\gamma_j^{k+1})_{j\in\CI}\subset\R$ such that $D_{k+1} = \sum_{j\in\CI}\gamma_j^{k+1} e_j e_j^T$. In the following, we will usually suppress the $k$-dependency of the coefficients $(\gamma_j^{k+1})$ and write $(\gamma_j)$ instead. 
	From $\norm{D_{k+1}}=\sup_{j\in\CI}\gamma_j$ we infer that $D_{k+1}$ is bounded if and only if $\sup_{j\in\CI}\gamma_j<\infty$. Moreover, $D_{k+1}$ is positive semi-definite if and only if $\gamma_j\geq 0$ for all $j\in\CI$. 
	As $\lam(D_{k+1})=\inf_j\gamma_j$ and $\Lam(D_{k+1})=\sup_j\gamma_j$ it is easy to ensure $\lam(D_{k+1})\in T_{k+1}$ and $\Lam(D_{k+1})\in T_{k+1}$ in \cref{line_choiceofDkp1} of Algorithm~\ref{alg_ROSE}.
	If we set $\gamma_j=\tau_{k+1}$ for some $\tau_{k+1}>0$ and all $j$, we recover the classical choice $D_{k+1}=\tau_{k+1} I$. 
	Next we provide two possible choices for the diagonal elements $(\gamma_j)$ of $D_{k+1}=\sum_{j\in\CI}\gamma_j e_j e_j^T$.
	
	\paragraph{The first choice for \texorpdfstring{$D_{k+1}$}{D_{k+1}}}
	As outlined above, we want to minimize the residual $\norm{D_{k+1}s_k-z_k}$. 
	This leads to $\gamma_j=\frac{z_k^T e_j}{s_k^T e_j}$ if $s_k^T e_j\neq 0$, and $\gamma_j$ arbitrary if $s_k^T e_j=0$.
	Since we also want to ensure $\lam(D_{k+1}),\Lam(D_{k+1})\in T_{k+1}$, 
	cf. \cref{line_choiceofDkp1} in \ref{alg_ROSE}, we project each $\gamma_j$ onto 
	$T_{k+1}$. This is equivalent to saying that $(\gamma_j)$ minimizes the constrained least squares problem 
	$\norm{D_{k+1}s_k-z_k}$ s.t. $\lam(D_{k+1})\in T_{k+1} \,\wedge\, \Lam(D_{k+1})\in T_{k+1}$. 
	Writing $\hat P_{k+1}:\R\rightarrow\R$ for the projection onto $T_{k+1}$ this yields 
	\begin{equation}\label{eq_defgamBp}		
		\begin{cases*}
			\gamma_j = \hat P_{k+1}\Bigl(\frac{z_k^T e_j}{s_k^T e_j}\Bigr) & \text{ if } $s_k^T e_j \neq 0$,\\
			\gamma_j\in T_{k+1} & \text{ if } $s_k^T e_j = 0$.
		\end{cases*}
	\end{equation}
	Of course, in finite dimensions with the canonical basis $(e_j)_j$, the scalar products $z_k^T e_j$ and $s_k^T e_j$ are simply 
	the $j$-th components of the vectors $z_k$ and $s_k$, respectively. We point out that if $S_k=0$, then $y_k=z_k$, in which case \cref{eq_defgamBp} with the canonical basis is very similar to the diagonal matrix proposed in \cite{An19}. 
		
	\paragraph{The second choice for \texorpdfstring{$D_{k+1}$}{D_{k+1}}}
	Next we determine the coefficients $(\gamma_j)$ that minimize $\norm{D_{k+1}^{1/2}s_k-D_{k+1}^{-1/2}z_k}$ s.t. $\lam(D_{k+1})\in T_{k+1}\,\wedge\,\Lam(D_{k+1})\in T_{k+1}$.
	Since $\norm{D_{k+1}^{1/2}s_k-D_{k+1}^{-1/2}z_k}$ is minimal for $\gamma_j=\left\lvert\frac{z_k^T e_j}{s_k^T e_j}\right\rvert$ if $s_k^T e_j\neq 0$, and $\gamma_j$ arbitrary if $s_k^T e_j=0$, the coefficients $(\gamma_j)$ are optimal if
	\begin{equation}\label{eq_defgamBg}
		\begin{cases*}
			\gamma_j=\hat P_{k+1}\Bigl(\Bigl\lvert\frac{z_k^T e_j}{s_k^T e_j}\Bigr\rvert\Bigr) & \text{ if } $s_k^T e_j \neq 0$,\\
			\gamma_j\in T_{k+1} & \text{ if } $s_k^T e_j = 0$.
		\end{cases*}
	\end{equation}
	We have not seen this proposal for a diagonal matrix in the literature. 
	
	The following relation between the coefficients of \cref{eq_defgamBp} and \cref{eq_defgamBg} is obvious. 	
	
	\begin{lemma}\label{lem_relationbetweendifferenttaus}
		Let $(s_k,z_k)\in\CX\times\CX$ and let $(e_j)_{j\in\CI}\subset\CX$ for some $\CI\subset\N$.
		Denote by $T_{k+1}\subset\R$ a non-empty closed interval. Define $(\gamma_j)_{j\in\CI}$ according to \cref{eq_defgamBp} and $(\hat \gamma_j)_{j\in\CI}$ according to \cref{eq_defgamBg}, choosing $\gamma_j\leq\hat\gamma_j$ for any $j\in\CI$ with $s_k^T e_j=0$. Then $\gamma_j\leq\hat\gamma_j$ for all $j\in\CI$. 
	\end{lemma}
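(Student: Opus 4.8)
The plan is to argue entrywise, comparing $\gamma_j$ from \cref{eq_defgamBp} with $\hat\gamma_j$ from \cref{eq_defgamBg} for each fixed $j\in\CI$. The case $s_k^T e_j=0$ is handled by hypothesis, since we are explicitly told to choose $\gamma_j\le\hat\gamma_j$ there, so only the case $s_k^T e_j\ne 0$ needs work. In that case set $r_j:=\frac{z_k^T e_j}{s_k^T e_j}\in\R$, so that $\gamma_j=\hat P_{k+1}(r_j)$ and $\hat\gamma_j=\hat P_{k+1}(|r_j|)$, where $\hat P_{k+1}$ is the metric projection onto the non-empty closed interval $T_{k+1}$.

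The key observation is that $\hat P_{k+1}$ is monotone nondecreasing on $\R$: projection onto a closed interval $[a,b]$ (with $a\le b$, possibly $a=-\infty$ or $b=+\infty$) is given by $t\mapsto\min\{b,\max\{a,t\}\}$, which is a composition of the nondecreasing maps $t\mapsto\max\{a,t\}$ and $t\mapsto\min\{b,t\}$, hence nondecreasing. First I would note $r_j\le|r_j|$, which is trivially true for every real number. Applying monotonicity of $\hat P_{k+1}$ then yields $\gamma_j=\hat P_{k+1}(r_j)\le\hat P_{k+1}(|r_j|)=\hat\gamma_j$, which is exactly the claim for indices $j$ with $s_k^T e_j\ne 0$. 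Combining the two cases gives $\gamma_j\le\hat\gamma_j$ for all $j\in\CI$.

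There is essentially no obstacle here; the statement is, as the authors say, obvious, and the only thing to be careful about is spelling out that the projection $\hat P_{k+1}$ onto an interval is monotone (so that $r_j\le|r_j|$ survives the projection) and that the degenerate case $s_k^T e_j=0$ is covered by the explicit choice built into the hypothesis. If one wanted to be slightly more self-contained one could also remark that the formula $\hat P_{k+1}(t)=\min\{\sup T_{k+1},\max\{\inf T_{k+1},t\}\}$ is exactly the structure of the map $P_{k+1}$ introduced right after Algorithm~\ref{alg_ROSE}, so the monotonicity claim is consistent with notation already in use. No auxiliary lemmas or case analysis beyond the two cases above are required.
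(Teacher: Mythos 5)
Your proof is correct and is exactly the argument the paper has in mind: the paper states this lemma without proof, calling it obvious, and your entrywise argument via monotonicity of the projection onto the closed interval $T_{k+1}$ together with $r_j\leq|r_j|$ (plus the hypothesis covering $s_k^T e_j=0$) is the natural way to spell it out.
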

	
	\paragraph{Relationship to scaled identity}
	
	For $D_{k+1}=\tau_{k+1} I$, the following three scaling factors are particularly interesting, and they also play a role in this paper. 
	
	\begin{definition}\label{def_tau_B}
		For $(s_k,z_k)\in\CX\times\CX$ with $s_k\neq 0$ let $\rho_k:=z_k^T s_k$ and define 
		\begin{equation*}
			\tau_{k+1}^\Bp:=\frac{\rho_k}{\norm{s_k}^{2}},\qquad
			\tau_{k+1}^\GM:=\frac{\norm{z_k}}{\norm{s_k}},\qquad
			\tau_{k+1}^\Bz:=\frac{\norm{z_k}^2}{\rho_k}, 
		\end{equation*}
		where $\tau_{k+1}^\Bz$ is only defined if $\rho_k\neq 0$.
	\end{definition}
	We remark that $\tau_{k+1}^\Bp$ and $\tau_{k+1}^\Bz$ are the so-called \BB~scaling factors introduced by Barzilai and Borwein in their seminal work \cite{BB88}.
	
	It is easy to check that for $\rho_k>0$ the scaling factors 
	$\tau_{k+1}^\Bp$, $\tau_{k+1}^\GM$ and $\tau_{k+1}^\Bz$ are the minimizers of the least squares problems 
	$\norm{\tau s_k-z_k}$, $\norm{\sqrt{\tau}s_k-z_k/\sqrt{\tau}}$ and $\norm{s_k-z_k/\tau}$, respectively. 
	Correspondingly, there holds $0<\tau_{k+1}^\Bp\leq\tau_{k+1}^\GM\leq\tau_{k+1}^\Bz$ if $\rho_k>0$. 
	Note that the least squares problems are identical to those outlined for \cref{eq_Bk0Secant} if $D_{k+1}=\tau_{k+1} I$. 
	Thus, the coefficients $(\gamma_j)$ from \cref{eq_defgamBp} and \cref{eq_defgamBg} as well as 
	those introduced in \cref{def_tau_B} all realize optimal least squares fits of $s_k$ to $z_k$, but each in a different sense.
	In doing so, they all approximately realize	the secant equation $D_{k+1}s_k=z_k$. 
	The secant equation plays a fundamental role in quasi-Newton methods \cite{DS79,NW06}.
	
	It follows that the choices \cref{eq_defgamBp} and \cref{eq_defgamBg} considered for $(\gamma_j)$ in this paper
	correspond to $\tau_{k+1}^\Bp$ and $\tau_{k+1}^\GM$ in the scalar setting. 
	We emphasize that for structured \LBFGS, $\tau_{k+1}^\GM$ has emerged as the most effective choice in the scalar setting among $\tau_{k+1}^\Bp$, $\tau_{k+1}^\GM$ and $\tau_{k+1}^\Bz$, cf., e.g., \cite{JBES04,KR13,AMM24}. 
	
	Next we observe that the scalars $\tau_{k+1}^\Bp$, $\tau_{k+1}^\GM$ and $\tau_{k+1}^\Bz$ provide inner approximations for the spectrum of the \emph{average Hessian} $\overline{\nabla^2\CD_k}$. For convenience we state this only for quadratic $\CS$. The proof is similar to \cite[Section~4.1]{Gilbert1989}, hence omitted. 
	As an obvious consequence, we conclude that the spectrum of $D_{k+1}$ approximates from within the spectrum of the average Hessian 
	if we additionally impose $\tau_{k+1}^\Bp$ as lower and $\tau_{k+1}^\Bz$ as upper bound for $T_{k+1}$. 
	In \Cref{sec:experiments} we study the effect of imposing these bounds in the numerical experiments. 
		
	\begin{lemma}\label{lem_spectrumavHessDk}
		Let $\CD:\CX\rightarrow\R$ be twice continuously differentiable and $\CS:\CX\rightarrow\R$ be quadratic with Hessian $S\in\Lin(\CX)$. 
		Let $x_k,x_{k+1}\in\CX$ be such that $z_k^T s_k>0$, where
		$y_k:=\nabla\CJ(x_{k+1})-\nabla\CJ(x_k)$, $s_k:=x_{k+1}-x_k$ and $z_{k}:=y_{k}-S_{k+1}s_k$ with $S_{k+1}:=S$. 
		Let 
		\begin{equation*}
			\overline{\nabla^2\CD_k}:=\int_0^1 \nabla^2\CD(x_k+t s_k)\,\mathrm{d}t 
		\end{equation*}
		be positive semi-definite. 
		Then the scalars from \cref{def_tau_B} satisfy
		\begin{equation*}
			\lambda(\overline{\nabla^2\CD_k})
			\leq\tau_{k+1}^\Bp\leq\tau_{k+1}^\GM\leq\tau_{k+1}^\Bz
			\leq \Lambda(\overline{\nabla^2\CD_k}).
		\end{equation*}
		If, in addition, $(e_j)_{j\in\CI}\subset\CX$ is an orthonormal basis of $\CX$ and 
		we project onto $\hat T_{k+1}:=[\tau_{k+1}^\Bp,\tau_{k+1}^\Bz]$ instead of $T_{k+1}$ 
		in the formulas \cref{eq_defgamBp} or \cref{eq_defgamBg}, 
		then the diagonal operator $D_{k+1}:=\sum_{j\in\CI}\gamma_j e_j e_j^T$ with 
		$(\gamma_j)_{j\in\CI}$ according to \cref{eq_defgamBp} or \cref{eq_defgamBg} satisfies 
		\begin{equation*}
			\lambda(\overline{\nabla^2\CD_k})
			\leq\lambda(D_{k+1})\leq\Lambda(D_{k+1})\leq \Lambda(\overline{\nabla^2\CD_k}).
		\end{equation*}
	\end{lemma}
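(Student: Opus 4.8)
The plan is to first eliminate $\CS$ from $z_k$, identifying $z_k$ with the action of the average Hessian $\overline{\nabla^2\CD_k}$ on $s_k$, and then to read off every inequality from elementary Rayleigh-quotient estimates. For the first step I would use that, $\CS$ being quadratic with Hessian $S$, its gradient is affine, so $\nabla\CS(x_{k+1})-\nabla\CS(x_k)=Ss_k$; since $S_{k+1}=S$ and $\CJ=\CD+\CS$, and by the fundamental theorem of calculus (applicable because $\CD$ is twice continuously differentiable), this yields
\begin{equation*}
	z_k = y_k - S s_k = \nabla\CD(x_{k+1})-\nabla\CD(x_k) = \int_0^1 \nabla^2\CD(x_k+ts_k)s_k\,\mathrm{d}t = \overline{\nabla^2\CD_k}\,s_k .
\end{equation*}
Writing $H:=\overline{\nabla^2\CD_k}$, which is symmetric and positive semi-definite by assumption, we thus obtain the key identity $z_k=Hs_k$.

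Next I would substitute $z_k=Hs_k$ into the three scalars of \cref{def_tau_B}. This turns $\tau_{k+1}^\Bp=(z_k^Ts_k)/\norm{s_k}^2$ into the Rayleigh quotient $(s_k^THs_k)/\norm{s_k}^2$ of $H$; it turns $\tau_{k+1}^\GM=\norm{z_k}/\norm{s_k}$ into $\norm{H(s_k/\norm{s_k})}$; and, on setting $w:=H^{1/2}s_k$, which is nonzero because $\norm{w}^2=s_k^THs_k=z_k^Ts_k>0$, it turns $\tau_{k+1}^\Bz=\norm{z_k}^2/(z_k^Ts_k)$ into the Rayleigh quotient $(w^THw)/\norm{w}^2$ of $H$. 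Since $\lam(H)\leq v^THv/\norm{v}^2\leq\Lam(H)$ and $\lam(H)\leq\norm{Hv}/\norm{v}\leq\Lam(H)$ for every $v\neq 0$, by the characterizations of $\lam(\cdot)$ and $\Lam(\cdot)$ for symmetric positive semi-definite operators recalled in the notation section, each of $\tau_{k+1}^\Bp$, $\tau_{k+1}^\GM$, $\tau_{k+1}^\Bz$ lies in $[\lam(H),\Lam(H)]$. Combining the outer bounds $\lam(H)\leq\tau_{k+1}^\Bp$ and $\tau_{k+1}^\Bz\leq\Lam(H)$ with the ordering $0<\tau_{k+1}^\Bp\leq\tau_{k+1}^\GM\leq\tau_{k+1}^\Bz$ established in the discussion following \cref{def_tau_B} proves the first displayed chain of inequalities.

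For the second claim I would note that $\hat T_{k+1}=[\tau_{k+1}^\Bp,\tau_{k+1}^\Bz]\subseteq[\lam(H),\Lam(H)]$ by the first part. For either formula \cref{eq_defgamBp} or \cref{eq_defgamBg}, every coefficient $\gamma_j$ is by construction an element of $\hat T_{k+1}$: when $s_k^Te_j\neq 0$ it is the image under the projection $\hat P_{k+1}$ onto $\hat T_{k+1}$, and when $s_k^Te_j=0$ it is chosen in $\hat T_{k+1}$. Since $D_{k+1}=\sum_{j\in\CI}\gamma_j e_j e_j^T$ is diagonal with respect to the orthonormal basis $(e_j)_{j\in\CI}$, we have $\lam(D_{k+1})=\inf_{j\in\CI}\gamma_j$ and $\Lam(D_{k+1})=\sup_{j\in\CI}\gamma_j$, whence $\lam(H)\leq\tau_{k+1}^\Bp\leq\lam(D_{k+1})\leq\Lam(D_{k+1})\leq\tau_{k+1}^\Bz\leq\Lam(H)$, which is the second displayed chain.

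The only step with genuine content is the identity $z_k=\overline{\nabla^2\CD_k}\,s_k$ in the first paragraph: it is precisely the quadratic structure of $\CS$ that makes $\nabla\CS(x_{k+1})-\nabla\CS(x_k)-Ss_k$ vanish and thus lets $z_k$ capture the average Hessian of $\CD$ \emph{exactly}. For non-quadratic $\CS$ this term would be merely small, degrading the inclusions to approximate ones, which is why the lemma is restricted to quadratic $\CS$, in line with the argument in \cite[Section~4.1]{Gilbert1989}. Everything after that identity is routine Rayleigh-quotient bookkeeping.
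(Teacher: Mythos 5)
Your proof is correct, and it follows exactly the route the paper has in mind: the paper omits its own proof with a pointer to \cite[Section~4.1]{Gilbert1989}, whose argument is precisely your reduction $z_k=\overline{\nabla^2\CD_k}\,s_k$ (using that the quadratic $\CS$ drops out) followed by reading off $\tau_{k+1}^\Bp$, $\tau_{k+1}^\GM$, $\tau_{k+1}^\Bz$ as Rayleigh-quotient-type expressions of the average Hessian, with the second chain then following from $\gamma_j\in\hat T_{k+1}$ and $\lam(D_{k+1})=\inf_j\gamma_j$, $\Lam(D_{k+1})=\sup_j\gamma_j$. In short, you have correctly supplied the omitted proof along the paper's intended lines, including the only nontrivial step (the identity $z_k=\overline{\nabla^2\CD_k}\,s_k$ and the $H^{1/2}$ trick for $\tau_{k+1}^\Bz$).
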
	
		
	\subsection{Cautious updating}\label{sec_cautiousupdating}
	Algorithm~\ref{alg_ROSE} uses \emph{cautious updating} \cite{LiFu01} both for the decision whether $(s_k,y_k)$ is stored and for the choice of the seed matrix $B_{k+1}^{(0)}$, the latter through requiring $\lambda(D_{k+1}),\Lambda(D_{k+1})\in T_{k+1}$ in \cref{line_choiceofDkp1},
	which effectively safeguards $\norm{D_{k+1}}$ and $\norm{D_{k+1}^{-1}}$ from becoming too small or too large in relation to $\nabla\CJ(x_{k+1})$. 
	Combined, these two techniques yield sufficient control over the condition number of $B_k$ to prove, without convexity assumptions on $\CJ$, that 
	$\lim_{k\to\infty}\nabla\CJ(x_{k})=0$, cf. \cref{thm_globconv}, and that $(\CJ(x_k))$ converges q-linearly, cf. \cref{thm_rateofconvPL,thm_rateofconvstrongminimizer}. 
	It is important to note that for $\nabla\CJ(x_k)\to 0$ the lower bound $\omega_{k+1}^l$ and the upper bound $\omega_{k+1}^u$ that appear in the definition of $T_{k+1}$ satisfy $\omega_{k+1}^l\to 0$ and $\omega_{k+1}^u\to\infty$, respectively. Asymptotically, this allows $D_{k+1}$ to have arbitrarily small positive eigenvalues and arbitrarily large positive eigenvalues. This is more flexible than safeguarding with a fixed positive number which would artificially restrict the spectrum of $B_{k+1}$. 
	Cautious updating has previously been used in \LBFGS, for instance in \cite{BT20,BJRT22,LWH22,AMM24}.
	Except for our own work \cite{AMM24}, however, the cautious updating that we use in the present paper differs from that in the aforementioned references.
	Most importantly, only \cite{AMM24} proves linear convergence in a non-convex setting, and it is also the only contribution that considers cautious updating for a  \emph{structured} \LBFGS~method.

\subsection{The line search}

The step length $\alpha_k$ in classical \LBFGS~is often computed in such a way that it satisfies the weak or the strong Wolfe--Powell conditions. 
However, some authors determine $\alpha_k$ by backtracking until the Armijo condition holds. 
For \emph{structured} \LBFGS, the authors of \cite{ACG18,AMM24} observed in numerical experiments that Armijo is preferable, so we have good reason to include all these line searches.
For later reference let us make the line searches explicit. 
For Armijo with backtracking we fix constants $\beta,\sigma\in(0,1)$. The step size $\alpha_k>0$ for the iterate $x_k$ with associated descent direction~$d_k$ is obtained by successively trying for $\alpha_k$ the numbers $1,\beta,\beta^2,\beta^3,\ldots$ and accepting the first one for which $x_{k+1}=x_k+\alpha_k d_k$ satisfies
\begin{equation}\label{eq_armijocond}
	\CJ(x_{k+1}) \leq \CJ(x_k) + \alpha_k\sigma\nabla\CJ(x_k)^T d_k.
\end{equation}
For the Wolfe--Powell conditions, respectively, the strong Wolfe--Powell conditions we additionally fix $\eta\in(\sigma,1)$. 
A step size $\alpha_k>0$ is accepted if it satisfies \cref{eq_armijocond} and
\begin{equation}\label{eq_WolfePowellcond}
	\nabla\CJ(x_{k+1})^T d_k \geq \eta\nabla\CJ(x_k)^T d_k, \qquad\text{respectively,}\qquad
	\left\lvert\nabla\CJ(x_{k+1})^T d_k\right\rvert \leq \eta\left\lvert\nabla\CJ(x_k)^T d_k\right\rvert.
\end{equation}
As is common practice when working with the Wolfe--Powell conditions (weak or strong), the first trial step size for $\alpha_k$ is always the full step $\alpha_k=1$. 

\section{Convergence results}\label{sec_convana}
	
	This section presents convergence results for Algorithm~\ref{alg_ROSE}. 
	Specifically, global convergence is addressed in \cref{sec_globconv}, linear convergence in \cref{sec_linconv}, and finite convergence on suitable quadratics in \Cref{sec_finconv}.
	
	As it turns out, the convergence analysis developed in \cite{AMM24} for \AlgTu~can be generalized to cover Algorithm~\ref{alg_ROSE}.
	By doing so we essentially obtain the same convergence results for \ref{alg_ROSE} in \Cref{sec_globconv,sec_linconv} as for \Tu~in \cite{AMM24}. 
	Since the changes required in the proofs are straightforward, we only spell them out for the proof of \Cref{thm_globconv}~1).
	To distinguish the convergence properties of \Ro~from those of \Tu, we show in \Cref{sec_finconv} that for a particular class of functions, \Ro~converges after finitely many iterations, whereas \Tu~does not. 
	
	\subsection{Global convergence of Algorithm~\ref{alg_ROSE}}\label{sec_globconv}
		
	For $x_0$ in Algorithm~\ref{alg_ROSE} we define the level set, respectively, the extended level set by
	\begin{equation*}
		\Omega:=\Bigl\{x\in\CX: \; \CJ(x)\leq\CJ(x_0)\Bigr\}\qquad\text{ and }\qquad
		\Omega_\delta:=\Omega+\Ballop{\delta}(0), \enspace\text{ where } \delta>0.
	\end{equation*}
	
	The global convergence of Algorithm~\ref{alg_ROSE} holds under the following assumption. 
	
	\begin{assumption}\label{ass_globconv}	
		\phantom{to create linebreak}
		\begin{enumerate}
			\item[1)] The objective $\CJ:\CX\rightarrow\R$ is continuously differentiable and bounded below.
			\item[2)] The gradient of $\CJ$ is Lipschitz continuous in $\Omega$ with constant $L>0$, i.e., there holds
			$\norm{\nabla\CJ(x)-\nabla\CJ(\hat x)}\leq L\norm{x-\hat x}$ for all $x,\hat x\in\Omega$.
			\item[3)] The sequence $(\norm{S_k})$ in Algorithm~\ref{alg_ROSE} is bounded. 
			\item[4)] The step size $\alpha_k$ is, for all $k$, computed by Armijo with backtracking \cref{eq_armijocond} or according to the Wolfe--Powell conditions \cref{eq_WolfePowellcond}.
			In the first case, we suppose in addition that there is $\delta>0$ such that $\CJ$ or $\nabla\CJ$ is uniformly continuous in $\Omega_\delta$. 
			\item[5)] The value $c_0=0$ is only chosen in Algorithm~\ref{alg_ROSE} if 
			 with this choice there holds $\sup_k\,\norm{(B_k^{(0)})^{-1}}<\infty$
			 (which is, for instance, the case if $(\norm{S_k^{-1}})$ is bounded). 
			\item[6)] The value $C_0=\infty$ is only chosen in Algorithm~\ref{alg_ROSE} if any of the following holds:
			\begin{itemize}
				\item \Cref{line_choicetauk1} is replaced by ``Let $T_{k+1}:=[\omega_{k+1}^l,P_{k+1}(\tau_{k+1}^\GM)]$''.
				\item $\CJ$ is twice continuously differentiable, $\overline{G_k}:=\overline{\nabla^2\CJ_k}-S_k$ is symmetric positive semi-definite for all $k$, and 
				$(\norm{\overline{G_k}})$ is bounded.
				For quadratic $\CS$ and $S_k=\nabla^2\CS(x_k)$ for all $k$, we can also replace
				$\overline{G_k}$ in the preceding sentence by $\overline{\nabla^2\CD_k}:=\int_0^1 \nabla^2\CD(x_k+t s_k)\,\mathrm{d}t$.
			\end{itemize}
		\end{enumerate}		
	\end{assumption}
	
	\begin{remark}
		Parts 1)--4) of \Cref{ass_globconv} are more general than the assumptions that are typically used in the literature to analyze the convergence of \LBFGS. For instance, it is often assumed that the objective $\CJ$ is twice continuously differentiable with bounded level sets and only the Wolfe--Powell line search is discussed. 
		Let us point out two benefits of our more general setting to illustrate its usefulness. First, 
		in our numerical experience with image registration problems, Armijo with backtracking is often more effective than Wolfe--Powell, and in particular we use it in the numerical experiments of this paper. On the other hand, Wolfe--Powell is the standard line search for \LBFGS, so it is important to include it, too.
		Second, our weaker differentiability requirements cover the situation that $\CJ$ contains a penalty term, e.g. the classical quadratic penalty $\sum_i\max\{0,g_i(x)\}^2$ (here, the $g_i$ stem from inequality constraints $g_i(x)\leq 0$). Since penalty terms are frequently used in image registration \cite[Chapter~15]{KT19}, allowing penalty terms in the objective is a desirable feature.
	
		Next we comment on some aspects of 3), 5) and 6). 
		The sequence $(\norm{S_k})$ is for instance bounded if we select $S_k=\nabla^2\CS(x_k)$ for all $k$, $(x_k)$ is bounded and $\nabla^2\CS$ is Hölder continuous in $\Omega$. The sequence $(\norm{(B_k^{(0)})^{-1}})$ is for instance bounded if
		we select $S_k=\nabla^2\CS(x_k)$ for all $k$ and the regularizer $\CS$ is strongly convex.
		We stress, however, that the boundedness of $(\norm{(B_k^{(0)})^{-1}})$ is only required if we want to choose $c_0=0$ in \ref{alg_ROSE}. Yet, all convergence results hold for $c_0>0$ and the numerical results in \Cref{sec:experiments} are obtained with $c_0=10^{-6}$ and include non-convex regularizers for which a positive semi-definite approximation of the Hessian is available for $S_k$. 
		Note that $c_0=0$ implies $\omega_{k+1}^l=0$ for all $k$, while $C_0=\infty$ implies $\omega_{k+1}^u=\infty$ for all $k$. That is, for $c_0=0$, respectively, $C_0=\infty$, the lower safeguard is zero, resp., the upper safeguard is irrelevant, just as in standard \LBFGS.
		Observe in this context that if $\CS\equiv 0$ and $\CD$ is a strongly convex $C^2$ function with Lipschitz continuous gradient in $\Omega$, then \cref{ass_globconv} holds with $c_0=0$ and $C_0=\infty$, hence we can use $S_k=0$ for all $k$. In this case we recover classical~\LBFGS~if $c_s$ is smaller than the modulus of convexity of $\CD$.
	\end{remark}
	
	We now state the global convergence of \ref{alg_ROSE} in the sense $\lim_{k\to\infty}\,\norm{\nabla\CJ(x_k)} = 0$, without convexity of the objective.
	For \LBFGS-type methods, this strong form of global convergence has rarely been shown in the literature in non-convex settings, cf. the discussion in \cite{AMM24}.
		
	\begin{theorem}\label{thm_globconv}
		Let \cref{ass_globconv} hold. Then:
		\begin{enumerate}
			\item[1)] Algorithm~\ref{alg_ROSE} is well-defined. 
			\item[2)] If Algorithm~\ref{alg_ROSE} is applied with $\tl=0$, then it either terminates after finitely many iterations 
			with an $x_k$ that satisfies $\nabla\CJ(x_k)=0$ or it generates a sequence $(x_k)$ such that 
			$(\CJ(x_k))$ is strictly monotonically decreasing and convergent and there holds 
			\begin{equation*}
				\lim_{k\to\infty}\,\norm{\nabla\CJ(x_k)} = 0.
			\end{equation*}
			In particular, every cluster point of $(x_k)$ is stationary.
			\item[3)] If Algorithm~\ref{alg_ROSE} is applied with $\tl>0$, then it terminates after finitely many iterations
			with an $x_k$ that satisfies $\norm{\nabla\CJ(x_k)}\leq\tl$. 
		\end{enumerate}
	\end{theorem}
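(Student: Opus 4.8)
The plan is to prove part~1) by walking through Algorithm~\ref{alg_ROSE} line by line and checking that each step is executable, and to obtain parts~2) and~3) by transferring the convergence analysis of \Tu~from \cite{AMM24}, whose arguments use the seed matrix only through spectral bounds that continue to hold for \ref{alg_ROSE}.

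For well-definedness, the decisive point is that the operator $B_k$ passed to the two-loop recursion in \cref{line_tlr} is symmetric positive definite, so that $d_k=-B_k^{-1}\nabla\CJ(x_k)$ exists and is a descent direction. First I would show that the seed matrix $B_k^{(0)}=D_k+S_k$ is positive definite: $S_k\in\Lin_{\geq 0}(\CX)$ by construction, $D_0=\tau I$ with $\tau>0$, and for $k\geq 1$ we have $\lam(D_k)\geq\omega_k^l=\min\{c_0,c_1\norm{\nabla\CJ(x_k)}^{c_2}\}$ by \cref{line_choiceofDkp1}, which is strictly positive whenever $c_0>0$ and $\nabla\CJ(x_k)\neq 0$; the case $c_0=0$ is covered by part~5) of \cref{ass_globconv}, which forces $\sup_k\norm{(B_k^{(0)})^{-1}}<\infty$ and hence invertibility of the positive semi-definite operator $B_k^{(0)}$. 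Next, every pair $(s_j,y_j)$ present in storage satisfies $y_j^Ts_j>c_s\norm{s_j}^2\geq 0$ by the acceptance rule in \cref{line_acceptanceofysforstorage} (in particular $s_j\neq 0$), so the classical argument that the BFGS update preserves positive definiteness under the curvature condition shows that $B_k=B_k^{(\ell)}$ is symmetric positive definite and the two-loop recursion runs without division by zero. Because the termination test in \cref{line_termination} has not triggered, $\nabla\CJ(x_k)\neq 0$ for $k\geq 1$ (and if $\nabla\CJ(x_0)=0$ the algorithm stops after one iteration with $x_1=x_0$, consistent with the statement), hence $\nabla\CJ(x_k)^Td_k=-\nabla\CJ(x_k)^TB_k^{-1}\nabla\CJ(x_k)<0$, i.e.\ $d_k$ is a descent direction. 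Consequently the Armijo backtracking terminates after finitely many steps, and so does the Wolfe--Powell line search, using that $\CJ$ is bounded below by \cref{ass_globconv}~1); in either case the accepted step satisfies \cref{eq_armijocond}, so $\CJ(x_{k+1})<\CJ(x_k)$ and all iterates stay in $\Omega$.

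It remains to check the lines after the line search. From $\nabla\CJ(x_k)\neq 0$, $d_k\neq 0$ and $\alpha_k>0$ we get $s_k\neq 0$ whenever \cref{line_choicetauk1} is reached, so $\tau_{k+1}^\GM=\norm{z_k}/\norm{s_k}$ of \cref{def_tau_B} is well-defined and finite, and hence so is $P_{k+1}(\tau_{k+1}^\GM)$. The interval $T_{k+1}$ is non-empty: since $C_0\in[c_0,\infty]$ we have $\omega_{k+1}^l\leq c_0\leq C_0\leq\omega_{k+1}^u$, and in the branch $T_{k+1}=[\omega_{k+1}^l,P_{k+1}(\tau_{k+1}^\GM)]$ the projection onto $[\omega_{k+1}^l,\omega_{k+1}^u]$ gives $P_{k+1}(\tau_{k+1}^\GM)\geq\omega_{k+1}^l$. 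Thus a symmetric positive semi-definite $D_{k+1}$ with $\lam(D_{k+1}),\Lam(D_{k+1})\in T_{k+1}$ exists (e.g.\ $D_{k+1}=\omega_{k+1}^l I$, or the diagonal operators from \cref{eq_defgamBp} or \cref{eq_defgamBg}), so \cref{line_choiceofDkp1} can be carried out, and part~1) follows.

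For 2) and 3) the key observation is that the proofs in \cite{AMM24} exploit the form $D_k=\tau_k I$ only via the spectral estimates $\omega_k^l\leq\lam(D_k)\leq\Lam(D_k)\leq\omega_k^u$, which hold in \ref{alg_ROSE} by \cref{line_choiceofDkp1} together with $T_{k+1}\subseteq[\omega_{k+1}^l,\omega_{k+1}^u]$; combined with the boundedness of $(\norm{S_k})$ from \cref{ass_globconv}~3) this yields the same control on $\norm{B_k^{(0)}}$ and $\norm{(B_k^{(0)})^{-1}}$ as in the \Tu~setting, while the cautious rule in \cref{line_acceptanceofysforstorage} and the Lipschitz bound $\norm{y_j}\leq L\norm{s_j}$ (valid since the iterates stay in $\Omega$) bound the \LBFGS~updates exactly as before. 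From there one follows \cite{AMM24}: a Zoutendijk-type inequality (from the line search and \cref{ass_globconv}~2),~4)) gives $\sum_k\cos^2\theta_k\,\norm{\nabla\CJ(x_k)}^2<\infty$, where $\theta_k$ is the angle between $d_k$ and $-\nabla\CJ(x_k)$; the condition number of $B_k$ grows at most polynomially in $\norm{\nabla\CJ(x_k)}^{-1}$ because $\omega_k^u/\omega_k^l$ does, so $\cos\theta_k$ cannot decay too fast; and a subsequence argument then excludes $\limsup_k\norm{\nabla\CJ(x_k)}>0$, giving $\lim_k\norm{\nabla\CJ(x_k)}=0$ in the non-terminating case (with $(\CJ(x_k))$ strictly decreasing and convergent), and, for $\tl>0$, finite termination. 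I expect the only genuine work to be the bookkeeping in part~1) --- in particular the $c_0=0$ case via \cref{ass_globconv}~5) and the degenerate iteration with $\nabla\CJ(x_0)=0$ --- since, as the authors note, the transfer of the convergence arguments is routine once the spectral bounds on $D_k$ are in place.
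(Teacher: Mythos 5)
Your proposal is correct and follows essentially the same route as the paper: part~1) is verified by exactly the two checks the paper highlights (positive definiteness of $B_k^{(0)}=D_k+S_k$, split into the cases $c_0>0$ via $\lam(D_k)\geq\omega_k^l>0$ and $c_0=0$ via \cref{ass_globconv}~5), plus the obvious non-emptiness of $T_{k+1}$), with the BFGS updates preserving definiteness under the stored curvature condition, and parts~2)--3) are obtained by transferring the analysis of \cite{AMM24}, which is precisely what the paper does. Your additional sketch of the Zoutendijk-type argument and the degenerate case $\nabla\CJ(x_0)=0$ is consistent with that transferred analysis and does not change the approach.
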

	
	\begin{proof}
		The claim 1) can be established similarly as in \cite[Lemma~4.3]{AMM24}, while 2) follows as in \cite[Theorem~4.8]{AMM24}
		and 3) is an obvious consequence of 2). 
		
		Let us spell out the two changes required in the proof of \cite[Lemma~4.3]{AMM24} to obtain 1).
		First, the intervals $[\tau^z,\tau^s]$ and $[\tau^z,\tau^g]$ that appear in the proof in
		\cite{AMM24} have to be replaced by $T_{k+1}$. 
		It is then argued in the proof that these intervals are non-empty, but this is obvious for $T_{k+1}$.
		Second, it is used that $B_k$ is positive definite, so we have to establish this here. 
		Lemma~4.1 from \cite{AMM24} yields that $B_k$ is positive definite if $B_k^{(0)}$ is.
		Since $B_k^{(0)}=D_k+S_k$ by \Cref{line_choiceofseed} of \ref{alg_ROSE}
		and since $D_k$ and $S_k$ are positive semi-definite by \Cref{line_choiceofDkp1} and \Cref{line_Skpossemdef}, 
		$B_k^{(0)}$ is positive semi-definite. To argue that it is actually positive definite, we have to distinguish two cases. 
		If the constant $c_0\geq 0$ in \ref{alg_ROSE} is positive, 
		we infer that unless $x_{k}$ is stationary (in which case the algorithm terminates before generating $B_k$, so there is nothing to show), there holds $\omega_{k}^l>0$. As $\omega_k^l$ is the lower bound of the interval $T_k$ (\cref{line_choicetauk1}), it follows from
		$\lambda(D_{k})\in T_{k}$ (\Cref{line_choiceofDkp1}) that $\lambda(D_k)\geq \omega_k^l>0$, so $D_{k}$ is positive definite, hence $B_k^{(0)}$ is, too.
		By \Cref{ass_globconv}~5) the choice $c_0=0$ is only made if $B_k^{(0)}$ is invertible, so 
		$B_k^{(0)}$ is positive definite in this case, too.
	\end{proof}
	
	\begin{remark}\label{rem_unifcontfindim}
		\phantom{lb}
			\begin{enumerate}
				\item[1)] If $(x_k)$ is bounded, the uniform continuity in \cref{ass_globconv}~4) can be dropped for finite dimensional~$\CX$. 
				\item[2)] Note that while \Cref{thm_globconv} states that cluster points of $(x_k)$ are necessarily stationary, it it does not ensure that cluster points exist. If $\CX$ is finite dimensional, then the boundedness of $(x_k)$ is sufficient for that existence. 
				If $\CX$ is infinite dimensional, then it is more delicate to ensure the existence of cluster points.
				However, if $(x_k)$ is bounded and $\nabla\CJ$ is weakly continuous, then the existence of \emph{weak} cluster points is guaranteed and it is easy to show that every weak cluster point is stationary.
			\end{enumerate}
	\end{remark}	
	
		\subsection{Rate of convergence of Algorithm~\ref{alg_ROSE}}\label{sec_linconv}
		
		The convergence rate of the classical \LBFGS~method is q-linear for the objective and r-linear for the iterates under \emph{global} strong convexity of $\CJ$, cf. \cite{LN89}, and sublinear for non-convex objectives \cite{BJRT22}. 
		For structured \LBFGS~we established linear convergence for non-convex objectives in \cite{AMM24}.
		A close inspection of the results from \cite{AMM24} reveals that they essentially apply to Algorithm~\ref{alg_ROSE}, too.
		This yields two results on the rate of convergence. 
		First we obtain under a \KL-type inequality, which is weaker than \emph{local} strong convexity, that the objective converges q-linearly and the iterates and their gradients converge r-linearly. Second, the same type of convergence also holds if there is a cluster point in whose neighborhood $\CJ$ is strongly convex, which is the classical sufficient optimality condition of second order.
		Both results rely on the following assumption.
		
		\begin{assumption}\label{ass_linconv}
			\item[1)] \Cref{ass_globconv} holds.
			\item[2)] Algorithm~\ref{alg_ROSE} is applied with $\tl=0$ and does not terminate finitely.
			\item[3)] The sequences $(\norm{B_k})$ and $(\norm{B_k^{-1}})$ are bounded.
			\item[4)] If the Armijo condition with backtracking is used for step size selection in Algorithm~\ref{alg_ROSE}, there is $\delta>0$ such that $\CJ$ is uniformly continuous in $\Omega_\delta$ or $\nabla\CJ$ is Lipschitz continuous in $\Omega_\delta$. 
		\end{assumption}

		\begin{remark}
			The boundedness assumption~3) is easy to satisfy in the structured setting of this paper. Specifically, it follows as in \cite{AMM24} that $(\norm{B_k})$ is bounded if at least one of the two statements in \cref{ass_globconv}~6) holds. Notably, the first of those statements only limits the size of the interval $T_{k+1}$ and does not involve convexity of the objective. 
			The boundedness of $(\norm{B_k^{-1}})$ is, for instance, guaranteed if $(S_k)$ is chosen uniformly positive definite. 
			If $\CS$ is strongly convex, this holds for $S_k=\nabla^2\CS(x_k)$, but 
			more sophisticated choices may be available for the problem at hand. 
			If a positive semi-definite approximation of $\nabla^2\CS(x_k)$ is available, we may choose it as $S_k$
			and, if necessary, add $\delta I$ with a small $\delta>0$ to ensure boundedness of $(\norm{B_k^{-1}})$.
			In any case, the data-fitting term $\CD$ in \cref{eq_SO} can clearly be non-convex.
		\end{remark}

		\subsubsection{Linear convergence under a \KL-type inequality}
		
		In this subsection we state the linear convergence of Algorithm~\ref{alg_ROSE} based on a \KL-type inequality.
		To introduce this inequality let us consider the sequence $(x_k)$ generated by Algorithm~\ref{alg_ROSE} and recall from \cref{thm_globconv}
		that $(\CJ(x_k))$ is strictly monotonically decreasing and that $\CJopt:=\lim_{k\to\infty}\CJ(x_k)$ exists. 
		We demand that there are $\bar k,\mu>0$ such that 
		\begin{equation}\label{eq_PLcond}
			\CJ(x_k)-\CJopt \leq \frac{1}{\mu}\norm{\nabla\CJ(x_k)}^2 \qquad \forall k\geq\bar k. 
		\end{equation}
		Comments on how this inequality relates to other \KL-type-inequalities can be found in \cite{AMM24}.
		It is not difficult to check that well-known \emph{error bound conditions} like the one in 
		\cite[Assumption~2]{TY09} imply \cref{eq_PLcond}. Thus, the following result holds in particular under any of those error bound conditions. 
		The significance of \cref{eq_PLcond} is that it allows for minimizers that are neither locally unique nor have a regular Hessian, while still resulting in linear convergence.  
		We recall that the parameter $\sigma$ appears in the Armijo~condition~\cref{eq_armijocond}.
				
		\begin{theorem}\label{thm_rateofconvPL}
			Let \cref{ass_linconv} and \cref{eq_PLcond} hold.	
			Then there exists $\xopt$ such that 
			\begin{enumerate}
				\item[1)] there hold $\nabla\CJ(\xopt)=0$ and $\CJopt=\CJ(\xopt)$;
				\item[2)] the iterates $(x_k)$ converge r-linearly to $\xopt$;
				\item[3)] the gradients $(\nabla\CJ(x_k))$ converge r-linearly to zero;
				\item[4)] the function values $(\CJ(x_k))$ converge q-linearly to $\CJ(\xopt)$. 
				Specifically, we have 
				\begin{equation*}
					\CJ(x_{k+1})-\CJ(\xopt)
					\leq \left(1-\frac{\sigma\alpha_k\mu}{\norm{B_k}}\right)\Bigl[\CJ(x_k)-\CJ(\xopt)\Bigr] \qquad \forall k\geq\bar k.
				\end{equation*}
				The supremum of the term in round brackets is strictly smaller than 1.
			\end{enumerate}
		\end{theorem}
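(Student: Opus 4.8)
The plan is to reproduce, for the general seed matrix $B_k^{(0)}=D_k+S_k$, the argument that \cite{AMM24} uses for \Tu, and to extract all three rates from a single one-step energy inequality. I would first record the basic descent estimate. Since $d_k=-B_k^{-1}\nabla\CJ(x_k)$ and, by \cref{thm_globconv}~1) together with Lemma~4.1 of \cite{AMM24}, $B_k$ is symmetric positive definite with $\lambda(B_k^{-1})=\Lambda(B_k)^{-1}=\norm{B_k}^{-1}$, one has $\nabla\CJ(x_k)^Td_k=-\nabla\CJ(x_k)^TB_k^{-1}\nabla\CJ(x_k)\le-\norm{B_k}^{-1}\norm{\nabla\CJ(x_k)}^2$. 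Plugging this into the Armijo decrease \cref{eq_armijocond}, which holds for both admissible line searches, gives
\begin{equation*}
	\CJ(x_k)-\CJ(x_{k+1})\ \ge\ \frac{\sigma\alpha_k}{\norm{B_k}}\,\norm{\nabla\CJ(x_k)}^2\qquad\text{for all }k,
\end{equation*}
and by \cref{ass_linconv}~2) and \cref{thm_globconv}~2) we have $\nabla\CJ(x_k)\ne 0$ and $\CJ(x_k)-\CJopt>0$ for all $k$.

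Next I would establish that the step sizes are uniformly bounded away from zero, $\alpha_{\min}:=\inf_k\alpha_k>0$ (and also $\sup_k\alpha_k<\infty$, trivially $\le 1$ for Armijo). For the Wolfe--Powell line search and for Armijo with Lipschitz $\nabla\CJ$, this follows from the standard lower bounds on line-search step sizes after inserting $\lvert\nabla\CJ(x_k)^Td_k\rvert\ge\norm{B_k}^{-1}\norm{\nabla\CJ(x_k)}^2$ and $\norm{d_k}\le\norm{B_k^{-1}}\norm{\nabla\CJ(x_k)}$; the resulting bound is uniform in $k$ because $(\norm{B_k})$ and $(\norm{B_k^{-1}})$ are bounded by \cref{ass_linconv}~3). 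For Armijo under merely uniform continuity of $\CJ$, the remaining case in \cref{ass_linconv}~4), one argues as in \cite{AMM24}.

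With these two ingredients the rates follow by a short chain of estimates. For $k\ge\bar k$ the \KL-type inequality \cref{eq_PLcond} reads $\norm{\nabla\CJ(x_k)}^2\ge\mu\bigl(\CJ(x_k)-\CJopt\bigr)$, so the basic estimate becomes $\CJ(x_{k+1})-\CJopt\le q_k\bigl(\CJ(x_k)-\CJopt\bigr)$ with $q_k:=1-\sigma\alpha_k\mu/\norm{B_k}$, and $q:=\sup_{k\ge\bar k}q_k<1$ by the previous paragraph and the boundedness of $(\norm{B_k})$. Iterating gives $\CJ(x_k)-\CJopt\le q^{\,k-\bar k}\bigl(\CJ(x_{\bar k})-\CJopt\bigr)$, the q-linear decay in 4); feeding this back into the basic estimate and using $\alpha_k\ge\alpha_{\min}$ yields $\norm{\nabla\CJ(x_k)}\le C_1 q^{k/2}$, the r-linear gradient decay in 3). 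Then $\norm{x_{k+1}-x_k}=\alpha_k\norm{d_k}\le\alpha_k\norm{B_k^{-1}}\,\norm{\nabla\CJ(x_k)}\le C_2 q^{k/2}$, so $(x_k)$ is Cauchy; since $\CX$ is complete it converges to some $\xopt$ with $\norm{x_k-\xopt}\le\sum_{j\ge k}\norm{x_{j+1}-x_j}\le C_2 q^{k/2}/(1-q^{1/2})$, which is 2). Continuity of $\nabla\CJ$ and $\CJ$ gives $\nabla\CJ(\xopt)=\lim_k\nabla\CJ(x_k)=0$ and $\CJ(\xopt)=\lim_k\CJ(x_k)=\CJopt$, i.e. 1), and substituting $\CJopt=\CJ(\xopt)$ into the recursion for $q_k$ produces the displayed estimate in 4).

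The only genuinely non-routine point is the uniform lower bound $\inf_k\alpha_k>0$ under the weak regularity of \cref{ass_linconv}~4), in particular the Armijo case with only uniformly continuous $\CJ$; this is exactly the delicate step already handled for \Tu~in \cite{AMM24}, and it carries over with only notational changes because it uses nothing about $B_k^{(0)}$ beyond the Armijo decrease and the boundedness of $(\norm{B_k})$ and $(\norm{B_k^{-1}})$, neither of which is affected by replacing $\tau_k I$ with the general $D_k$. Everything else reduces to operator-norm bookkeeping and the completeness of $\CX$.
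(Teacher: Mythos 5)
Your proposal is correct and follows essentially the same route as the paper, which simply invokes the proof of Theorem~4.7 in \cite{AMM24}: the Armijo decrease combined with $\nabla\CJ(x_k)^Td_k\le-\norm{\nabla\CJ(x_k)}^2/\norm{B_k}$, the \KL-type inequality \cref{eq_PLcond}, uniform bounds on $\alpha_k$, $\norm{B_k}$ and $\norm{B_k^{-1}}$, and a geometric-series/Cauchy argument in the complete space $\CX$ for the iterates. The only step you assert rather than justify is $\sup_k\alpha_k<\infty$ in the Wolfe--Powell case, but this follows in one line from the Armijo inequality together with \cref{eq_PLcond} (for $k\ge\bar k$ they give $\alpha_k\le\norm{B_k}/(\sigma\mu)$), so this is not a real gap.
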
	
			
		\begin{proof}
			Identical to the proof of \cite[Theorem~4.7]{AMM24}.
	\end{proof}
	
	\begin{remark}\label{rem_boundednessBs}
		As in \cref{rem_unifcontfindim} the statements concerning $\Omega_\delta$ in \cref{ass_globconv} and in \Cref{ass_linconv} 
		can be dropped if $\CX$ is finite dimensional and $(x_k)$ is bounded. 
	\end{remark}	
	
	\subsubsection{Linear convergence under local strong convexity}	
	
	We now derive linear convergence under a different set of assumptions than in \cref{thm_rateofconvPL}. 
	For the special case $\CS\equiv 0$ and $S_k=0$ for all $k$, the following result may be viewed as an improved version of the classical convergence result \cite[Thm.~7.1]{LN89} from Liu and Nocedal on \LBFGS, the most notable improvement being that strong convexity is required only locally. 
		
	\begin{theorem}\label{thm_rateofconvstrongminimizer}
		Let \cref{ass_linconv} hold except for the statements concerning $\Omega_\delta$.
		Let $(x_k)$ have a cluster point $\xopt$ such that $\CJ\vert_{\CN}$ is $\mu$-strongly convex, where $\CN\subset\Omega$ is a convex neighborhood of $\xopt$.
		Then 
		\begin{enumerate}
			\item[1)] there holds $\CJ(\xopt) + \mu\norm{x-\xopt}^2 \leq\CJ(x)$ for all $x\in\CN$;
			\item[2)] the iterates $(x_k)$ converge r-linearly to $\xopt$;
			\item[3)] the gradients $(\nabla\CJ(x_k))$ converge r-linearly to zero;
			\item[4)] the function values $(\CJ(x_k))$ converge q-linearly to $\CJ(\xopt)$. Specifically, if 
			$\bar k$ is such that $x_k\in\CN$ for all $k\geq\bar k$, then we have
			\begin{equation}\label{eq_finalineqinproof2}
				\CJ(x_{k+1})-\CJ(\xopt)
				\leq \left(1-\frac{2\sigma\alpha_k \mu}{\norm{B_k}}\right)\Bigl[\CJ(x_k)-\CJ(\xopt)\Bigr] \qquad \forall k\geq\bar k.
			\end{equation}
			The supremum of the term in round brackets is strictly smaller than 1.
		\end{enumerate}
	\end{theorem}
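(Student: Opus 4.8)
The plan is to reduce \cref{thm_rateofconvstrongminimizer} to \cref{thm_rateofconvPL} by trapping the iterates inside $\CN$ and verifying that the \KL-type inequality \cref{eq_PLcond} holds on $\CN$ with $\mu$ replaced by $2\mu$. First I would record three preliminary facts. (i) $(\CJ(x_k))$ is monotonically decreasing --- this is enforced by the sufficient-decrease condition \cref{eq_armijocond}, which is part of both admissible line searches --- and bounded below, so $\CJ(x_k)\downarrow\CJopt$ for some $\CJopt\in\R$; since $\xopt$ is a cluster point and $\CJ$ is continuous, $\CJopt=\CJ(\xopt)$, hence $\CJ(x_k)\geq\CJ(\xopt)$ for all $k$. (ii) Since $B_k$ is positive definite (as in the proof of \cref{thm_globconv}), combining \cref{eq_armijocond} with $s_k=-\alpha_k B_k^{-1}\nabla\CJ(x_k)$ and $\lam(B_k^{-1})=\norm{B_k}^{-1}$, together with the boundedness of $(\norm{B_k})$, $(\norm{B_k^{-1}})$ and $(\alpha_k)$ (trivial for Armijo with backtracking), gives $\norm{s_k}^2\leq c\,[\CJ(x_k)-\CJ(x_{k+1})]$ for some $c>0$, whence $\sum_k\norm{s_k}^2<\infty$ and $\norm{s_k}\to 0$. (iii) A local Zoutendijk-type argument along a subsequence converging to $\xopt$ --- legitimate even though the $\Omega_\delta$ statements in \cref{ass_globconv,ass_linconv} have been dropped, because $\nabla\CJ$ is Lipschitz on $\Omega\supseteq\CN$ and the relevant line-search trial points lie near $\xopt$ --- yields $\nabla\CJ(\xopt)=0$. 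Statement~1) then follows from the first-order characterization of $\mu$-strong convexity of $\CJ\vert_\CN$ on the convex set $\CN$, evaluated at $\xopt$: $\CJ(x)\geq\CJ(\xopt)+\nabla\CJ(\xopt)^T(x-\xopt)+\mu\norm{x-\xopt}^2=\CJ(\xopt)+\mu\norm{x-\xopt}^2$ for all $x\in\CN$.

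Next I would carry out the trapping. Fix $r>0$ with $\overline{\Ballop{3r}(\xopt)}\subseteq\CN$ and choose $\bar k$ so large that $x_{\bar k}\in\Ballop{r}(\xopt)$, $\norm{s_j}\leq 2r$ for all $j\geq\bar k$, and $\CJ(x_{\bar k})-\CJ(\xopt)<\mu r^2$; this is possible since $\xopt$ is a cluster point, $\norm{s_k}\to 0$ and $\CJ(x_k)\downarrow\CJ(\xopt)$. By induction, $x_k\in\Ballop{r}(\xopt)$ for all $k\geq\bar k$: if $x_k\in\Ballop{r}(\xopt)$, then $x_{k+1}=x_k+s_k\in\Ballop{3r}(\xopt)\subseteq\CN$, so statement~1) and the monotonicity of $(\CJ(x_k))$ give $\mu\norm{x_{k+1}-\xopt}^2\leq\CJ(x_{k+1})-\CJ(\xopt)\leq\CJ(x_{\bar k})-\CJ(\xopt)<\mu r^2$, i.e.\ $x_{k+1}\in\Ballop{r}(\xopt)$. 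Hence $x_k\in\CN$ for all $k\geq\bar k$, which is the $\bar k$ referred to in the theorem.

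Now I would verify \cref{eq_PLcond} on $\CN$ with $\mu$ replaced by $2\mu$. For $x\in\CN$, the strong monotonicity of $\nabla\CJ$ on $\CN$ implied by $\mu$-strong convexity, together with $\nabla\CJ(\xopt)=0$, gives $2\mu\norm{x-\xopt}^2\leq\nabla\CJ(x)^T(x-\xopt)\leq\norm{\nabla\CJ(x)}\norm{x-\xopt}$, hence $\norm{x-\xopt}\leq\norm{\nabla\CJ(x)}/(2\mu)$; combined with the convexity estimate $\CJ(x)-\CJ(\xopt)\leq\nabla\CJ(x)^T(x-\xopt)\leq\norm{\nabla\CJ(x)}\norm{x-\xopt}$ this yields $\CJ(x)-\CJ(\xopt)\leq\norm{\nabla\CJ(x)}^2/(2\mu)$. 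Thus \cref{eq_PLcond} holds for all $k\geq\bar k$ with $2\mu$ in place of $\mu$ and with $\CJopt=\CJ(\xopt)$. From index $\bar k$ on I would then run the argument behind \cref{thm_rateofconvPL}: the sufficient decrease $\CJ(x_k)-\CJ(x_{k+1})\geq\tfrac{\sigma\alpha_k}{\norm{B_k}}\norm{\nabla\CJ(x_k)}^2$, which uses $d_k=-B_k^{-1}\nabla\CJ(x_k)$ and $\lam(B_k^{-1})=\norm{B_k}^{-1}$, combined with this local \cref{eq_PLcond} produces \cref{eq_finalineqinproof2}; the supremum of the factor there is strictly below $1$ because $(\norm{B_k})$ is bounded and $(\alpha_k)_{k\geq\bar k}$ is bounded below by the standard descent-lemma estimate, now valid since the iterates and the corresponding line-search trial points remain in a bounded neighborhood of $\xopt$ inside $\Omega$, where $\nabla\CJ$ is Lipschitz. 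Finally, the q-linear decay of $(\CJ(x_k)-\CJ(\xopt))$ together with statement~1) in the form $\norm{x_k-\xopt}\leq\sqrt{(\CJ(x_k)-\CJ(\xopt))/\mu}$ gives statement~2) --- r-linear convergence of $(x_k)$ to $\xopt$, so that $\xopt$ is the limit of the whole sequence --- and $\norm{\nabla\CJ(x_k)}=\norm{\nabla\CJ(x_k)-\nabla\CJ(\xopt)}\leq L\norm{x_k-\xopt}$ gives statement~3).

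I expect the main obstacle to be the trapping step in combination with the absence of the global $\Omega_\delta$ hypotheses: one must ensure that $\norm{s_k}\to 0$, that $\nabla\CJ(\xopt)=0$, that the line search is well-defined, and that $(\alpha_k)_{k\geq\bar k}$ is bounded below. The resolution is that, once the iterates are confined to a bounded neighborhood of $\xopt$ contained in $\Omega$, the Lipschitz continuity of $\nabla\CJ$ on $\Omega$ supplies exactly the regularity for which the $\Omega_\delta$ statements would otherwise be invoked; with that in hand the remaining estimates are routine and follow \cite{AMM24}.
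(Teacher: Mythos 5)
Your proposal is correct and follows essentially the same route as the paper's proof, which is simply delegated to \cite[Theorem~4.9]{AMM24}: show the cluster point is stationary, trap the iterates in $\CN$ using quadratic growth and $\norm{s_k}\to 0$, deduce the inequality \cref{eq_PLcond} locally with constant $2\mu$ from strong convexity, and rerun the argument behind \cref{thm_rateofconvPL}, which correctly reproduces the factor $2\sigma\alpha_k\mu/\norm{B_k}$ in \cref{eq_finalineqinproof2} as well as statements 1)--3). The only step to tighten is the boundedness of $(\alpha_k)$ that you invoke for $\sum_k\norm{s_k}^2<\infty$: you justify it only for Armijo backtracking, whereas for the Wolfe--Powell case the conditions \cref{eq_WolfePowellcond} alone do not cap the accepted step, so one must argue (or fix, as in the line-search setup of \cite{AMM24}) that the accepted step sizes remain bounded before the trapping argument goes through.
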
	

	\begin{proof}
		Identical to the proof of \cite[Theorem~4.9]{AMM24}.	
	\end{proof}

		\begin{remark}
			If $\CJ$ is $\mu$-strongly convex in the convex level set $\Omega$, then \cref{eq_finalineqinproof2} holds for $\bar k=0$. 
		\end{remark}

\subsection{Finite convergence on suitable quadratics}\label{sec_finconv}
	
In \Cref{sec_globconv,sec_linconv} we have established convergence results for \ref{alg_ROSE} for fairly general objective functions. 
We have also pointed out that these results are essentially identical to those derived in \cite{AMM24} for \Tu.
In this subsection we show in a model setting that the better Hessian approximation of \ref{alg_ROSE} results in \ref{alg_ROSE} requiring fewer iterations than \Tu. 
Specifically, we establish in this section that for certain quadratic objective functions, \ref{alg_ROSE} with $\ell=0$ can find the exact minimizer after a finite number of iterations, whereas this does not hold for \Tu. We confirm these results in the numerical experiments for quadratics in \Cref{sec:quad}.
By extension, we expect that in related settings, \ref{alg_ROSE} requires much fewer iterations than \Tu.
Indeed, for the real-world image registration problems in \Cref{sec:imgReg} we observe this to be the case. 

The key property of our model setting is that for some $k$ it enables the seed matrix $B_{k+1}^{(0)}$ of \ref{alg_ROSE} to approximate the Hessian $\nabla^2\CJ(x_{k+1})$ \emph{exactly}, yielding that $x_{k+2}$ is the \emph{exact} minimizer if $\ell=0$. Since $B_{k+1}^{(0)}=D_{k+1}+S_{k+1}$, where we choose $S_{k+1}=\nabla^2\CS(x_{k+1})$ and $D_{k+1}$ is a diagonal matrix
with diagonal elements that satisfy $\lambda(D_{k+1}),\Lambda(D_{k+1})\in T_{k+1}$, cf. \Cref{line_choiceofDkp1}, 
we can only ensure $B_{k+1}^{(0)}=\nabla^2\CJ(x_{k+1})$ if $\CD$ has a diagonal Hessian $\nabla^2\CD$ such that
$\lambda(\nabla^2\CD),\Lambda(\nabla^2\CD)\in T_{k+1}$. 
As this severely limits the class of addressable objective functions, we emphasize again that the goal of this subsection is not to prove another convergence result under general assumptions, but to show rigorously that \ref{alg_ROSE} requires significantly fewer iterations than \Tu~in some settings.

We have now motivated the majority of assumptions that are required to prove the first result.
We recall that the objective function has the form $\CJ=\CD+\CS$. 

\begin{lemma}\label{lem_goodHessianapprox}
	Let \Cref{ass_globconv} hold. 
	Let $\CD,\CS:\CX\rightarrow\R$ be convex quadratics with positive semi-definite Hessians $D,S\in\Lin(\CX)$.
	Let $(e_j)_{j\in\CI}\subset\CX$ be an orthonormal basis of $\CX$ and suppose that 
	$D$ is diagonal wrt. $(e_j)$, i.e., $D=\sum_{j\in\CI}\hat\gamma_j e_j e_j^T$ with $(\hat\gamma_j)\subset[0,\infty)$.
	Consider Algorithm~\ref{alg_ROSE} for some $k\in\N_0$ with $S_{k+1}:=S$.
	Then: 
	\begin{enumerate}
		\item[1)] \cref{eq_defgamBp} and \cref{eq_defgamBg} yield the same 
		diagonal operator $D_{k+1}=\sum_{j\in\CI}\gamma_j e_j e_j^T$ with $(\gamma_j)\subset[0,\infty)$. 
		\item[2)] For any $j$ such that $\hat\gamma_j\in T_{k+1}$ and $s_k^T e_j\neq 0$, there holds 
		$\gamma_j=\hat\gamma_j$, i.e., $B_{k+1}^{(0)} e_j=\nabla^2\CJ e_j$.
	\end{enumerate}
\end{lemma}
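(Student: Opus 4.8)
The plan is to exploit the quadratic structure to evaluate $z_k$ in closed form, after which the coefficients in \cref{eq_defgamBp} and \cref{eq_defgamBg} can simply be read off. Since $\CD$ and $\CS$ are quadratic with Hessians $D$ and $S$, their gradients are affine, so $\nabla\CJ=\nabla\CD+\nabla\CS$ satisfies $\nabla\CJ(x)-\nabla\CJ(\hat x)=(D+S)(x-\hat x)$ and $\nabla^2\CJ=D+S$. In particular $y_k=\nabla\CJ(x_{k+1})-\nabla\CJ(x_k)=(D+S)s_k$, and subtracting $S_{k+1}s_k=Ss_k$ (using $S_{k+1}:=S$) yields the key identity $z_k=Ds_k$. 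Since $D=\sum_{i\in\CI}\hat\gamma_i e_i e_i^T$ and $(e_j)$ is orthonormal, this gives $z_k^T e_j=\hat\gamma_j\,(s_k^T e_j)$ for every $j\in\CI$, hence $\tfrac{z_k^T e_j}{s_k^T e_j}=\hat\gamma_j\geq 0$ whenever $s_k^T e_j\neq 0$. Throughout, I assume the algorithm performs iteration $k$ up to the choice of $D_{k+1}$ (as is implicit in the hypothesis $S_{k+1}:=S$), so in particular $\nabla\CJ(x_{k+1})\neq 0$ and $T_{k+1}$ is well-defined.

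For part 1), I treat the two cases of the formulas separately. If $s_k^T e_j\neq 0$, the argument of $\hat P_{k+1}$ in \cref{eq_defgamBp} equals $\hat\gamma_j$ and the one in \cref{eq_defgamBg} equals $\lvert\hat\gamma_j\rvert=\hat\gamma_j$, so both yield $\gamma_j=\hat P_{k+1}(\hat\gamma_j)$. If $s_k^T e_j=0$, the identity above forces $z_k^T e_j=0$, so both formulas leave $\gamma_j$ free in $T_{k+1}$ and we fix the same value there. Consequently \cref{eq_defgamBp} and \cref{eq_defgamBg} admit exactly the same operator $D_{k+1}=\sum_j\gamma_j e_j e_j^T$. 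Moreover $T_{k+1}$ is a subinterval of $[0,\infty)$, since its lower endpoint $\omega_{k+1}^l=\min\{c_0,c_1\norm{\nabla\CJ(x_{k+1})}^{c_2}\}$ is nonnegative; hence $\hat P_{k+1}$ maps $[0,\infty)$ into $[0,\infty)$ and $(\gamma_j)\subset[0,\infty)$.

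For part 2), fix $j$ with $\hat\gamma_j\in T_{k+1}$ and $s_k^T e_j\neq 0$. By the computation above, the number passed to $\hat P_{k+1}$ is exactly $\hat\gamma_j$, which already lies in $T_{k+1}$, so the projection acts trivially and $\gamma_j=\hat\gamma_j$. Then $D_{k+1}e_j=\gamma_j e_j=\hat\gamma_j e_j=De_j$ by orthonormality of $(e_j)$ together with diagonality of $D$, and therefore $B_{k+1}^{(0)}e_j=(D_{k+1}+S_{k+1})e_j=(D+S)e_j=\nabla^2\CJ\,e_j$.

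The only point requiring a little care is the degenerate case $z_k^T s_k=0$. Since $D$ is positive semi-definite and $z_k^T s_k=s_k^T D s_k$, this forces $Ds_k=0$, i.e., $z_k=0$, so \Cref{line_choicetauk1} builds $T_{k+1}$ from $\tau_{k+1}^\GM=\norm{z_k}/\norm{s_k}=0$, which collapses $T_{k+1}$ to $\{\omega_{k+1}^l\}$. But $Ds_k=0$ with $s_k^T e_j\neq 0$ forces $\hat\gamma_j=0$, so the hypothesis $\hat\gamma_j\in T_{k+1}$ in part 2) gives $\omega_{k+1}^l=0=\hat\gamma_j=\gamma_j$ and the conclusion still holds. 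Apart from this bookkeeping, the whole lemma is an immediate consequence of the identity $z_k=Ds_k$, so I do not anticipate any genuine obstacle.
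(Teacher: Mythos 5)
Your proposal is correct and follows essentially the same route as the paper: both hinge on the identity $z_k = y_k - Ss_k = Ds_k$, hence $z_k^T e_j = \hat\gamma_j\, s_k^T e_j$ with $\hat\gamma_j\geq 0$, from which the agreement of \cref{eq_defgamBp} and \cref{eq_defgamBg} and the fixed-point property of the projection for $\hat\gamma_j\in T_{k+1}$ follow immediately. Your extra bookkeeping (nonnegativity of $T_{k+1}$ and the degenerate case $z_k^T s_k=0$) is harmless and only makes explicit what the paper leaves implicit.
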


\begin{proof}
	\underline{Proof of 1):} 
	Comparing the formulas \cref{eq_defgamBp} and \cref{eq_defgamBg} it suffices to show that either 
	$z_k^T e_j$ and $s_k^T e_j$ have identical signs or $z_k^T e_j=0$. 
	From
	\[
		z_k = y_k - S s_k = \nabla\CD(x_{k+1})-\nabla\CD(x_{k}) = D s_k
	\]

	it follows that $z_k^T e_j = \hat\gamma_j s_k^T e_j$. Thus, either $z_k^T e_j=0$ (if $\hat\gamma_j=0$) or 
	$z_k^T e_j$ and $s_k^T e_j$ have identical signs (if $\hat\gamma_j>0$).
	
	\underline{Proof of 2):}
	The formula $z_k^T e_j = \hat\gamma_j s_k^T e_j$ derived in the proof of 1) implies that 
	$\frac{z_k^T e_j}{s_k^T e_j}=\hat\gamma_j$ for any $j$ with $s_k^T e_j\neq 0$. 
	Since $\hat\gamma_j\in T_{k+1}$,
	it follows from \cref{eq_defgamBp} that $\gamma_j=\hat\gamma_j$ for these $j$. 
\end{proof}

Observe that for $\ell=0$ the approximation property of \Cref{lem_goodHessianapprox}~2) is particularly strong in that 
$B_{k+1} e_j = \nabla^2\CJ e_j$. Since $\CJ$ is quadratic, Taylor expansion yields
\[
\nabla\CJ(x_{k+1}+d_{k+1})^T e_j = \nabla\CJ(x_{k+1})^T e_j + (d_{k+1})^T \nabla^2\CJ e_j = 
\nabla\CJ(x_{k+1})^T e_j - \nabla\CJ(x_{k+1}) e_j = 0,
\]
where the second equality relies on $B_{k+1}d_{k+1}=-\nabla\CJ(x_{k+1})$. 
This shows that if $s_k^T e_j\neq 0$ for all $j$, then $\nabla\CJ(x_{k+1}+d_{k+1}) = 0$, i.e., $x_{k+1}+d_{k+1}$ is the minimizer of $\CJ$. 
It is not difficult to show that in this case, the step size $\alpha_{k+1}=1$ is chosen and thus
\ref{alg_ROSE} terminates with $x_{k+2}$ being the minimizer. 
Essentially, we have established the following result. 

\begin{corollary}\label{cor_terminationaftertwostepsv1}
	Let \Cref{ass_globconv} hold. 
	Let $\CD,\CS:\CX\rightarrow\R$ be convex quadratics with Hessians $D,S\in\Lin(\CX)$.
	Let $(e_j)_{j\in\CI}\subset\CX$ be an orthonormal basis of $\CX$ and suppose that 
	$D$ is diagonal wrt. $(e_j)$ and positive definite. 
	Consider Algorithm~\ref{alg_ROSE} with $\ell=0$ and the choice $S_k:=S$ for all $k$. Then: 
	There is $\hat K\in\N_0$ such that if $s_K^T e_j\neq 0$ for all $j\in\CI$ and some $K\geq\hat K$, 
	then \ref{alg_ROSE} terminates for $k=K+1$ in \Cref{line_termination} with the global minimizer of $\CJ$. 
	If the constants $c_0,C_0$ in \ref{alg_ROSE} satisfy $c_0\leq\lambda(D)$ and $C_0\geq\Lambda(D)$, then $\hat K=0$.
\end{corollary}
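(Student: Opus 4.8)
The plan is to observe that \Cref{lem_goodHessianapprox} together with the Taylor/Newton computation carried out immediately before \Cref{cor_terminationaftertwostepsv1} already contains almost all the substance, so that the proof reduces to (a) producing the index $\hat K$ and (b) checking the refinement $\hat K=0$. I would organize it in four short steps: (i) show that there is $\hat K\in\N_0$ with $[\lambda(D),\Lambda(D)]\subseteq T_{k+1}$ for all $k\geq\hat K$; (ii) for a given $K\geq\hat K$ with $s_K^Te_j\neq0$ for every $j$, apply \Cref{lem_goodHessianapprox}~2) to obtain $D_{K+1}=D$, hence $B_{K+1}=B_{K+1}^{(0)}=D_{K+1}+S=\nabla^2\CJ$ since $\ell=0$; (iii) invoke the computation preceding the corollary to conclude $\nabla\CJ(x_{K+1}+d_{K+1})=0$; (iv) conclude, again as in that paragraph, that the unit step is accepted, so that $x_{K+2}=x_{K+1}+d_{K+1}$ is the global minimizer and the termination test in \Cref{line_termination} fires at iteration $K+1$.

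Step~(i) carries the only real work. Since $D$ is diagonal with coefficients $(\hat\gamma_j)$ and positive definite, $\lambda(D)=\inf_j\hat\gamma_j>0$ and $\Lambda(D)=\sup_j\hat\gamma_j<\infty$, so $\hat\gamma_j\in[\lambda(D),\Lambda(D)]$ for all $j$. Moreover, as long as \ref{alg_ROSE} has not terminated we may assume $\nabla\CJ(x_k)\neq0$, hence $d_k\neq0$, $\alpha_k>0$ and $s_k\neq0$; and, exactly as in the proof of \Cref{lem_goodHessianapprox}, $z_k=y_k-Ss_k=\nabla\CD(x_{k+1})-\nabla\CD(x_k)=Ds_k$, so $z_k^Ts_k=s_k^TDs_k>0$. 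Therefore the first alternative in \Cref{line_choicetauk1} is selected and $T_{k+1}=[\omega_{k+1}^l,\omega_{k+1}^u]$. By \Cref{thm_globconv} we have $\norm{\nabla\CJ(x_k)}\to0$, so $\omega_{k+1}^l\to0$ and $\omega_{k+1}^u\to\infty$, which produces some $\hat K$ with $[\lambda(D),\Lambda(D)]\subseteq T_{k+1}$ for all $k\geq\hat K$. For the final assertion, if $c_0\leq\lambda(D)$ and $C_0\geq\Lambda(D)$, then the chain $\omega_{k+1}^l\leq c_0\leq\lambda(D)\leq\Lambda(D)\leq C_0\leq\omega_{k+1}^u$ holds for every $k$, so the inclusion is valid with $\hat K=0$.

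For steps~(ii)--(iv) I would simply collect consequences. Given $K\geq\hat K$ with $s_K^Te_j\neq0$ for all $j$, step~(i) yields $\hat\gamma_j\in T_{K+1}$ for all $j$, so \Cref{lem_goodHessianapprox}~2) gives $\gamma_j=\hat\gamma_j$ for every $j$, that is, $D_{K+1}=D$; since $\ell=0$ makes $m=k$ and the list of stored pairs empty, $B_{K+1}=B_{K+1}^{(0)}=D_{K+1}+S=D+S=\nabla^2\CJ$. Because $D$ is positive definite, so is $\nabla^2\CJ=D+S$, and the quadratic $\CJ$ has a unique global minimizer; the Taylor expansion displayed in the paragraph preceding \Cref{cor_terminationaftertwostepsv1} then forces $\nabla\CJ(x_{K+1}+d_{K+1})=0$, so $x_{K+1}+d_{K+1}$ is that minimizer. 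As explained in the same paragraph, the step length $\alpha_{K+1}=1$ --- the first value tried by each line search considered in this paper --- is accepted, whence $x_{K+2}=x_{K+1}+d_{K+1}$ equals the global minimizer and $\norm{\nabla\CJ(x_{K+2})}=0\leq\tl$, so \ref{alg_ROSE} terminates in \Cref{line_termination} for $k=K+1$.

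I do not anticipate a serious obstacle, since the delicate computations --- the coincidence of \cref{eq_defgamBp} and \cref{eq_defgamBg} in this setting, the identity $D_{K+1}=D$, the fact that $\nabla\CJ(x_{K+1}+d_{K+1})=0$, and the acceptance of the unit step --- have already been carried out in \Cref{lem_goodHessianapprox} and the surrounding discussion. The single point that genuinely exploits the algorithm's design is that the cautious update forces $\omega_{k+1}^l\to0$ and $\omega_{k+1}^u\to\infty$ along $\norm{\nabla\CJ(x_k)}\to0$, which is exactly what guarantees the existence of $\hat K$; the refinement $\hat K=0$ is then immediate from the elementary sandwich inequality under the additional hypotheses $c_0\leq\lambda(D)$ and $C_0\geq\Lambda(D)$.
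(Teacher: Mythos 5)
Your proposal is correct and follows essentially the same route as the paper: it leans on the Taylor/unit-step discussion preceding the corollary together with Lemma~\ref{lem_goodHessianapprox}~2), reduces the remaining work to ensuring $\lambda(D),\Lambda(D)\in T_{k+1}$, and handles this exactly as the paper does --- via $z_k^Ts_k=s_k^TDs_k>0$ forcing $T_{k+1}=[\omega_{k+1}^l,\omega_{k+1}^u]$, the limits $\omega_{k+1}^l\to0$, $\omega_{k+1}^u\to\infty$ from $\nabla\CJ(x_k)\to0$ (Theorem~\ref{thm_globconv}) for general $c_0,C_0$, and the sandwich $\omega_{k+1}^l\leq c_0\leq\lambda(D)\leq\Lambda(D)\leq C_0\leq\omega_{k+1}^u$ for the case $\hat K=0$. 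No gaps.
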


\begin{proof}
	Our prior discussion establishes the claims, but recall that it was based on \Cref{lem_goodHessianapprox}~2) whose application requires $\lambda(D),\Lambda(D)\in T_{k+1}$. 
	The specified choice for $c_0,C_0$ guarantees that this requirement is satisfied 
	because it implies $T_{k+1}=[\omega_{k+1}^l,\omega_{k+1}^u]\supset [c_0,C_0]\supset [\lambda(D),\Lambda(D)]$.
	Here, we used that $z_k^T s_k = s_k^T D s_k > 0$ because $D$ is positive definite by assumption.
	Without knowledge about $c_0$ and $C_0$, it still holds that 
	$\omega_{k+1}^l\to 0$ and $\omega_{k+1}^u\to\infty$ for $k\to\infty$, hence 
	$\lambda(D),\Lambda(D)\in [\omega_{k+1}^l,\omega_{k+1}^u]=T_{k+1}$ holds for all sufficiently large $k$. 
	The limits for $\omega_{k+1}^l$ and $\omega_{k+1}^u$ follow from the fact that $\nabla\CJ(x_k)\to 0$, which is established in \Cref{thm_globconv}. 
\end{proof}

The assumption that $s_k^T e_j\neq 0$ for all $j$ is difficult to guarantee in general, so let us also prove a result without this assumption. 
The price to pay is that we need $S$ to be diagonal. 
This is helpful because if $B_k=D_k+S$ is diagonal, then $s_k^T e_j=0$ is equivalent to $\nabla\CJ(x_k)^T e_j=0$. Since $\nabla^2\CJ$ is also diagonal in this setting, we obtain $\nabla\CJ(x_{k+1})^T e_j = 0$ by Taylor expansion, which in turn gives $d_{k+1}^T e_j=0$. Hence, $\nabla\CJ(x_{k+1}+d_{k+1})^T e_j = \nabla\CJ(x_{k+1})^T e_j + (d_{k+1})^T \nabla^2\CJ e_j = 0$, the same result as for those indices $j$ with $s_k^T e_j\neq 0$. Necessarily, then, $\nabla\CJ(x_{k+1}+d_{k+1})^T e_j=0$ for all $j$, thus $\nabla\CJ(x_{k+1}+d_{k+1}) = 0$. As before, $\alpha_{k+1}=1$ is selected and \ref{alg_ROSE} terminates with $x_{k+2}$, which is the minimizer of $\CJ$. 
These observations yield the following result. 

\begin{lemma}\label{lem_terminationaftertwostepsv2}
	Let \Cref{ass_globconv} hold. Let $\CD,\CS:\CX\rightarrow\R$ be convex quadratics with diagonal Hessians $D,S\in\Lin(\CX)$
	wrt. to the orthonormal basis $(e_j)_{j\in\CI}$. Also, let $D$ be positive definite. 
	Consider Algorithm~\ref{alg_ROSE} with $\ell=0$ and the choice $S_k:=S$ for all $k$. 
	Let $K\in\N_0$ be the smallest number such that $\omega_K^l\leq\lambda(D)$ and $\omega_K^u\geq\Lambda(D)$ are satisfied.
	Then \ref{alg_ROSE} terminates for $k=K+1$ in \Cref{line_termination} with the global minimizer of $\CJ$. 
	If the constants $c_0,C_0$ in \ref{alg_ROSE} satisfy $c_0\leq\lambda(D)$ and $C_0\geq\Lambda(D)$, then $K=0$.
\end{lemma}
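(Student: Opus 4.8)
The plan is to follow the reasoning that precedes the statement and to pin down the iteration index. First I would record the structural facts that hold along any run in this setting. Since $\ell=0$, no updates are applied, so $B_k=B_k^{(0)}=D_k+S$ for every $k$; this operator is diagonal with respect to $(e_j)_{j\in\CI}$, because $D_0=\tau I$ is diagonal, every later $D_k$ is diagonal by \cref{eq_defgamBp} (which agrees with \cref{eq_defgamBg} here by \Cref{lem_goodHessianapprox}~1)), and $S$ is diagonal by hypothesis; moreover $B_k$ is positive definite by the argument used in the proof of \Cref{thm_globconv}~1). Hence $B_k^{-1}$ is diagonal with strictly positive diagonal, and since $s_k=-\alpha_k B_k^{-1}\nabla\CJ(x_k)$ with $\alpha_k>0$, we obtain the equivalence $s_k^T e_j=0\iff\nabla\CJ(x_k)^T e_j=0$. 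Also $\nabla^2\CJ=D+S$ is diagonal, and, because $\CD,\CS$ are quadratic and $S_{k+1}=S$, we have $z_k=y_k-S s_k=\nabla\CD(x_{k+1})-\nabla\CD(x_k)=D s_k$, so $z_k^T s_k=s_k^T D s_k>0$ unless $s_k=0$ — but $s_k=0$ means $\nabla\CJ(x_k)=0$, in which case the algorithm has already stopped and there is nothing to prove. Therefore $T_{k+1}=[\omega_{k+1}^l,\omega_{k+1}^u]$ in \Cref{line_choicetauk1}.

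Next I would show that the seed matrix used in iteration $K+1$ reproduces $\nabla^2\CJ$ on the directions moved in step $K$. By the choice of $K$ we have $[\lambda(D),\Lambda(D)]\subseteq T_{K+1}$, so, since every diagonal entry $\hat\gamma_j$ of $D$ lies in $[\lambda(D),\Lambda(D)]$, \Cref{lem_goodHessianapprox}~2) (applied with the index $k$ there set to $K$) gives $\gamma_j=\hat\gamma_j$ for each $j$ with $s_K^T e_j\neq 0$, i.e. $B_{K+1}e_j=B_{K+1}^{(0)}e_j=\nabla^2\CJ e_j$ for such $j$. The final assertion of the lemma follows here as well: if $c_0\le\lambda(D)$ and $C_0\ge\Lambda(D)$, then $[\lambda(D),\Lambda(D)]\subseteq[c_0,C_0]\subseteq T_m$ for every $m$, so the defining inequalities for $K$ already hold at index $0$ and $K=0$.

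Then I would prove $\nabla\CJ(x_{K+1}+d_{K+1})=0$ by verifying $\nabla\CJ(x_{K+1}+d_{K+1})^T e_j=0$ for all $j$, using that $\CJ$ is quadratic, hence $\nabla\CJ(x+h)=\nabla\CJ(x)+\nabla^2\CJ h$. For $j$ with $s_K^T e_j\neq 0$: combining $B_{K+1}e_j=\nabla^2\CJ e_j$, symmetry of $B_{K+1}$, and $B_{K+1}d_{K+1}=-\nabla\CJ(x_{K+1})$ yields $\nabla\CJ(x_{K+1}+d_{K+1})^T e_j=\nabla\CJ(x_{K+1})^T e_j+(B_{K+1}d_{K+1})^T e_j=0$. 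For $j$ with $s_K^T e_j=0$: the equivalence above gives $\nabla\CJ(x_K)^T e_j=0$; then $\nabla\CJ(x_{K+1})^T e_j=\nabla\CJ(x_K)^T e_j+s_K^T\nabla^2\CJ e_j=0$ since $\nabla^2\CJ$ is diagonal; diagonality of $B_{K+1}$ then forces $d_{K+1}^T e_j=0$; and once more $\nabla\CJ(x_{K+1}+d_{K+1})^T e_j=\nabla\CJ(x_{K+1})^T e_j+d_{K+1}^T\nabla^2\CJ e_j=0$. Since $(e_j)_{j\in\CI}$ is an orthonormal basis, $\nabla\CJ(x_{K+1}+d_{K+1})=0$, and convexity of $\CJ$ makes $x_{K+1}+d_{K+1}$ a global minimizer.

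Finally, a short line-search argument shows the full step $\alpha_{K+1}=1$ is accepted — the restriction of the convex quadratic $\CJ$ to the ray $t\mapsto x_{K+1}+t d_{K+1}$ is minimized at $t=1$, so both the Armijo and the Wolfe--Powell conditions hold there, and $\alpha=1$ is always tried first — so that $x_{K+2}=x_{K+1}+d_{K+1}$ is the global minimizer, $\norm{\nabla\CJ(x_{K+2})}=0\le\tl$, and \ref{alg_ROSE} stops in \Cref{line_termination} at $k=K+1$. I expect the main obstacle to be the index bookkeeping: carefully matching $K$, which is defined through the safeguards $\omega_m^l,\omega_m^u$ (and hence through the gradient norms along the trajectory), to the iteration at which the seed matrix first reproduces $\nabla^2\CJ$ in the directions along which the previous step did not vanish, and disposing of the trivial case that the algorithm terminates before iteration $K+1$; the diagonality facts, the quadratic Taylor identities, and the acceptance of the unit step are then routine.
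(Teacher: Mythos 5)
Your route is the same as the paper's: the lemma is proved there only through the discussion that precedes it (diagonality of $B_k$ gives $s_k^T e_j=0\iff\nabla\CJ(x_k)^T e_j=0$, Taylor expansion handles those directions, \Cref{lem_goodHessianapprox}~2) handles the directions with $s_K^T e_j\neq 0$, the unit step is accepted, and the method stops in \Cref{line_termination}), and your write-up supplies the same supporting facts ($B_k=B_k^{(0)}=D_k+S$ diagonal and positive definite, $z_k=Ds_k$ so $z_k^Ts_k>0$ and $T_{k+1}=[\omega_{k+1}^l,\omega_{k+1}^u]$, the $K=0$ statement).

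The one step I would not accept as written is exactly the index bookkeeping you flagged yourself: you assert that ``by the choice of $K$ we have $[\lambda(D),\Lambda(D)]\subseteq T_{K+1}$.'' The definition of $K$ bounds $\omega_K^l$ and $\omega_K^u$, i.e., it yields $[\lambda(D),\Lambda(D)]\subseteq T_K$, the interval that constrains $D_K$ in \Cref{line_choiceofDkp1}, not $T_{K+1}=[\omega_{K+1}^l,\omega_{K+1}^u]$. Since $\omega_{K+1}^l,\omega_{K+1}^u$ are built from $\norm{\nabla\CJ(x_{K+1})}$ and gradient norms are not monotone along the iteration, bracketing at index $K$ does not imply bracketing at index $K+1$: if, say, $c_0>\lambda(D)$ and the condition at $K$ holds only because $c_1\norm{\nabla\CJ(x_K)}^{c_2}\leq\lambda(D)$, it can fail at $K+1$. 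With the literal indexing, your argument applied via \Cref{lem_goodHessianapprox}~2) at $k=K-1$ shows, for $K\geq 1$, that $x_{K+1}$ is already the exact minimizer and the method stops in iteration $K$, one iteration earlier than claimed; the case in which termination with seed $D_{K+1}$ follows exactly as you wrote is $c_0\leq\lambda(D)$ and $C_0\geq\Lambda(D)$, since then every $T_m\supseteq[c_0,C_0]\supseteq[\lambda(D),\Lambda(D)]$ and $K=0$. So either justify the containment for $T_{K+1}$ under whatever reading of $\omega_K$ you adopt, or state that with the algorithm's indexing the conclusion holds at iteration $K$ when $K\geq 1$; this imprecision is inherited from the statement itself, but your proof should not paper over it by declaring the wrong interval controlled. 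A smaller point: ``the ray is minimized at $t=1$, so the Armijo condition holds'' needs $\sigma\leq 1/2$ in \cref{eq_armijocond} (for a quadratic, $\CJ(x_{K+1}+d_{K+1})=\CJ(x_{K+1})+\tfrac12\nabla\CJ(x_{K+1})^T d_{K+1}$), a restriction the paper also leaves implicit; the remainder of your argument matches the paper's reasoning.
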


The arguments of this subsection remain valid under small perturbations of $\nabla^2\CJ$ and $B_k$, although 
the final iterate will no longer be the \emph{exact} minimizer. 
The precise statement, whose proof is omitted for brevity, reads as follows. 
We recall that $\epsilon$ is the termination tolerance in Algorithm~\ref{alg_ROSE}.

\begin{lemma}\label{lem_terminationaftertwostepsv3}
	Let \Cref{ass_globconv} hold.  
	Let $\CD:\CX\rightarrow\R$ be twice continuously differentiable and strongly convex with
	Hessian $\nabla^2\CD(x)=D+T_1(x)$ for all $x\in\CX$, where $D\in\Lin(\CX)$ is diagonal 
	wrt. to the orthonormal basis $(e_j)_{j\in\CI}$ and positive definite. 
	Let $\CS:\CX\rightarrow\R$ be twice continuously differentiable with Hessian 
	$\nabla^2\CS(x)=S+T_2(x)+T_3(x)$ for all $x\in\CX$, where $S+T_2(x)$ is positive semi-definite for all $x\in\CX$. 
	Let $\hat x\in\CX$ and $\hat\Omega:=\{x\in\CX:\CJ(x)\leq\CJ(\hat x)\}$. Consider Algorithm~\ref{alg_ROSE} with $\ell=0$ and the choice $S_k:=S+T_2(x_k)$ for all $k$. Then: For all $\epsilon>0$ there is $\delta>0$ such that if $\sup_{x\in\hat\Omega}\norm{T_1(x)}+\norm{T_2(x)}+\norm{T_3(x)}\leq\delta$, then for any $x_0\in\hat\Omega$, \ref{alg_ROSE} terminates for $k=K+1$ in \Cref{line_termination} with $x_{K+2}$ satisfying $\norm{\nabla\CJ(x_{K+2})}\leq\epsilon$,  
	where $K\in\N_0$ is the smallest number such that $\omega_K^l\leq\lambda(D)$ and $\omega_K^u\geq\Lambda(D)$ are satisfied.
	If $c_0\leq\lambda(D)$ and $C_0\geq\Lambda(D)$, then $K=0$. 
\end{lemma}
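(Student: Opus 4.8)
The plan is to reduce the perturbed statement to the exact statement established in Lemma~\ref{lem_terminationaftertwostepsv2} by a continuity/stability argument along the (finitely many) iterations that matter. I would begin by noting that the iterates $(x_k)$ stay in the sublevel set $\hat\Omega$ since $(\CJ(x_k))$ is monotonically decreasing by \Cref{thm_globconv}, so all quantities $T_1(x_k),T_2(x_k),T_3(x_k)$ are uniformly bounded by $\delta$ along the run. Next I would establish that the index $K$ (smallest with $\omega_K^l\le\lambda(D)$ and $\omega_K^u\ge\Lambda(D)$) is finite and, for $\delta$ small enough, bounded independently of $\delta$: indeed $\omega_{k+1}^l\to 0$ and $\omega_{k+1}^u\to\infty$ follow from $\nabla\CJ(x_k)\to 0$ (\Cref{thm_globconv}), and these limits persist under the perturbation because $\CD$ is still strongly convex and $\nabla\CJ$ is still Lipschitz on $\hat\Omega$, so \cref{ass_globconv} continues to hold and the convergence theory applies verbatim. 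When $c_0\le\lambda(D)$ and $C_0\ge\Lambda(D)$ we get $[\lambda(D),\Lambda(D)]\subset[c_0,C_0]\subset T_k$ for every $k$, hence $K=0$.

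The heart of the argument is to show that at iteration $k=K$, the seed matrix $B_{K+1}^{(0)}=D_{K+1}+S_{K+1}=D_{K+1}+S+T_2(x_{K+1})$ approximates $\nabla^2\CJ(x_{K+1})=D+T_1(x_{K+1})+S+T_2(x_{K+1})+T_3(x_{K+1})$ up to an error of order $\delta$. The key point is that $z_K=y_K-S_{K+1}s_K=\nabla\CD(x_{K+1})-\nabla\CD(x_K)-T_2(x_{K+1})s_K=(\overline{\nabla^2\CD_K}-T_2(x_{K+1}))s_K$, and $\overline{\nabla^2\CD_K}=D+\overline{T_1}$ with $\|\overline{T_1}\|\le\delta$. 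Decomposing $z_K^Te_j=\hat\gamma_j s_K^Te_j+(\text{error of size }\delta\|s_K\|)$, where $\hat\gamma_j$ are the diagonal entries of $D$, one sees that for indices with $|s_K^Te_j|$ not too small the quotient $z_K^Te_j/(s_K^Te_j)$ is within $O(\delta)$ of $\hat\gamma_j\in[\lambda(D),\Lambda(D)]\subset T_{K+1}$, so the projection $\hat P_{K+1}$ is essentially inactive and $\gamma_j\approx\hat\gamma_j$. For indices with $s_K^Te_j=0$ I would use the diagonal structure of $S+T_2$—actually here $T_2$ need not be diagonal, so this is where care is needed. I expect the cleanest route is to choose, among the freedom in \cref{eq_defgamBp}/\cref{eq_defgamBg} for those indices, $\gamma_j$ so that $B_{K+1}^{(0)}$ deviates from $\nabla^2\CJ(x_{K+1})$ only through the genuinely non-diagonal perturbations $T_1,T_2,T_3$, giving $\|B_{K+1}^{(0)}-\nabla^2\CJ(x_{K+1})\|\le c\delta$ for a universal constant $c$; alternatively one strengthens the hypothesis so that the components of $\nabla\CJ(x_{K+1})$ along such $e_j$ vanish, mimicking the argument in the paragraph preceding Lemma~\ref{lem_terminationaftertwostepsv2}.

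Once $\|B_{K+1}^{(0)}-\nabla^2\CJ(x_{K+1})\|\le c\delta$ is in hand, I would finish by a standard Newton-type perturbation estimate. With $\ell=0$ we have $B_{K+1}=B_{K+1}^{(0)}$ and $d_{K+1}=-B_{K+1}^{-1}\nabla\CJ(x_{K+1})$; strong convexity of $\CD$ gives a uniform lower bound $\lambda(B_{K+1})\ge\lambda(D)>0$ so $\|d_{K+1}\|$ is controlled. A Taylor expansion of $\nabla\CJ$ around $x_{K+1}$ along $d_{K+1}$, together with Lipschitz continuity of $\nabla^2\CJ$ on $\hat\Omega$ (which follows from the assumed $C^2$ regularity after shrinking $\hat\Omega$ if needed), yields $\|\nabla\CJ(x_{K+1}+d_{K+1})\|\le (\|\nabla^2\CJ(x_{K+1})-B_{K+1}\|\,\|B_{K+1}^{-1}\| + O(\|d_{K+1}\|))\,\|\nabla\CJ(x_{K+1})\|$, which is $\le\frac12\|\nabla\CJ(x_{K+1})\|$ once $\delta$ is small and hence, after at most finitely many preliminary steps, $\le\epsilon$. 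One then checks, exactly as in the discussion preceding Lemma~\ref{lem_terminationaftertwostepsv2}, that the Armijo line search accepts the full step $\alpha_{K+1}=1$ for $\delta$ small (the predicted decrease dominates the quadratic remainder), so $x_{K+2}=x_{K+1}+d_{K+1}$ and \ref{alg_ROSE} terminates in \Cref{line_termination} with $\|\nabla\CJ(x_{K+2})\|\le\epsilon$. The main obstacle is the bookkeeping for indices $j$ with $s_K^Te_j=0$ when $T_2$ is not diagonal; everything else is a routine perturbation of the exact case.
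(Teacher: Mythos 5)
The paper does not print a proof of this lemma (it is explicitly ``omitted for brevity''), and its intended route is the one you take: perturb the exact arguments of \Cref{lem_goodHessianapprox}--\Cref{lem_terminationaftertwostepsv2}. However, your central intermediate claim, $\norm{B_{K+1}^{(0)}-\nabla^2\CJ(x_{K+1})}\leq c\delta$, has a genuine gap, and the gap is not the one you flag. Writing $z_K=(D+E)s_K$ with $\norm{E}=O(\delta)$ (your formula $z_K=(\overline{\nabla^2\CD_K}-T_2(x_{K+1}))s_K$ in fact drops the averaged $T_2,T_3$ contributions from $\nabla\CS$, but that only changes the constant), the coefficient from \cref{eq_defgamBp} satisfies, for $s_K^Te_j\neq 0$, only $\lvert\gamma_j-\hat\gamma_j\rvert\leq\lvert e_j^TEs_K\rvert/\lvert s_K^Te_j\rvert$. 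For indices with $\lvert s_K^Te_j\rvert$ small relative to $\delta\norm{s_K}$ this ratio is unbounded, and the cautious projection $\hat P_{K+1}$ gives no rescue: it only confines $\gamma_j$ to $T_{K+1}=[\omega_{K+1}^l,\omega_{K+1}^u]$, whose endpoints tend to $0$ and $\infty$ as the gradient vanishes. Hence $\norm{D_{K+1}-D}$ is \emph{not} $O(\delta)$ in general, and the operator-norm Newton perturbation estimate you build on it cannot be run. These small-but-nonzero components are not covered by your ``bookkeeping for $s_K^Te_j=0$'': for them there is no freedom in choosing $\gamma_j$, and no zero-propagation argument as in \Cref{lem_terminationaftertwostepsv2} applies. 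Moreover, your proposed fix for the exactly-orthogonal indices --- picking the free $\gamma_j$ so that $B_{K+1}^{(0)}$ matches $\nabla^2\CJ(x_{K+1})$ --- is not available: the algorithm does not know $D$, and the lemma asserts the conclusion for the algorithm's admissible choices. A workable argument has to be directional rather than in operator norm: the projection preserves the weighted bound $\lvert\gamma_j-\hat\gamma_j\rvert\,\lvert s_K^Te_j\rvert\leq c\delta\norm{s_K}$ even when the ratio is large, and one must transfer this smallness from the components $e_j^Ts_K$ to the components $e_j^Td_{K+1}$ of the next direction, which requires the (near-)diagonal structure of $B_K$ and $B_{K+1}$ up to $O(\delta)$ terms and quantitative control of their inverses --- exactly the structural input your shortcut bypasses.

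Two further steps do not match the statement to be proved. First, $\lam(B_{K+1})\geq\lam(D)$ is unjustified at the point where you use it: a priori one only has $\lam(B_{K+1})\geq\lam(D_{K+1})\geq\omega_{K+1}^l$, and $\omega_{K+1}^l=\min\{c_0,c_1\norm{\nabla\CJ(x_{K+1})}^{c_2}\}$ can be far below $\lam(D)$; a lower bound by $\lam(D)$ only becomes available once $D_{K+1}\approx D$ has been established, which is the very point at issue. Second, your concluding estimate yields a contraction factor $\tfrac12$ and then ``after at most finitely many preliminary steps'' a gradient below $\epsilon$; but the lemma asserts termination in \Cref{line_termination} exactly at $k=K+1$. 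What is needed instead is a direct bound $\norm{\nabla\CJ(x_{K+1}+d_{K+1})}\leq C\delta$ with $C$ uniform over $x_0\in\hat\Omega$ (using that $\norm{d_{K+1}}$ is bounded on $\hat\Omega$), so that $\delta$ can be chosen from $\epsilon$ alone, together with the full-step acceptance $\alpha_{K+1}=1$ argued from the same directional estimates.
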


\begin{remark}\label{rem_fconvwp}
	\phantom{lb}
	\begin{enumerate}
		\item[1)] Let us specialize \Cref{lem_terminationaftertwostepsv3} to a strongly convex regularizer of the form $\CS(x)=\alpha s(x)$, $\alpha>0$, 
		$S_k:=\alpha\nabla^2 s(x_k)$ for all $k$, and a quadratic function $\CD$ with a positive definite and diagonal Hessian. 
		\Cref{lem_terminationaftertwostepsv3} with $T_1(x):=T_3(x):=S:=0$ and $T_2(x):=\alpha\nabla^2 s(x_k)$ yields that 
		as $\alpha$ decreases we expect \ref{alg_ROSE} with $\ell=0$ to terminate \emph{earlier} because $\sup_{x\in\hat\Omega}\norm{T_1(x)}+\norm{T_2(x)}+\norm{T_3(x)}$ becomes smaller. 
		In particular, if we allow $\alpha=0$ then \ref{alg_ROSE} with $\ell=0$ requires only 2 iterations if $c_0$ and $1/C_0$ are small enough, 
		and the same statement holds if $\alpha>0$ is sufficiently small. 
		In contrast, for larger values of $\alpha$ or $\ell>0$ more than two iterations may be required. 
		Similarly, if $c_0$ and $1/C_0$ are not sufficiently small or the interval $T_{k+1}$ is restricted in such a way that $\lambda(D)$ or $\Lambda(D)$ do not belong to it, then \ref{alg_ROSE} with $\ell=0$ will not terminate in two iterations.  
		The numerical results in \Cref{sec:quad} match these expectations quite well, cf. \Cref{tab:quadPrb}.
		\item[2)] Let us briefly discuss what convergence behavior we expect of \ref{alg_ROSE} for $\ell>0$. 
		For $\ell=0$ the key point is that a good approximation $B_{k+1}^{(0)}$ of $\nabla^2\CJ(x_{k+1})$ translates into $B_{k+1}$ being a good approximation of $\nabla^2\CJ(x_{k+1})$. 
		For $\ell>0$ the identity $B_{k+1}^{(0)}=B_{k+1}$ is no longer true, so although $B_{k+1}^{(0)}=\nabla^2\CJ(x_{k+1})$ remains valid, 
		the rank-two updates actually destroy the perfect approximation and guarantee that $B_{k+1}\neq\nabla^2\CJ(x_{k+1})$, unless they cancel each other out.
		On the other hand, as $\ell$ increases \ref{alg_ROSE} becomes more similar to a BFGS-type method, so we expect a tendency of improving (linear) convergence rates, which suggests lower iteration numbers. 
		By observing that the rank-two updates modify the eigenvalues of $B_{k+1}$ vs. $B_{k+1}^{(0)}$, we may further suspect
		that the updates are most helpful in settings where $B_{k+1}^{(0)}$ is unable to approximate the spectrum of $\nabla^2\CJ$ well, for instance if 
		$T_{k+1}$ is too small. Thus, if $T_{k+1}$ does not include $\lambda(D)$ or $\Lambda(D)$ (by a certain margin), the performance of 
		\ref{alg_ROSE} with $\ell=0$ should be worse than with $\ell>0$, regardless of the value of $\alpha$. 
		Again, the numerical results in \Cref{sec:quad} are very well-aligned with these expectations, cf. \Cref{tab:quadPrb}.  
		\item[3)] It is clear that if $\nabla^2\CD$
		is diagonal but not a scalar multiple of the identity, then 
		the seed matrix of \Tu~cannot agree with the exact Hessian. Therefore, the iteration numbers of \Tu~may be significantly larger than those of \ref{alg_ROSE} in the setting of \Cref{lem_terminationaftertwostepsv2}. \Cref{tab:quadPrb} confirms this.
		By extension, we also expect lower iteration numbers from \Ro~if $\CD$ has a Hessian that is close to a diagonal matrix (at least near the minimizer)
		or ill-conditioned, cf. also \Cref{lem_terminationaftertwostepsv3}. The results of the real-world experiments in \Cref{sec_rlprobs} support this view. 
	\end{enumerate}	
\end{remark}	
		

\newcommand{\Ds}{{\mathrm{Ds}}}
\newcommand{\Dg}{{\mathrm{Dg}}}

\section{Numerical experiments}\label{sec:experiments}
						
		The numerical experiments are divided into two parts. 
		First we consider real-life image registration problems to demonstrate the practical merits of \ref{alg_ROSE}, then we study quadratic model problems to illustrate its convergence properties in an academic setting. 
		All experiments are performed in \textsc{MATLAB} (R2023b) on an Apple M1 Pro with 32GB of RAM.
		
		\subsection{Real-life image registration problems}\label{sec_rlprobs}
		
		We first demonstrate the effectiveness of Algorithm~\ref{alg_ROSE} over its predecessor \Tu~from \cite{AMM24} using the same 22 real-life large-scale highly non-convex and ill-posed image registration problems that were used in \cite{AMM24} to compare \Tu~with \LBFGS~and other structured \LBFGS~methods. 
		A description of these problems is provided next. 
		
		\subsubsection{Problems under consideration}\label{sec:imgReg}
		
		Registration problems are generally highly non-convex and ill-posed. Given a
		pair of images $T$ and $R$, the goal is to find a transformation field $\phi$
		such that the transformed image $T(\phi)$ is similar to $R$, i.e., 
		$T\circ\phi\approx R$. To determine $\phi$, we solve an unconstrained optimization problem
		\begin{equation}\label{eq_basicimageregprob}
			\min_\phi \CJ(\phi) = \CD(\phi;T,R)+ \alpha S(\phi),
		\end{equation}
		where $\CD$ measures the similarity between the transformed image $T(\phi)$ and
		$R$.
		The regularizer $\CS=\alpha S$ guarantees that the problem is solvable and it enforces smoothness in the field.
		We follow the ``discretize-then optimize'' approach. 
		To obtain a clearer comparison we work with a fixed discretization instead of a multilevel approach.
		In consequence, for each problem the transformation field satisfies $\phi \in \R^n$ with a fixed $n$ that defines the number of variables. 
		The values of $n$ are provided in \cref{tab:IRproblems}.
		
		An important quality measure for the registration is the \emph{target registration error (TRE)}. It measures the distance between the landmark locations in the reference image and in the target image transformed with the transformation field $\phi$.
		
		The 22 test cases, listed in \cref{tab:IRproblems}, cover many different registration models, estimating small to large deformations. The three-dimensional (3D) lung CT images are from the well-known DIR dataset \cite{Knig2018,Castillo2009}, 
		and the rest of the datasets are from \cite{FAIR09}; in \cref{fig_imgreg} we display five of the datasets together with registration results. 2D-Disc images are an academic example with large deformations from \cite{BMR13}.			
		Let us also point out that the test cases do not include landmark constrained registration problems, but that 
		these problems fit in the framework of this paper if the constraints are eliminated as proposed in \cite{HHM09}, resulting in an unconstrained optimization problem formulated in the basis of the constraint set.
		
		The test cases comprise the fidelity measures sum of squared
		difference (SSD), mutual information (MI) \cite{V95} and normalized gradient fields (NGF) \cite{Haber2006}. They include 
		a quadratic first order (Elas)~\cite{Broit1981}, a quadratic second order (Curv)~\cite{Fischer2004} and a non-quadratic first order (H-elas) \cite{BMR13} regularizer. We stress that the hyperelastic regularizer (H-elas) is non-convex. 
		Also, computing the full Hessian of the hyperelastic regularizer is expensive. As proposed in \cite{BMR13} we therefore use a Gauss--Newton-like approximation of the Hessian for $S_k$. In particular, this approximation is positive semi-definite. 
		It is important to note here that the convergence analysis of \ref{alg_ROSE} allows for $S_k\neq\nabla^2\CS(x_k)$.	
		
\begin{table}[ht]
\footnotesize
\caption{We use 22~non-quadratic image registration problems as test cases (TC). 
Data-fidelity (MI, NGF, SSD) and regularization (Curvature, Elasticity, Hyperelasticity) are
denoted by $\CD$ and $\CS$, respectively; the regularization parameter is
$\alpha$, cf. \cref{eq_basicimageregprob}. 
The problem size is $n=d\cdot\prod_{k=1}^dm_k$, where $d\in\{2,3\}$ denotes
the data dimensionality (2D or 3D) and $m_k$ the corresponding data resolution. 
The data resolution for Hands, PET-CT and MRI data are $128 \times 128$, for Disc data $16\times16$, Lung data $64 \times 64 \times X$ where $X \in [24,28]$, and Brain data $32 \times 16 \times 32$.
The last column (Initial TRE) reports the initial target registration error and in brackets the standard deviation, cf.~\cref{sec:imgReg}.
In \cref{fig_imgreg} we display five of the data sets together with registration results.
}\label{tab:IRproblems}

\centering
\begin{tabular}{rlrllrc}
	\toprule
	TC
	& Dataset & $n$
	& $\CD$ & $\CS$ & $\alpha$
	& Initial TRE
	\\
	\midrule
	1 & 2D-Hands & $32\,768$	 & SSD & Curv   & $1.5 \cdot 10^3$ & 1.04 (0.62)\\
	2 & 2D-Hands & $32\,768$	 & SSD & Elas   & $1.5 \cdot 10^3$ & 1.04 (0.62)\\
	3 & 2D-Hands &	$32\,768$    & SSD & H-elas & $(10^3,20)$ & 1.04 (0.62)\\
	4 & 2D-Hands & $32\,768$	 & NGF  & Curv   & $0.01$ & 1.04 (0.62)\\
	5 & 2D-Hands & $32\,768$	 & NGF  & Elas   & $1$  & 1.04 (0.62)\\
	6 & 2D-Hands &	$32\,768$    & NGF & H-elas & $(1,1)$ & 1.04 (0.62)\\
	7 & 2D-Hands & $32\,768$	 & MI  & Curv   & $5 \cdot 10^{-3}$ & 1.04 (0.62)\\
	8 & 2D-Hands & $32\,768$	 & MI  & Elas   & $5 \cdot 10^{-3}$  & 1.04 (0.62)\\
	9 & 2D-Hands &	$32\,768$    & MI & H-elas & $(10^{-3},1)$ & 1.04 (0.62)\\    
	\midrule
	10 & 2D-PET-CT & $32\,768$	 & MI  & Elas   & $10^{-4}$ & N.A.\\
	11 & 2D-PET-CT & $32\,768$	 & MI  & Curv   & $0.1$ & N.A.\\
	12 & 2D-PET-CT & $32\,768$	 & NGF  & Elas   & $0.05$ & N.A.\\
	13 & 2D-PET-CT & $32\,768$	 & NGF  & Curv   & $10$ & N.A.\\
    \midrule
    14 & 2D-MRI-Head & $32\,768$	 & MI  & Elas   & $10^{-3}$ & N.A.\\
	15 & 2D-MRI-Head & $32\,768$	 & NGF  & Elas   & $0.1$ & N.A.\\	
	\midrule
	16 & 2D-Disc  & $512$	 & SSD & H-elas & $(100,20)$ & N.A. \\
	\midrule
	17 & 3D-Brain  & $49\,152$	 & SSD & H-elas & $(100,10,100)$ & N.A. \\
	\midrule
	18 & 3D-Lung  & $294\,912$ & NGF & Curv   & $100$ & 3.89 (2.78) \\
    19 & 3D-Lung  & $307\,200$ & NGF & Curv   & $100$ & 9.83 (4.86) \\
    20 & 3D-Lung  & $319\,488$ & NGF & Curv   & $100$ & 6.94 (4.05) \\
    21 & 3D-Lung  & $331\,776$ & NGF & Curv   & $100$ & 7.48 (5.51) \\
    22 & 3D-Lung  & $344\,064$ & NGF & Curv   & $100$ & 4.34 (3.90)\\
	\bottomrule    
\end{tabular}
\end{table}

\if 0
\begin{table}
\small
\centering
\caption{%
Iteration results for the eight Hessian initialization strategies applied to
image registration problems (test cases 1--8).
The columns (from left to right) report the
strategy (S), the iteration count (iter), 
the total number of function evaluations (feval), 
the reduction in objective function $\mathrm{red}:=\CJ(\phi)/\CJ(\phi_0)$,
the average run time in seconds (sec.) and 
target registration error (TRE; mean$\pm$standard deviation).
Entries in boldface indicate lowest run-time (fastest convergence) or
smallest TRE (highest accuracy).
}\label{tab:IR_Results}

\setlength{\tabcolsep}{2.5pt}
\begin{tabular}{lrrrrrc@{\hskip3mm}rrrrrr|rr|}
\cline{1-6}
\cline{8-12}
	S & iter & feval & red & sec.  & TRE
	& & iter & feval & red & sec.  & TRE
	\\
\cline{1-6}
\cline{8-12}
\rule{0pt}{4ex}
 &\multicolumn{5}{c}{TC-1: 2D-Hands, SSD, Curv}
&&\multicolumn{5}{c}{TC-2: 2D-Hands, SSD, Elas}
\\
\cline{1-6}\cline{8-12}
Hp   &  1000 &  4905 &    27.30 &    43.48 &     0.65$\pm$0.48   &&    184 &   479 &    28.14 &    10.32 &     0.59$\pm$0.33 \\
Hy   &  1000 &  1030 &    24.70 &    19.77 &     0.53$\pm$0.31   &&    243 &   251 &    27.36 &     \textbf{9.66} &     0.53$\pm$0.28 \\
Bp   &    53 &    59 &    21.94 &     8.82 &     0.38$\pm$0.17   &&     76 &   125 &    27.00 &    11.38 &     0.52$\pm$0.26 \\
Bz   &   444 &   445 &    20.49 &    70.34 &     0.37$\pm$0.16   &&    182 &   287 &    27.05 &    15.35 &     0.52$\pm$0.26 \\
Bu   &   444 &   445 &    20.49 &    70.17 &     0.37$\pm$0.16   &&    199 &   310 &    27.06 &    16.40 &     0.52$\pm$0.27 \\
GM   &    78 &    80 &    20.84 &    12.89 &     \textbf{0.35$\pm$0.17}   &&    113 &   116 &    27.00 &    11.62 &     \textbf{0.51$\pm$0.26} \\
Adap &    44 &    50 &    21.95 &     \textbf{7.27} &     0.37$\pm$0.16   &&     94 &   121 &    27.00 &    12.53 &     \textbf{0.51$\pm$0.26} \\
\cline{1-6}\cline{8-12}
\rule{0pt}{4ex}
 &\multicolumn{5}{c}{TC-3: 2D-Hands, MI, Curv}
&&\multicolumn{5}{c}{TC-4: 2D-Hands, MI, Elas}
\\
\cline{1-6}\cline{8-12}
Hp   &   516 &  2553 &    79.27 &    31.17 &     0.86$\pm$0.69   &&    154 &   446 &    73.53 &     8.47 &     0.64$\pm$0.37 \\
Hy   &  1000 &  1034 &    70.81 &    33.24 &     0.58$\pm$0.35   &&    135 &   137 &    73.64 &     \textbf{5.50} &     0.64$\pm$0.36 \\
Bp   &    56 &    59 &    69.99 &    10.12 &     0.52$\pm$0.28   &&     43 &    67 &    72.76 &     6.79 &     0.58$\pm$0.34 \\
Bz   &   301 &   302 &    67.80 &    53.48 &     \textbf{0.35$\pm$0.16}   &&     72 &    94 &    72.84 &     6.05 &     0.58$\pm$0.32 \\
Bu   &    56 &    59 &    69.99 &    10.14 &     0.52$\pm$0.28   &&     43 &    67 &    72.76 &     6.97 &     0.58$\pm$0.34 \\
GM   &   101 &   104 &    67.52 &    18.29 &     0.36$\pm$0.17   &&     64 &    66 &    72.77 &     7.87 &     \textbf{0.57$\pm$0.31} \\
Adap &    53 &    54 &    68.92 &     \textbf{9.59} &     0.41$\pm$0.19   &&     41 &    56 &    72.81 &     6.54 &     0.57$\pm$0.33 \\
\cline{1-6}\cline{8-12}
\rule{0pt}{4ex}
 &\multicolumn{5}{c}{TC-5: 3D-Lung, NGF, Curv}
&&\multicolumn{5}{c}{TC-6: 3D-Lung, NGF, Elas}
\\
\cline{1-6}\cline{8-12}
Hp   &    75 &   204 &    95.93 &    86.48 &     2.94$\pm$2.20   &&    111 &   301 &    93.38 &   148.39 &     2.99$\pm$3.15 \\
Hy   &   184 &   185 &    94.84 &   161.83 &     1.70$\pm$0.99   &&    219 &   222 &    92.20 &   224.96 &     1.67$\pm$1.38 \\
Bp   &    58 &    87 &    94.79 &   129.31 &     1.61$\pm$0.83   &&     48 &    56 &    92.36 &   \textbf{115.97} &     1.84$\pm$1.66 \\
Bz   &   131 &   132 &    94.79 &   159.88 &     1.64$\pm$0.88   &&    149 &   150 &    92.20 &   207.63 &     \textbf{1.65$\pm$1.35} \\
Bu   &   148 &   149 &    94.74 &   181.92 &     \textbf{1.61$\pm$0.80}   &&    147 &   148 &    92.20 &   206.69 &     \textbf{1.65$\pm$1.35} \\
GM   &    77 &    79 &    94.76 &   122.76 &     1.62$\pm$0.83   &&     86 &    88 &    92.20 &   158.88 &     1.69$\pm$1.41 \\
Adap &    60 &    71 &    94.82 &   \textbf{118.01} &     1.63$\pm$0.88   &&     71 &    76 &    92.22 &   157.15 &     1.66$\pm$1.38 \\
\cline{1-6}\cline{8-12}
\rule{0pt}{4ex}
 &\multicolumn{5}{c}{TC-7: 2D-Disc, SSD, H-Elas}
&&\multicolumn{5}{c}{TC-8: 2D-Hands, SSD, H-Elas}
\\
\cline{1-6}\cline{8-12}
Hp   &   778 &  3520 &    16.90 &     9.10 & N.A.        &&    655 &  3372 &    19.93 &    39.00 &     0.48$\pm$0.25 \\
Hy   &   671 &   829 &    16.93 &     4.75 & N.A.        &&    716 &   957 &    19.54 &    37.17 &     0.45$\pm$0.25 \\
Bp   &   117 &   695 &     6.02 &     3.30 & N.A.        &&    102 &   528 &    19.84 &    64.21 &     0.48$\pm$0.25 \\
Bz   &   148 &   656 &     6.02 &     3.05 & N.A.        &&     98 &   226 &    19.54 &    \textbf{26.62} &     0.46$\pm$0.25 \\
Bu   &   150 &   577 &     6.05 &     \textbf{2.89} & N.A.        &&    162 &   303 &    19.52 &    39.86 &     0.46$\pm$0.24 \\
GM   &   213 &   384 &    16.85 &     3.45 & N.A.        &&    381 &   657 &    19.52 &    95.55 &     \textbf{0.45$\pm$0.24} \\
Adap &   128 &   724 &     6.02 &     3.44 & N.A.        &&    146 &   641 &    19.65 &    78.54 &     0.46$\pm$0.25 \\
\cline{1-6}\cline{8-12}
\end{tabular}
\end{table}

\fi		
	
	\subsubsection{Algorithmic settings}
	
	For \Tu~and \ref{alg_ROSE} we use $S_k=\nabla^2\CS(x_k)$, except for the hyperelastic regularizer where $S_k$ only approximates $\nabla^2\CS(x_k)$. 
	The seed matrix in \Tu~is $B_k^{(0)}=\tau_k I + S_k$, and we compute $\tau_k$ by the adaptive approach from \cite{AMM24}, which is the best performing method of \cite{AMM24}. Since we use only one approach for $\tau_k$, we will refer to it simply as \Tu. 
	In \ref{alg_ROSE} we have $B_k^{(0)}=D_k + S_k$, and we determine the diagonal entries of $D_k$ according to \cref{eq_defgamBp}, respectively, \cref{eq_defgamBg}, which we refer to as \Ro-$\Ds$ and \Ro-$\Dg$, respectively. 
		The linear system in the two-loop recursion 
		is solved inexactly and matrix free using \minres~\cite{Paige1975} with a Jacobi preconditioner. Unless stated otherwise, \minres~is terminated after 50 iterations or when the relative residual falls below $10^{-2}$. For the 22 image registration problems under consideration, these settings were identified in \cite{AMM24} to work well for \Tu~and to outperform the preconditioned conjugate gradients method. 
			
		The image processing operations are carried out matrix free with the open-source image registration toolbox \FAIRTEXT~\cite{FAIR09}. 
		For the stopping criteria of the optimization methods we follow \cite[p.~78]{FAIR09}.
		That is, we stop if all of the conditions
		\begin{itemize}
			\item $\lvert\CJ(x_k) - \CJ(x_{k-1})\rvert \le 10^{-5}\bigl(1 + \lvert\CJ(x_0)\rvert\bigr)$, 
			\item $\|x_k - x_{k-1}\|\le 10^{-3}\bigl(1 + \|x_k\|\bigr)$,
			\item $\|\nabla\CJ(x_k)\| \leq 10^{-3}\bigl(1 + \lvert\CJ(x_0)\rvert\bigr)$
		\end{itemize}
		are satisfied. 
		We consistently use $\ell = 5$ in \ref{alg_ROSE} and \Tu. 
		We employ the Armijo line search routine from FAIR \cite{FAIR09} with parameters $LSmaxIter = 50$ and $LSreduction=10^{-4}$, where the latter corresponds to $\sigma$ in \cref{eq_armijocond}. 
		We do not consider the Wolfe--Powell line search because it does not work as well on image registration problems \cite{AMM24}.
		The remaining parameter values of Algorithms~\ref{alg_ROSE} and \Tu~are specified in \cref{tab_paramvalues}, which also
		contains parameter values for Algorithm~\ref{alg_es} that we introduce below.
				
		\begin{table}
			\footnotesize
			\caption{Parameter values for Algorithms~\ref{alg_ROSE} and \ref{alg_es}}
			\label{tab_paramvalues}
			\centering
		\begin{tabular}{c|c|c|c|c|c|c|c|c|c}
			\toprule
			\multicolumn{5}{c|}{Algorithm~\ref{alg_ROSE}} & \multicolumn{5}{c}{Algorithm~\ref{alg_es}} \\
			\midrule
			$c_s$ & $c_0$ & $C_0$ & $c_1$ & $c_2$ & $\eps_0$ & $\eps_1$ & $\eta_0$ & $\eta_1$ & $\eta_2$\\
			\midrule
			$10^{-9}$ & $10^{-6}$ & $10^6$ & $10^{-6}$ & $1$ & $10^{-3}$ & $10^{-4}$ & $10$ & $30$ & $50$\\
			\bottomrule
		\end{tabular}
		\end{table}
		
		\subsubsection{Evaluation measures}	
		
		We select run-time and solution accuracy as the main criteria to evaluate the performance of the algorithms, where the solution accuracy is measured with target registration error  (TRE) \cite{Fitzpatrick2001} discussed in \Cref{sec:imgReg}. 
		
		To visualize the performance we use the performance profiles of Dolan and Moré \cite{Dolan2002} which allow to compare the performance of several optimization methods on a given set of problems with respect to a performance metric (e.g., run-time). Denote by $S$ the set of methods, by $P$ the set of problems, and by $t_{p,s}\in(0,\infty]$ the value that method $s\in S$ achieves on problem $p\in P$ in the performance metric, where a smaller value of $t_{p,s}$ is better and $t_{p,s}=\infty$ indicates that algorithm $s$ did not solve $p$. The performance profile for $s$ is the function $\rho_s:[1,\infty)\rightarrow [0,1]$ given by 
			\begin{equation*}
				\rho_{s}(\tau) = \frac{\left\lvert\bigl\{p\in P: \, r_{p,s} \leq \tau\bigr\}\right\rvert}{\lvert P\rvert}, \qquad \text{where } r_{p,s} = \frac{t_{p,s}}{\min\bigl\{t_{p,\sigma}: \, \sigma \in S\bigr\}}.
			\end{equation*}
			We see that $\rho_s$ is the cumulative distribution function with respect to the performance metric $t$. 
			Note that $\tau$ in $\rho_{s}(\tau)$ is not related to the scaling factor $\tau$ used for seed matrices, but both are standard notation.
		
		\subsubsection{Results}
		
		\paragraph{Experimental comparison of different choices for \texorpdfstring{$\mathbf{D_k}$}{D_k}}
		
		\begin{table}
			\footnotesize
			\caption{Diagonal approximation schemes and choice of bounds $a,b$ for $\hat T=[a,b]\cap T$. 
				We work with the absolute values of $\tau^\Bp$ and $\tau^\Bz$ to account for the case $s^T z<0$. 
				It is easy to see that $|\tau^\Bp| \leq |\tau^\Bz|$. 
				We point out that the parameters for the cautious updates are chosen such that $\omega^l \ll |\tau^\Bp|$ and $\omega^u \gg |\tau^\Bz|$.
				}
			\label{tab_diag_bounds}
			\centering
			\begin{tabular}{c|c|c|c}
				\toprule
				No. &
				diagonal approximation &
				lower bound $a$&
				upper bound $b$\\
				\midrule
				1. & \multirow{3}{*}{\Ro-$\Ds$: Formula \cref{eq_defgamBp}} 
				& $\omega^l$      & $\omega^u$ \\
				2. & 					  									     & $\omega^l$       &  $\min(|\tau^\Bz|, \omega^u)$\\
				3. & 					  									     & $\max(|\tau^\Bp|, \omega^l)$\ 				&  $\min(|\tau^\Bz|, \omega^u)$\ \\
				\midrule
				4. & \multirow{3}{*}{\Ro-$\Dg$: Formula \cref{eq_defgamBg}} 
				& $\omega^l$      & $\omega^u$ \\
				5. & 					  																  & $\omega^l$       &  $\min(|\tau^\Bz|, \omega^u)$\\
				6. & 					  																  & $\max(|\tau^\Bp|, \omega^l)$\ 				&  $\min(|\tau^\Bz|, \omega^u)$\ \\
				\bottomrule
			\end{tabular}
		\end{table}
	
		\pgfplotsset{myaxisstyle/.style={
		axis y line=left,
		scale = 0.55,
		title style={at={(0.8,1.7)}},
		axis lines = left,
		xlabel = $\tau$,
		ymin=0, ymax=1.1,
		ylabel style={at={(-0.3,1)}},
		xlabel style={at={(1,-0.3)}},
		minor tick num=2,
		grid=both,
		grid style={line width=.1pt, draw=gray!05},
		cycle list name=color list,
		line width=0.8pt,
		x tick label style={
			/pgf/number format/.cd,
			fixed,
			precision=3,
			/tikz/.cd
		},
}}

\begin{figure*}[ht!]
    \centering
    \begin{minipage}[b]{\linewidth}
    	 \centering
    	(i) \Ro-$\Ds$ vs. \Tu
    \end{minipage}
    \begin{minipage}[b]{0.32\linewidth}
    \pgfplotstableread[col sep=comma]{data/Ds_time.csv}\loadedtable
    \begin{tikzpicture}[font=\footnotesize,inner sep=2pt, outer sep=0pt]
      \begin{axis}[
        myaxisstyle,  
        title = {(a) run-time},
        ylabel = $\rho(\tau)$,
        xmin=1, xmax=5,
      ]
      \addplot[red] table[x=x, y=c1]{\loadedtable};
      \addplot[violet] table[x=x, y=c2]{\loadedtable};
      \addplot[teal] table[x=x, y=c3]{\loadedtable};
      \addplot[cyan] table[x=x, y=c4]{\loadedtable};
      \end{axis}
    \end{tikzpicture}
    \end{minipage}
    \hfill
    \begin{minipage}[b]{0.32\linewidth}
   		\pgfplotstableread[col sep=comma]{data/Ds_mTRE.csv}\loadedtable
    	\begin{tikzpicture}[font=\footnotesize,inner sep=2pt, outer sep=0pt]
    		\begin{axis}[
    			myaxisstyle,  
    			title = {(b) accuracy},
    			xmin=1, xmax=1.075,
    			]
				\addplot[red] table[x=x, y=c1]{\loadedtable};
				\addplot[violet] table[x=x, y=c2]{\loadedtable};
				\addplot[teal] table[x=x, y=c3]{\loadedtable};
				 \addplot[cyan] table[x=x, y=c4]{\loadedtable};
    		\end{axis}
    	\end{tikzpicture}
    \end{minipage}
    \hfill
    \begin{minipage}[b]{0.32\linewidth}
	\pgfplotstableread[col sep=comma]{data/Ds_red.csv}\loadedtable
	\begin{tikzpicture}[font=\footnotesize,inner sep=2pt, outer sep=0pt]
	\begin{axis}[
		myaxisstyle,  
		title = {(c) objective value reduction},
		xmin=1, xmax=1.1,
		]
		\addplot[red] table[x=x, y=c1]{\loadedtable};
		\addplot[violet] table[x=x, y=c2]{\loadedtable};
		\addplot[teal] table[x=x, y=c3]{\loadedtable};
		 \addplot[cyan] table[x=x, y=c4]{\loadedtable};
	\end{axis}
	\end{tikzpicture}
    \end{minipage}
    \begin{minipage}[b]{\linewidth}
    	\centering
		(ii) \Ro-$\Dg$ vs. \Tu
	\end{minipage}
	\begin{minipage}[b]{0.32\linewidth}
		\pgfplotstableread[col sep=comma]{data/Dg_time.csv}\loadedtable
		\begin{tikzpicture}[font=\footnotesize,inner sep=2pt, outer sep=0pt]
		\begin{axis}[
		myaxisstyle,  
		title = {(a) run-time},
		ylabel = $\rho(\tau)$,
		xmin=1, xmax=5,
		]
		\addplot[red] table[x=x, y=c1]{\loadedtable};
		\addplot[violet] table[x=x, y=c2]{\loadedtable};
		\addplot[teal] table[x=x, y=c3]{\loadedtable};
		 \addplot[cyan] table[x=x, y=c4]{\loadedtable};
		\end{axis}
		\end{tikzpicture}
	\end{minipage}
	\hfill
	\begin{minipage}[b]{0.32\linewidth}
		\pgfplotstableread[col sep=comma]{data/Dg_mTRE.csv}\loadedtable
		\begin{tikzpicture}[font=\footnotesize,inner sep=2pt, outer sep=0pt]
		\begin{axis}[
		myaxisstyle,  
		title = {(b) accuracy},
		xmin=1, xmax=1.075,
		]
		\addplot[red] table[x=x, y=c1]{\loadedtable};
		\addplot[violet] table[x=x, y=c2]{\loadedtable};
		\addplot[teal] table[x=x, y=c3]{\loadedtable};
		 \addplot[cyan] table[x=x, y=c4]{\loadedtable};
		\end{axis}
		\end{tikzpicture}
	\end{minipage}
	\hfill
	\begin{minipage}[b]{0.32\linewidth}
		\pgfplotstableread[col sep=comma]{data/Dg_red.csv}\loadedtable
		\begin{tikzpicture}[font=\footnotesize,inner sep=2pt, outer sep=0pt]
		\begin{axis}[
		myaxisstyle,  
		title = {(c) objective value reduction},
		xmin=1, xmax=1.1,
		]
		\addplot[red] table[x=x, y=c1]{\loadedtable};
		\addplot[violet] table[x=x, y=c2]{\loadedtable};
		\addplot[teal] table[x=x, y=c3]{\loadedtable};
		 \addplot[cyan] table[x=x, y=c4]{\loadedtable};
		\end{axis}
		\end{tikzpicture}
	\end{minipage}
    \begin{minipage}[b]{\linewidth}
   	\centering
   	\begin{tikzpicture}
   	\draw[red,solid] (0.5,0) -- (1.0,0) node[right] {$[\omega^l, \omega^u]$} ;
   	\draw[violet] (3.0,0) -- (3.5,0) node[right] {$[\omega^l, |\tau^z|]$};
   	\draw[teal] (5.5,0) -- (6.0,0) node[right] {$[|\tau^s|, |\tau^z|]$};
   	\draw[cyan] (8.0,0) -- (8.5,0) node[right] {\Tu} ;
   	\end{tikzpicture}
   \end{minipage}
    \caption{Performance profiles for different variants of \Ro~compared with \Tu. 
    	Each variant of \Ro~is combined with three different choices of $\hat T_{k+1}$. 
    }
    \label{fig:perf_diag}
\end{figure*}

		We pair the two schemes \Ro-$\Ds$ and \Ro-$\Dg$ 
		with various restrictions of $T_{k+1}$. 
		Here, by restriction we mean that we choose $\lambda(D_{k+1}),\Lambda(D_{k+1})$ in \Cref{line_choiceofDkp1} of \ref{alg_ROSE} from a sub-interval $\hat T_{k+1}:=[a_{k+1},b_{k+1}]\cap T_{k+1}$ 
		(this is compatible with the algorithm since it still ensures $\lambda(D_{k+1}),\Lambda(D_{k+1})\in T_{k+1}$). 
		Specifically, we are interested in sub-intervals that use $\tau_{k+1}^\Bp$ as lower and $\tau_{k+1}^\Bz$ as upper bound 
		because this guarantees that the spectrum of $D_{k+1}$ is related to that of the (average) Hessian of the data-fidelity term, cf.~\cref{lem_spectrumavHessDk}. 
		Omitting the index $k+1$, we detail the different combinations of lower and upper bounds that we use for $\hat T_{k+1}$ in \cref{tab_diag_bounds}. 
		
		\Cref{fig:perf_diag} shows that all variants of \ref{alg_ROSE} are either faster or at least similar to \Tu. 
		All variants of \Ro-$\Dg$ outperform the adaptive version of \Tu, while only one variant of \Ro-$\Ds$ is faster than \Tu.  
		
		In both \Ro-$\Ds$ and \Ro-$\Dg$, the variant with lower bound $|\tau^\Bp|$ is the fastest but yields the lowest accuracy. Less run-time is attributable to lower objective value reduction. The other two variants with lower bound $\omega^l$ are almost identical in performance, but the one with upper bound $|\tau^\Bz|$ has a slight advantage in terms of accuracy and run-time, which is why we focus on improving this variant's run-time by managing the accuracy of the linear solver over the iterations.
						
		\paragraph{Effect of linear solver on run-time performance}
	
		\pgfplotsset{myaxisstyle/.style={
		axis y line=left,
		scale = 0.55,
		title style={at={(0.8,1.7)}},
		axis lines = left,
		xlabel = $\tau$,
		ymin=0, ymax=1.1,
		ylabel style={at={(-0.3,1)}},
		xlabel style={at={(1,-0.3)}},
		minor tick num=2,
		grid=both,
		grid style={line width=.1pt, draw=gray!05},
		cycle list name=color list,
		line width=0.8pt,
		x tick label style={
			/pgf/number format/.cd,
			fixed,
			precision=3,
			/tikz/.cd
		},
}}

\begin{figure*}[ht!]
    \centering
    \begin{minipage}[b]{0.32\linewidth}
    \pgfplotstableread[col sep=comma]{data/DsDg_time.csv}\loadedtable
    \begin{tikzpicture}[font=\footnotesize,inner sep=2pt, outer sep=0pt]
      \begin{axis}[
        myaxisstyle,  
        title = {(a) run-time},
        ylabel = $\rho(\tau)$,
        xmin=1, xmax=4,
      ]
      \addplot[red] table[x=x, y=c1]{\loadedtable};
      \addplot[violet] table[x=x, y=c2]{\loadedtable};
      \addplot[cyan] table[x=x, y=c3]{\loadedtable};
      \end{axis}
    \end{tikzpicture}
    \end{minipage}
    \hfill
    \begin{minipage}[b]{0.32\linewidth}
   		\pgfplotstableread[col sep=comma]{data/DsDg_feval.csv}\loadedtable
    	\begin{tikzpicture}[font=\footnotesize,inner sep=2pt, outer sep=0pt]
    		\begin{axis}[
    			myaxisstyle,  
    			title = {(b) function evaluations},
    			xmin=1, xmax=4,
    			]
				\addplot[red] table[x=x, y=c1]{\loadedtable};
				\addplot[violet] table[x=x, y=c2]{\loadedtable};
				\addplot[cyan] table[x=x, y=c3]{\loadedtable};
    		\end{axis}
    	\end{tikzpicture}
    \end{minipage}
    \hfill
    \begin{minipage}[b]{0.32\linewidth}
	\pgfplotstableread[col sep=comma]{data/DsDg_minres.csv}\loadedtable
	\begin{tikzpicture}[font=\footnotesize,inner sep=2pt, outer sep=0pt]
	\begin{axis}[
		myaxisstyle,  
		title = {(c) MINRES iterations},
		xmin=1, xmax=5,
		]
		\addplot[red] table[x=x, y=c1]{\loadedtable};
		\addplot[violet] table[x=x, y=c2]{\loadedtable};
		\addplot[cyan] table[x=x, y=c3]{\loadedtable};
	\end{axis}
	\end{tikzpicture}
    \end{minipage}
    \begin{minipage}[b]{\linewidth}
   	\centering
   	\begin{tikzpicture}
   	\draw[red] (0.5,0) -- (1.0,0) node[right] {\Ro-$\Ds$: $[\omega^l, |\tau^z|]$};
   	\draw[violet] (5,0) -- (5.5,0) node[right] {\Ro-$\Dg$: $[\omega^l, |\tau^z|]$};
   	\draw[cyan] (9.5,0) -- (10,0) node[right] {\Tu};
   	\end{tikzpicture}
   \end{minipage}
    \caption{Performance profiles comparing the best variant of \Ro-$\Ds$ and \Ro-$\Dg$ to \Tu.
    }
    \label{fig:perf_minres}
\end{figure*}

		While a diagonal choice of $D_k$ infuses more information into the structured \LBFGS~method compared to a scaled identity, the run-time is below the anticipated level. \Cref{fig:perf_minres}~(b) confirms that the diagonal schemes require fewer function evaluations than the scaled identity, but indicates that for the diagonal choice the linear solver requires a much higher number of total iterations. For instance, it requires more than double the amount of iterations for $70\%$ of the problems. This is the main reason for its underperformance.
		
		Since \Ro-$\Dg$ is superior to \Ro-$\Ds$, 
		which is consistent with the literature on structured \LBFGS~\cite{JBES04,KR13,AMM24},
		we exclude \Ro-$\Ds$ from the remaining experiments on image registration.
		
		\paragraph{Improved run-time performance with earlier stopping}
		
	    \pgfplotsset{myaxisstyle/.style={
		axis y line=left,
		scale = 0.55,
		title style={at={(0.8,1.7)}},
		axis lines = left,
		xlabel = $\tau$,
		ymin=0, ymax=1.1,
		ylabel style={at={(-0.3,1)}},
		xlabel style={at={(1,-0.3)}},
		minor tick num=2,
		grid=both,
		grid style={line width=.1pt, draw=gray!05},
		cycle list name=color list,
		line width=0.8pt,
		x tick label style={
			/pgf/number format/.cd,
			fixed,
			precision=3,
			/tikz/.cd
		},
}}

\begin{figure*}[ht!]
    \centering
    \begin{minipage}[b]{0.32\linewidth}
    \pgfplotstableread[col sep=comma]{data/MR_DsDg_time.csv}\loadedtable
    \begin{tikzpicture}[font=\footnotesize,inner sep=2pt, outer sep=0pt]
      \begin{axis}[
        myaxisstyle,  
        title = {(a) run-time},
        ylabel = $\rho(\tau)$,
        xmin=1, xmax=4,
      ]
      \addplot[violet] table[x=x, y=c1]{\loadedtable};
      \addplot[orange] table[x=x, y=c2]{\loadedtable};
      \addplot[cyan] table[x=x, y=c3]{\loadedtable};
      \addplot[teal] table[x=x, y=c4]{\loadedtable};
      \end{axis}
    \end{tikzpicture}
    \end{minipage}
    \hfill
    \begin{minipage}[b]{0.32\linewidth}
   		\pgfplotstableread[col sep=comma]{data/MR_DsDg_mTRE.csv}\loadedtable
    	\begin{tikzpicture}[font=\footnotesize,inner sep=2pt, outer sep=0pt]
    		\begin{axis}[
    			myaxisstyle,  
    			title = {(b) accuracy},
    			xmin=1, xmax=1.075,
    			]
				\addplot[violet] table[x=x, y=c1]{\loadedtable};
				\addplot[orange] table[x=x, y=c2]{\loadedtable};
				\addplot[cyan] table[x=x, y=c3]{\loadedtable};
				\addplot[teal] table[x=x, y=c4]{\loadedtable};
    		\end{axis}
    	\end{tikzpicture}
    \end{minipage}
    \hfill
    \begin{minipage}[b]{0.32\linewidth}
	\pgfplotstableread[col sep=comma]{data/MR_DsDg_minres.csv}\loadedtable
	\begin{tikzpicture}[font=\footnotesize,inner sep=2pt, outer sep=0pt]
	\begin{axis}[
		myaxisstyle,  
		title = {(c) MINRES iterations},
		xmin=1, xmax=5,
		]
		\addplot[violet] table[x=x, y=c1]{\loadedtable};
		\addplot[orange] table[x=x, y=c2]{\loadedtable};
		\addplot[cyan] table[x=x, y=c3]{\loadedtable};
		\addplot[teal] table[x=x, y=c4]{\loadedtable};
	\end{axis}
	\end{tikzpicture}
    \end{minipage}
    \begin{minipage}[b]{\linewidth}
   	\centering
   	\begin{tikzpicture}
   	\draw[violet] (0.5,0) -- (1.0,0) node[right] {\Ro-$\Dg$: $[\omega^l, |\tau^z|]$};
   	\draw[orange] (5.0,0) -- (5.5,0) node[right] {\Ro-$\Dg$-ES: $[\omega^l, |\tau^z|]$};
   	\draw[cyan] (10,0) -- (10.5,0) node[right] {\Tu};
   	\draw[teal] (12.5,0) -- (13,0) node[right] {\Tu-ES};
   	\end{tikzpicture}
   \end{minipage}
    \caption{Performance profiles for \Ro-$\Dg$ and \Tu~with and without Algorithm~\ref{alg_es}.
    }
    \label{fig:perf_earlystop}
\end{figure*}

		We reduce the computational time required by the linear solver by stopping earlier. Specifically, instead of allowing a maximum of 50 iterations as before, we switch between 10, 30 and 50 iterations, respectively, depending on whether \Ro~makes good progress or not.
		The details are provided below in Algorithm~\ref{alg_es}.
		
		\begin{algorithm2e}[h!]
			\SetAlgoRefName{ES}
			\DontPrintSemicolon
			\caption{Early stopping criteria for linear solver; here, $\iota_k$ is the number of allowed \minres~iterations at the $(k+1)$-th iteration of \ref{alg_ROSE}}
			\label{alg_es}
			\KwIn{$0 < \eps_1 < \eps_0 << 1$, $0 < \eta_0 < \eta_1 < \eta_2$}
			\lIf{$|\CJ(x_{k+1})-\CJ(x_k)| \leq \eps_1 |\CJ(x_k)|$\label{line_eps1}}{let $\iota_k := \eta_2$ \tcp*[f]{small progress $\to$~many iterations}
			}
			\lElseIf{$|\CJ(x_{k+1})-\CJ(x_k)| \leq \eps_0 |\CJ(x_k)|$}{let $\iota_k := \eta_1$ \label{line_eta1}\tcp*[f]{medium progress $\to$~in between}
			}
			\lElse{let $\iota_k := \eta_0$\tcp*[f]{large progress $\to$~few iterations}
			}
		\end{algorithm2e}
				
		The underlying idea in Algorithm~\ref{alg_es} is that while far away from a local minimum, a crude approximation of the search direction is enough to obtain a sufficient decrease in the objective value. Therefore, in this case \minres~stops after only $\eta_0$ iterations, where $\eta_0=10$ in our experiments. As the rate of change in the objective function value decreases, the maximal number of \minres~iterations is increased to obtain a more accurate estimate of the search direction.
		
		\Cref{fig:perf_earlystop} compares \Ro-$\Dg$ to \Tu, both with and without Algorithm~\ref{alg_es}.
		As displayed, \ref{alg_ROSE} is faster than \Tu~on approximately $80\%$ of the problems with almost similar performance in terms of accuracy. In contrast to \ref{alg_ROSE}, \Tu~does not benefit from using \ref{alg_es}, suggesting that the lower approximation quality of the seed matrix in \Tu~combined with even earlier stopping produces descent directions of poor quality. 
		
		\paragraph{Overall performance} \Cref{tab:perf} reports the total run-time and average target registration error on the 22 image registration problems. The two variants of \Ro-$\Dg$ clearly outperform the two variants of \Tu. In particular, the best method \Ro-$\Dg$-ES improves meaningfully over the best variant of \Tu. We recall from \cite{AMM24} that this variant of \Tu~outperforms standard \LBFGS~by a wide margin on the 22 problems under consideration.
		
		\begin{table}[h!]
			\footnotesize
			\caption{Performance table}
			\label{tab:perf}
			\centering
			\begin{tabular}{ccccc}
				\toprule
				Measures & \Ro-$\Dg$ & \Ro-$\Dg$-ES & \Tu & \Tu-ES \\
				\midrule
				total run-time (sec.) & 624 & \textbf{614} & 904 & 949 \\
				\midrule
				average TRE & 0.537 & \textbf{0.536} & 0.538 & 0.542 \\
				\bottomrule
			\end{tabular}
		\end{table}

		\paragraph{Convergence rate}
		

\newcommand{\cyclelist}{
	{violet},
	{orange},
	{cyan},
	{teal}
}

\pgfplotsset{
	compat=1.15,
	my axis style/.style={
		xlabel = $k$,
		xscale = 0.5,
		yscale = 0.7,
		axis y line=left, axis x line =bottom,
		cycle list/.expanded={\cyclelist},
		every axis title/.style={above, at={(current axis.north)}, yshift = 0cm},
		every axis y label/.style={at={(current axis.west)},xshift=-1cm, rotate=90},
		every axis x label/.style={at={(current axis.south)},yshift=-0.7cm},
	},
}

\newcommand{\myBoxplot}[4]{
	\begin{tikzpicture}
	\begin{axis}[my axis style,#3,ymode=log,
	]
	\addplot +[mark=none,solid] table[x index=0, y index=#4] {data/#1_1.dat};
	\addplot +[mark=none,solid] table[x index=0, y index=#4] {data/#1_2.dat};
	\addplot +[mark=none,solid] table[x index=0, y index=#4] {data/#1_3.dat};
	\addplot +[mark=none,solid] table[x index=0, y index=#4] {data/#1_4.dat};
	\end{axis}
	\end{tikzpicture}
}

\newcommand{\myboxplotLegend}{
	\centering
	\begin{tikzpicture}
	\draw[gray,solid] (1.5,0) -- (2,0) node[right] {$\Dg: [\omega^l, |\tau^z|]$} ;
	\draw[blue] (5,0) -- (5.5,0) node[right] {$\Dg: [\omega^l, |\tau^z|]$ - ES};
	\draw[magenta] (8.5,0) -- (9,0) node[right] {adap};
	\draw[cyan] (-1,-1) -- (-0.5,-1) node[right] {adap-ES};
	\end{tikzpicture}
}

\begin{figure}[ht!]
	
	\centering
	\def\expname{conv}
	\begin{tabular*}{\textwidth}{c @{\extracolsep{\fill}}  c @{\extracolsep{\fill}} c @{\extracolsep{\fill}} c}
		\myBoxplot{\expname}{3}{title = {(i) $\|x_k - x_{k-1}\|$}}{1} & 
		\myBoxplot{\expname}{3}{title = {(ii) $|\CJ(x_k)-\CJ(x_{k-1})|$}}{2}  & 
		\myBoxplot{\expname}{3}{title = {(iii) $\|\nabla \CJ(x_k)\|$}}{3}
	\end{tabular*}

	\begin{tikzpicture}
		\draw[violet] (0.5,0) -- (1.0,0) node[right] {\Ro-$\Dg$: $[\omega^l, |\tau^z|]$} ;
		\draw[orange] (5.0,0) -- (5.5,0) node[right] {\Ro-$\Dg$-ES: $[\omega^l, |\tau^z|]$};
		\draw[cyan] (10,0) -- (10.5,0) node[right] {\Tu};
		\draw[teal] (12.5,0) -- (13,0) node[right] {\Tu-ES};
	\end{tikzpicture}

	\caption{Convergence behavior of \Ro~and \Tu~on one image registration problem.
	The diagonal choices for $D_k$ used in \Ro~result in higher rates of convergence compared to the scalar multiple of the identity used in \Tu.}
	\label{fig:conv}
\end{figure}
		
		In \Cref{fig:conv} we assess the rate of convergence of \ref{alg_ROSE} in comparison to \Tu. 
		Unsurprisingly, the diagonal choice of $D_k$ used in \Ro~enables much faster convergence than the scalar multiple of the identity used in \Tu. 	
		
		\paragraph{Visualization of the registration}
		
		\newcommand{\igg}[1]{\includegraphics[width=0.19\linewidth]{figures/#1.png}}

\begin{figure}
	\setlength{\tabcolsep}{2pt}
	\renewcommand{\arraystretch}{1}
	\footnotesize
	\begin{tabular}{cccccc}
		 & Template & Reference & Overlayed T \& R & \multicolumn{2}{c}{Overlayed $T(\phi)$ \& R} \\
		 &   (T)      &  (R)      &                                  & \Ro-$\Dg$ & \Tu \\
		\rotatebox{90}{TC-18: 3D-Lung} & \igg{TC13R} & \igg{TC13T} & \igg{TC13RT} & \igg{TC13DgRTy} & \igg{TC13AdapRTy} \\
		& & & & TRE: $1.52$, RED: $0.946$ & TRE: $1.53$, RED: $0.947$  \\
		\rotatebox{90}{TC-9: 2D-Hands}& \igg{TC21R} & \igg{TC21T} & \igg{TC21RT} & \igg{TC21DgRTy} & \igg{TC21AdapRTy} \\
		& & & & TRE: $0.74$, RED: $0.712$ & TRE: $0.72$, RED: $0.715$ \\
		\rotatebox{90}{TC-17: 3D-Brain}& \igg{TC18R} & \igg{TC18T} & \igg{TC18RT} & \igg{TC18DgRTy} & \igg{TC18AdapRTy} \\
		& & & & RED: $0.4260$ & RED: $0.4247$ \\
		\rotatebox{90}{TC-15: 2D-MRI-Head}& \igg{TC12R} & \igg{TC12T} & \igg{TC12RT} & \igg{TC12DgRTy} & \igg{TC12AdapRTy} \\
		& & & & RED: $0.9937$ & RED: $0.9938$ \\
		\rotatebox{90}{TC-13: 2D-PET-CT}& \igg{TC10R} & \igg{TC10T} & \igg{TC10RT} & \igg{TC10DgRTy} & \igg{TC10AdapRTy} \\
		& & & & RED: $0.9896$ & RED: $0.9896$
	\end{tabular}
	\caption{Registration results for the test cases (TC) 18, 9, 17, 15 and 13; see \Cref{tab:IRproblems}. The first two columns show Template (T) and Reference (R) images, the third column and the last two columns show the difference between T and R before and after registration, respectively. Solution accuracy measure TRE is available only for TC-18 and TC-9, where \Ro-$\Dg$ and \Tu~achieve similar values. 
	The relative objective value reductions (RED) of \Ro-$\Dg$ and \Tu~are consistently close to each other. 
	This indicates that the solution quality of \Ro-$\Dg$ is comparable to that of \Tu~although \Ro-$\Dg$ requires significantly less run-time on average.}
	\label{fig_imgreg}
\end{figure}

		\Cref{fig_imgreg} displays the two registration results with the largest TRE for \Ro-$\Dg$ and \Tu. 
		It also depicts registration results on three more datasets where landmarks are not available to measure TRE. 
		The objective value reduction in \cref{fig_imgreg} is very similar for \Ro-$\Dg$ and \Tu, which means that the 
		final objective values are close to each other. 
		This suggests that the run-time improvement of \Ro-$\Dg$ over \Tu~does not affect the quality of the registration. 

	\subsection{Quadratic problems}\label{sec:quad}

		We turn to the academic setting of strictly convex quadratics to illustrate what the ideal problem type is for \ref{alg_ROSE}. 
		This helps to explain the outperformance of \Ro~over \Tu~observed for image registration in \Cref{sec_rlprobs}.
		We study how the regularization parameter and the number of \LBFGS~update vectors affect convergence of \ref{alg_ROSE}, 
		and we validate the results of \Cref{sec_finconv}.
		
		\subsubsection{Problem under consideration}
		
		We seek the unique minimizer $\xopt:=(1,1,\ldots,1)^T\in\R^{16}$ of the strictly convex quadratic 
		$\CJ(x):=0.5(x-\xopt)^T \left[D+\alpha S\right](x-\xopt)$, where
		$D$ and $S$ are SPD and $\alpha>0$. 
		We let $D$ be the diagonal matrix $D_{jj}:=\exp(-j)$ with exponentially decaying eigenvalues. 
		This matrix is ill-conditioned with a condition number around $10^7$, 
		reflecting the Hessian of a typical data-fidelity term in inverse problems.
		For $S$ we use the classical five-point stencil finite difference discretization of the Laplacian with zero boundary conditions on the unit square \cite[Section~1.4.3]{Ba16}. 
		We investigate three setups: \emph{weakly} ($\alpha=10^{-5}$), \emph{mildly} ($\alpha=10^{-3}$) and \emph{strongly} ($\alpha = 10^{-1}$) regularized problems. 
		
		\subsubsection{Algorithmic settings} 
		
		We use $x_0=0$, $S_k=\alpha S$, and we terminate if $\|\nabla \CJ(x_k)\| \leq 10^{-13}$. 
		The linear system in the two-loop recursion is solved with \textsc{Matlab's} backslash.
		We use the same algorithmic parameters for \ref{alg_ROSE}~and \Tu~and the same three variants of $\hat T_{k+1}$ as in \Cref{sec_rlprobs}, cf.~\Cref{tab_paramvalues,tab_diag_bounds}. 
		\Ro-$\Ds$ and \Ro-$\Dg$ agree in this example since the formulas \cref{eq_defgamBp} and \cref{eq_defgamBg} yield identical coefficients, cf.~\cref{lem_goodHessianapprox}~1), so we simply call it \ref{alg_ROSE}.
		
		\subsubsection{Results}
		
		Iteration numbers, average number of line search steps and run-times are summarized in \Cref{tab:quadPrb}. 		
		As expected based on the considerations of \Cref{sec_finconv}, the iteration numbers decrease for increasing $\ell$, except for \ref{alg_ROSE}~when going from $\ell=0$ to $\ell=3$ in case of smaller $\alpha$ and a sufficiently large interval $\hat T_{k+1}$, cf. \cref{rem_fconvwp}~2). 
		\Cref{tab:quadPrb} confirms that, as proved in \Cref{lem_terminationaftertwostepsv3}, the problem structure implies that \ref{alg_ROSE}~with $\ell=0$ terminates after two iterations if $\alpha$ is small enough and $\hat T_{k+1}$ is large enough, cf. also \Cref{rem_fconvwp}~1). 
		
		We repeat that in this section we have deliberately chosen a problem structure that is favorable for \ref{alg_ROSE}~($\CD$ and $\CS$ are strongly convex quadratics, $\CD$ has a diagonal Hessian), since our aim is to illustrate in a clear fashion that the better Hessian approximation that \ref{alg_ROSE}~generates can reduce the number of iterations significantly in comparison to \Tu, enhancing the practical performance. 
		Indeed, \ref{alg_ROSE}~consistently requires fewer iterations and less run-time than \Tu. The variants of \ref{alg_ROSE}~with $[\omega^l,\omega^u]$ and $[\omega^l,\lvert\tau^\Bz\rvert]$ are more effective than with $[\lvert\tau^\Bp\rvert,\lvert\tau^\Bz\rvert]$, which is in line with the results from \Cref{sec_rlprobs}. In consequence, the outperformance in run-time of \ref{alg_ROSE}~over \Tu~displayed in \Cref{tab:quadPrb} is substantial for the two variants with larger intervals. In this respect, we recall that the larger the interval, the better $D_{k+1}$ can potentially approximate the spectrum of $\nabla^2\CD$, which is particularly helpful if $\nabla^2\CD$ is positive definite and ill-conditioned. 
		
		In this example, by obtaining a good approximation of $\nabla^2\CD$, \ref{alg_ROSE}~is able to substantially outperform \Tu~for weak and mild regularization. As an aside we note that like \Tu, standard \LBFGS~struggles on such poorly conditioned problems.
		All in all, \ref{alg_ROSE}~is robust across the full range of regularization values $\alpha$. 
		
		\begin{table}[h!]
\footnotesize
\centering
\caption{Comparison of \ref{alg_ROSE}~and \Tu~on quadratic problems with different regularization strengths $\alpha$ and different numbers of update vectors $\ell$. 
\ref{alg_ROSE}~clearly outperforms \Tu~in run-time.
%
}\label{tab:quadPrb}

\setlength{\tabcolsep}{4.5pt}
\newcolumntype{R}{>{\raggedleft\arraybackslash}p{6mm}}
\begin{tabular}{l*{5}{R}c@{\hskip2mm}*{5}{R}c@{\hskip2mm}*{5}{R}}
\hline
\multicolumn{18}{c}{number of iterations}	
\\
\hline
 &\multicolumn{5}{c}{$\alpha=10^{-5}$}
&&\multicolumn{5}{c}{$\alpha=10^{-3}$}
&&\multicolumn{5}{c}{$\alpha=10^{-1}$}
\\
	Method\enspace/\enspace$\ell$ 	&$0$     	&$3$     	&$5$     	&$10$    	&$\infty$  	
	&        	&$0$     	&$3$     	&$5$     	&$10$    	&$\infty$  	
	&        	&$0$     	&$3$     	&$5$     	&$10$    	&$\infty$  
\\
\cline{1-6}
\cline{8-12}
\cline{14-18}
 \Tu  &  5000 & 3088 &   2222 &  1106 &   576  && 5000 &  551 &  409 &  177 &  62  &&  284 & 35 &  35 & 22 & 19 \\
 \Ro: $[\omega^l, \omega^u]$ & 2 & 2 & 2 & 2 & 2 && 2 & 2 & 2 & 2 & 2 && 3 & 3 & 3 & 3 & 3\\
 \Ro: $[\omega^l, |\tau^z|]$ & 5 & 12 & 9 & 8 & 8 && 6 & 12 & 9 & 8 & 8 && 10 & 7 & 6 & 6 & 6\\
 \Ro: $[|\tau^s|, |\tau^z|]$ & 107 & 51 & 46 & 47 & 38 && 85 & 47 & 31 & 32 & 23 && 37 & 25 & 20 & 15 & 14\\
\hline
\multicolumn{18}{c}{average number of line searches per iteration}	
\\
\hline
 \Tu  &  1.01 & 1.14 &   1.15 &  1.16 &   1.15  && 1.00 &  1.11 &  1.11 &  1.22 &  1.39  &&  1.02 & 1.09 &  1.06 & 1.05 & 1.00 \\
\Ro: $[\omega^l, \omega^u]$ & 1.00 & 1.00 & 1.00 & 1.00 & 1.00 && 1.00 & 1.00 & 1.00 & 1.00 & 1.00 && 1.00 & 1.00 & 1.00 & 1.00 & 1.00\\
\Ro: $[\omega^l, |\tau^z|]$ &1.40 & 1.08 & 1.00 & 1.00 & 1.00 && 1.33 & 1.08 & 1.00 & 1.00 & 1.00 && 1.10 & 1.00 & 1.00 & 1.00 & 1.00\\
\Ro: $[|\tau^s|, |\tau^z|]$ & 1.62 & 1.29 & 1.39 & 1.15 & 1.03 && 1.25 & 1.11 & 1.03 & 1.09 & 1.00 && 1.05 & 1.04 & 1.05 & 1.00 & 1.00\\
\hline 
\multicolumn{18}{c}{run-time (in milliseconds)}	
\\
\hline
 \Tu  &  543.2 & 340.5 & 248.1 &  127.9 &  69.9  && 542.5 & 61.9 &  46.4 & 21.7 &  8.7  &&  31.7 & 5.2 &  5.1 & 3.7 & 3.5\\
\Ro: $[\omega^l, \omega^u]$ & 2.9 & 1.6 & 1.5 & 1.5 & 1.5 && 1.6 & 1.5 & 1.5 & 1.4 & 1.4 && 1.9 & 1.6 & 1.6 & 1.6  & 1.6\\
\Ro: $[\omega^l, |\tau^z|]$ & 2.3 & 3.0 & 2.5 & 2.4 & 2.3 && 2.1 & 2.9 & 2.4 & 2.3 & 2.3 && 2.6 & 2.2 & 2.1 & 2.0 & 2.0\\
\Ro: $[|\tau^s|, |\tau^z|]$ & 16.6 & 8.5 & 7.8 & 7.9 & 6.7 && 13.5 & 8.1 & 5.4 & 5.8 & 4.5 && 6.2 & 4.7 & 4.0 & 3.3 & 3.2\\
\hline
\end{tabular}
\end{table}	


\section{Conclusions}\label{sec:conclusion}
	
	We have presented \ref{alg_ROSE}, a structured \LBFGS~scheme that allows the first part of the structured seed matrix to be diagonal.
	We derived two choices for the diagonal part and we compared them to each other numerically. We found that the choice \cref{eq_defgamBg} related to the geometric mean is substantially more effective. 
		
	\Ref{alg_ROSE}~is well suited for structured large-scale optimization problems,
	including many inverse problems. It comes with strong convergence guarantees, ensuring global and linear convergence in Hilbert space
	even for non-convex objective functions and absent invertibility of the Hessian. 
	These convergence results do not require the first part of the seed matrix to be diagonal, but they hold 
	in particular for the two proposed diagonal choices. 
	The underlying assumptions are especially mild if the objective includes a regularizer 
	for which a computationally cheap and uniformly positive definite Hessian approximation is available. 
	
	In the numerical experiments we have demonstrated 
	on large-scale real-world inverse problems from medical image registration 
	that \ref{alg_ROSE}~outperforms the structured \LBFGS~method TULIP from \cite{AMM24}, 
	which in combination with the findings of \cite{AMM24} implies that 
	it also exceeds other structured \LBFGS~methods and standard \LBFGS~on these problems. 
	In comparison to a scaled identity as in \Tu, the diagonal scaling used in \ref{alg_ROSE}~will usually increase the time required by the iterative linear solver to satisfy the same stopping criterion. On the other hand, diagonal scaling improves the quality of the search directions, 
	so earlier stopping of the linear solver is appropriate. 
	If this is taken into account, diagonal scaling outperforms scalar scaling in our experiments in that it obtains solutions of comparable accuracy in lower run-time. 
	\Ref{alg_ROSE}~can be implemented matrix free and an implementation is available at \href{https://github.com/hariagr/SLBFGS}{https://github.com/hariagr/SLBFGS}.
	
	Future work should assess the numerical performance of Algorithm~\ref{alg_ROSE} on additional classes of inverse problems. 
	It would also be worthwhile to incorporate other diagonal scalings in \Ro~and compare their numerical performance. 
	
%
%

\bibliographystyle{abbrvurl}
\bibliography{lit}

\begin{thebibliography}{10}

\bibitem{ACG18}
P.~Ablin, J.-F. Cardoso, and A.~Gramfort.
\newblock Faster independent component analysis by preconditioning with hessian
  approximations.
\newblock {\em IEEE Trans. Signal Process.}, 66(15):4040--4049, 2018.
\newblock \href {https://doi.org/10.1109/TSP.2018.2844203}
  {\path{doi:10.1109/TSP.2018.2844203}}.

\bibitem{AM21}
H.~O. Aggrawal and J.~Modersitzki.
\newblock Hessian initialization strategies for {{\(\ell \)}}-{BFGS} solving
  non-linear inverse problems.
\newblock In {\em Scale space and variational methods in computer vision. SSVM
  2021. Proceedings}, pages 216--228. Cham: Springer, 2021.
\newblock \href {https://doi.org/10.1007/978-3-030-75549-2_18}
  {\path{doi:10.1007/978-3-030-75549-2_18}}.

\bibitem{AB99}
M.~Al-Baali.
\newblock Improved {Hessian} approximations for the limited memory {BFGS}
  method.
\newblock {\em Numer. Algorithms}, 22(1):99--112, 1999.
\newblock \href {https://doi.org/10.1023/A:1019142304382}
  {\path{doi:10.1023/A:1019142304382}}.

\bibitem{AR10}
K.~Amini and A.~G. Rizi.
\newblock A new structured quasi-{Newton} algorithm using partial information
  on {Hessian}.
\newblock {\em J. Comput. Appl. Math.}, 234(3):805--811, 2010.
\newblock \href {https://doi.org/10.1016/j.cam.2010.01.044}
  {\path{doi:10.1016/j.cam.2010.01.044}}.

\bibitem{ABK23}
Z.~Aminifard and S.~Babaie-Kafaki.
\newblock A diagonally scaled {Newton}-type proximal method for minimization of
  the models with nonsmooth composite cost functions.
\newblock {\em Comput. Appl. Math.}, 42(8):12, 2023.
\newblock \href {https://doi.org/10.1007/s40314-023-02494-5}
  {\path{doi:10.1007/s40314-023-02494-5}}.

\bibitem{An18}
N.~Andrei.
\newblock A diagonal quasi-{Newton} updating method based on minimizing the
  measure function of {Byrd} and {Nocedal} for unconstrained optimization.
\newblock {\em Optimization}, 67(9):1553--1568, 2018.
\newblock \href {https://doi.org/10.1080/02331934.2018.1482298}
  {\path{doi:10.1080/02331934.2018.1482298}}.

\bibitem{An19}
N.~Andrei.
\newblock A diagonal quasi-{Newton} updating method for unconstrained
  optimization.
\newblock {\em Numer. Algorithms}, 81(2):575--590, 2019.
\newblock \href {https://doi.org/10.1007/s11075-018-0562-7}
  {\path{doi:10.1007/s11075-018-0562-7}}.

\bibitem{An21}
N.~Andrei.
\newblock A new accelerated diagonal quasi-{Newton} updating method with scaled
  forward finite differences directional derivative for unconstrained
  optimization.
\newblock {\em Optimization}, 70(2):345--360, 2021.
\newblock \href {https://doi.org/10.1080/02331934.2020.1712391}
  {\path{doi:10.1080/02331934.2020.1712391}}.

\bibitem{BKAG22}
S.~Babaie-Kafaki, Z.~Aminifard, and S.~Ghafoori.
\newblock Nonmonotone diagonally scaled limited-memory {BFGS} methods with
  application to compressive sensing based on a penalty model.
\newblock {\em Appl. Numer. Math.}, 181:618--629, 2022.
\newblock \href {https://doi.org/10.1016/j.apnum.2022.07.008}
  {\path{doi:10.1016/j.apnum.2022.07.008}}.

\bibitem{Ba16}
S.~Bartels.
\newblock {\em Numerical approximation of partial differential equations},
  volume~64 of {\em Texts Appl. Math.}
\newblock Cham: Springer, 2016.
\newblock \href {https://doi.org/10.1007/978-3-319-32354-1}
  {\path{doi:10.1007/978-3-319-32354-1}}.

\bibitem{BB88}
J.~Barzilai and J.~M. Borwein.
\newblock Two-point step size gradient methods.
\newblock {\em IMA J. Numer. Anal.}, 8(1):141--148, 1988.
\newblock \href {https://doi.org/10.1093/imanum/8.1.141}
  {\path{doi:10.1093/imanum/8.1.141}}.

\bibitem{BJRT22}
A.~S. Berahas, M.~Jahani, P.~Richt{\'a}rik, and M.~Tak{\'a}{\v{c}}.
\newblock Quasi-{Newton} methods for machine learning: forget the past, just
  sample.
\newblock {\em Optim. Methods Softw.}, 37(5):1668--1704, 2022.
\newblock \href {https://doi.org/10.1080/10556788.2021.1977806}
  {\path{doi:10.1080/10556788.2021.1977806}}.

\bibitem{BT20}
A.~S. Berahas and M.~Tak{\'a}{\v{c}}.
\newblock A robust multi-batch {L}-{BFGS} method for machine learning.
\newblock {\em Optim. Methods Softw.}, 35(1):191--219, 2020.
\newblock \href {https://doi.org/10.1080/10556788.2019.1658107}
  {\path{doi:10.1080/10556788.2019.1658107}}.

\bibitem{B15}
F.~Biglari.
\newblock Dynamic scaling on the limited memory {BFGS} method.
\newblock {\em Eur. J. Oper. Res.}, 243(3):697--702, 2015.
\newblock \href {https://doi.org/10.1016/j.ejor.2014.12.050}
  {\path{doi:10.1016/j.ejor.2014.12.050}}.

\bibitem{BB19}
P.~T. Boggs and R.~H. Byrd.
\newblock Adaptive, limited-memory {BFGS} algorithms for unconstrained
  optimization.
\newblock {\em SIAM J. Optim.}, 29(2):1282--1299, 2019.
\newblock \href {https://doi.org/10.1137/16M1065100}
  {\path{doi:10.1137/16M1065100}}.

\bibitem{Broit1981}
C.~Broit.
\newblock {\em Optimal Registration of Deformed Images}.
\newblock PhD thesis, University of Pennsylvania, 1981.
\newblock URL: \url{https://repository.upenn.edu/dissertations/AAI8207933}.

\bibitem{BDLP22}
J.~J. Brust, Z.~W. Di, S.~Leyffer, and C.~G. Petra.
\newblock Compact representations of structured {BFGS} matrices.
\newblock {\em Comput. Optim. Appl.}, 80(1):55--88, 2021.
\newblock \href {https://doi.org/10.1007/s10589-021-00297-0}
  {\path{doi:10.1007/s10589-021-00297-0}}.

\bibitem{BMR13}
M.~Burger, J.~Modersitzki, and L.~Ruthotto.
\newblock A hyperelastic regularization energy for image registration.
\newblock {\em SIAM J. Sci. Comput.}, 35(1):b132--b148, 2013.
\newblock \href {https://doi.org/10.1137/110835955}
  {\path{doi:10.1137/110835955}}.

\bibitem{BNS94}
R.~H. Byrd, J.~Nocedal, and R.~B. Schnabel.
\newblock Representations of quasi-{Newton} matrices and their use in limited
  memory methods.
\newblock {\em Math. Program.}, 63(2 (A)):129--156, 1994.
\newblock \href {https://doi.org/10.1007/BF01582063}
  {\path{doi:10.1007/BF01582063}}.

\bibitem{Castillo2009}
R.~Castillo, E.~Castillo, R.~Guerra, V.~E. Johnson, T.~McPhail, A.~K. Garg, and
  T.~Guerrero.
\newblock A framework for evaluation of deformable image registration spatial
  accuracy using large landmark point sets.
\newblock {\em Physics in Medicine and Biology}, 54(7):1849--1870, 2009.
\newblock \href {https://doi.org/10.1088/0031-9155/54/7/001}
  {\path{doi:10.1088/0031-9155/54/7/001}}.

\bibitem{Dener2019}
A.~Dener and T.~Munson.
\newblock Accelerating limited-memory quasi-newton convergence for large-scale
  optimization.
\newblock In J.~M.~F. Rodrigues and P.~J. S. e.~a. Cardoso, editors, {\em
  Computational Science -- ICCS 2019}, pages 495--507, Cham, 2019. Springer
  International Publishing.
\newblock \href {https://doi.org/10.1007/978-3-030-22744-9_39}
  {\path{doi:10.1007/978-3-030-22744-9_39}}.

\bibitem{DMT89}
J.~E.~j. Dennis, H.~J. Martinez, and R.~A. Tapia.
\newblock Convergence theory for the structured {BFGS} secant method with an
  application to nonlinear least squares.
\newblock {\em J. Optim. Theory Appl.}, 61(2):161--178, 1989.
\newblock \href {https://doi.org/10.1007/BF00962795}
  {\path{doi:10.1007/BF00962795}}.

\bibitem{DS79}
J.~E.~j. Dennis and R.~B. Schnabel.
\newblock Least change secant updates for quasi-{Newton} methods.
\newblock {\em SIAM Rev.}, 21:443--459, 1979.
\newblock \href {https://doi.org/10.1137/1021091} {\path{doi:10.1137/1021091}}.

\bibitem{DW81}
J.~E.~j. Dennis and H.~F. Walker.
\newblock Convergence theorems for least-change secant update methods.
\newblock {\em SIAM J. Numer. Anal.}, 18:949--987, 1981.
\newblock \href {https://doi.org/10.1137/0718067} {\path{doi:10.1137/0718067}}.

\bibitem{DW85}
J.~E.~j. Dennis and H.~F. Walker.
\newblock Least-change sparse secant update methods with inaccurate secant
  conditions.
\newblock {\em SIAM J. Numer. Anal.}, 22:760--778, 1985.
\newblock \href {https://doi.org/10.1137/0722046} {\path{doi:10.1137/0722046}}.

\bibitem{Dolan2002}
E.~D. Dolan and J.~J. Mor{\'{e}}.
\newblock Benchmarking optimization software with performance profiles.
\newblock {\em Math. Program.}, 91(2):201--213, 2002.
\newblock \href {https://doi.org/10.1007/s101070100263}
  {\path{doi:10.1007/s101070100263}}.

\bibitem{EM91}
J.~R. Engels and H.~J. Mart{\'{\i}}nez.
\newblock Local and superlinear convergence for partially known quasi-{Newton}
  methods.
\newblock {\em SIAM J. Optim.}, 1(1):42--56, 1991.
\newblock \href {https://doi.org/10.1137/0801005} {\path{doi:10.1137/0801005}}.

\bibitem{ELF16}
S.~Enshaei, W.~J. Leong, and M.~Farid.
\newblock Diagonal quasi-{Newton} method via variational principle under
  generalized {Frobenius} norm.
\newblock {\em Optim. Methods Softw.}, 31(6):1258--1271, 2016.
\newblock \href {https://doi.org/10.1080/10556788.2016.1196205}
  {\path{doi:10.1080/10556788.2016.1196205}}.

\bibitem{Fischer2004}
B.~Fischer and J.~Modersitzki.
\newblock A unified approach to fast image registration and a new curvature
  based registration technique.
\newblock {\em Linear Algebra and its Applications}, 380:107--124, 2004.
\newblock \href {https://doi.org/10.1016/j.laa.2003.10.021}
  {\path{doi:10.1016/j.laa.2003.10.021}}.

\bibitem{Fitzpatrick2001}
J.~M. Fitzpatrick and J.~B. West.
\newblock The distribution of target registration error in rigid-body
  point-based registration.
\newblock {\em {IEEE} Transactions on Medical Imaging}, 20(9):917--927, 2001.
\newblock \href {https://doi.org/10.1109/42.952729}
  {\path{doi:10.1109/42.952729}}.

\bibitem{Gilbert1989}
J.~C. Gilbert and C.~Lemar{\'e}chal.
\newblock Some numerical experiments with variable-storage quasi-{Newton}
  algorithms.
\newblock {\em Math. Program.}, 45(3 (B)):407--435, 1989.
\newblock \href {https://doi.org/10.1007/BF01589113}
  {\path{doi:10.1007/BF01589113}}.

\bibitem{H05}
E.~Haber.
\newblock Quasi-{Newton} methods for large-scale electromagnetic inverse
  problems.
\newblock {\em Inverse Probl.}, 21(1):305--323, 2005.
\newblock \href {https://doi.org/10.1088/0266-5611/21/1/019}
  {\path{doi:10.1088/0266-5611/21/1/019}}.

\bibitem{HHM09}
E.~Haber, S.~Heldmann, and J.~Modersitzki.
\newblock A scale-space approach to landmark constrained image registration.
\newblock In X.-C. Tai, K.~M{\o}rken, M.~Lysaker, and K.-A. Lie, editors, {\em
  Scale Space and Variational Methods in Computer Vision}, pages 612--623,
  Berlin, Heidelberg, 2009. Springer Berlin Heidelberg.
\newblock \href {https://doi.org/10.1007/978-3-642-02256-2_51}
  {\path{doi:10.1007/978-3-642-02256-2_51}}.

\bibitem{Haber2006}
E.~Haber and J.~Modersitzki.
\newblock Intensity gradient based registration and fusion of multi-modal
  images.
\newblock In {\em Medical Image Computing and Computer-Assisted Intervention
  {\textendash} {MICCAI} 2006}, pages 726--733. Springer Berlin Heidelberg,
  2006.
\newblock \href {https://doi.org/10.1007/11866763_89}
  {\path{doi:10.1007/11866763_89}}.

\bibitem{Heldmann2006}
S.~Heldmann.
\newblock {\em Non-linear registration based on mutual information theory,
  numerics, and application}.
\newblock Logos-Verlag, Berlin, 2006.

\bibitem{H94}
J.~Huschens.
\newblock On the use of product structure in secant methods for nonlinear least
  squares problems.
\newblock {\em SIAM J. Optim.}, 4(1):108--129, 1994.
\newblock \href {https://doi.org/10.1137/0804005} {\path{doi:10.1137/0804005}}.

\bibitem{HK92}
D.~M. Hwang and C.~T. Kelley.
\newblock Convergence of {Broyden}'s method in {Banach} spaces.
\newblock {\em SIAM J. Optim.}, 2(3):505--532, 1992.
\newblock \href {https://doi.org/10.1137/0802025} {\path{doi:10.1137/0802025}}.

\bibitem{JBES04}
L.~Jiang, R.~H. Byrd, E.~Eskow, and R.~B. Schnabel.
\newblock {A preconditioned L-BFGS algorithm with application to molecular
  energy minimization}.
\newblock {\em Computer Science Technical Reports. 919}, 2004.

\bibitem{KT19}
R.~Kimmel and X.-C. Tai, editors.
\newblock {\em Processing, analyzing and learning of images, shapes, and forms.
  {Part} 2}, volume~20 of {\em Handb. Numer. Anal.}
\newblock Amsterdam: Elsevier/North Holland, 2019.
\newblock URL:
  \url{www.sciencedirect.com/handbook/handbook-of-numerical-analysis/vol/20/suppl/C}.

\bibitem{KR13}
J.~Klemsa and J.~{\v{R}}ez{\'{a}}{\v{c}}.
\newblock Parallel low-memory quasi-newton optimization algorithm for molecular
  structure.
\newblock {\em Chem. Phys. Lett.}, 584:10--13, 2013.
\newblock \href {https://doi.org/10.1016/j.cplett.2013.08.050}
  {\path{doi:10.1016/j.cplett.2013.08.050}}.

\bibitem{Knig2018}
L.~K\"{o}nig, J.~R\"{u}haak, A.~Derksen, and J.~Lellmann.
\newblock A matrix-free approach to parallel and memory-efficient deformable
  image registration.
\newblock {\em {SIAM} Journal on Scientific Computing}, 40(3):B858--B888, 2018.
\newblock \href {https://doi.org/10.1137/17m1125522}
  {\path{doi:10.1137/17m1125522}}.

\bibitem{La00}
M.~Laumen.
\newblock A {Kantorovich} theorem for the structured {PSB} update in {Hilbert}
  space.
\newblock {\em J. Optim. Theory Appl.}, 105(2):391--415, 2000.
\newblock \href {https://doi.org/10.1023/A:1004666019575}
  {\path{doi:10.1023/A:1004666019575}}.

\bibitem{LC13}
W.~J. Leong and C.~Y. Chen.
\newblock A class of diagonal preconditioners for limited memory {BFGS} method.
\newblock {\em Optim. Methods Softw.}, 28(2):379--392, 2013.
\newblock \href {https://doi.org/10.1080/10556788.2011.653356}
  {\path{doi:10.1080/10556788.2011.653356}}.

\bibitem{LEK21}
W.~J. Leong, S.~Enshaei, and S.~L. Kek.
\newblock Diagonal quasi-{Newton} methods via least change updating principle
  with weighted {Frobenius} norm.
\newblock {\em Numer. Algorithms}, 86(3):1225--1241, 2021.
\newblock \href {https://doi.org/10.1007/s11075-020-00930-9}
  {\path{doi:10.1007/s11075-020-00930-9}}.

\bibitem{LFH12}
W.~J. Leong, M.~Farid, and M.~A. Hassan.
\newblock Scaling on diagonal quasi-{Newton} update for large-scale
  unconstrained optimization.
\newblock {\em Bull. Malays. Math. Sci. Soc. (2)}, 35(2):247--256, 2012.
\newblock URL: \url{https://math.usm.my/bulletin/pdf/v35n2/v35n2p2.pdf}.

\bibitem{LWH22}
D.~Li, X.~Wang, and J.~Huang.
\newblock Diagonal {BFGS} updates and applications to the limited memory {BFGS}
  method.
\newblock {\em Comput. Optim. Appl.}, 81(3):829--856, 2022.
\newblock \href {https://doi.org/10.1007/s10589-022-00353-3}
  {\path{doi:10.1007/s10589-022-00353-3}}.

\bibitem{LiFu01}
D.-H. Li and M.~Fukushima.
\newblock On the global convergence of the {BFGS} method for nonconvex
  unconstrained optimization problems.
\newblock {\em SIAM J. Optim.}, 11(4):1054--1064, 2001.
\newblock \href {https://doi.org/10.1137/S1052623499354242}
  {\path{doi:10.1137/S1052623499354242}}.

\bibitem{LN89}
D.~C. Liu and J.~Nocedal.
\newblock On the limited memory {BFGS} method for large scale optimization.
\newblock {\em Math. Program.}, 45(3 (B)):503--528, 1989.
\newblock \href {https://doi.org/10.1007/BF01589116}
  {\path{doi:10.1007/BF01589116}}.

\bibitem{LBLPT21}
Q.~Liu, S.~Beller, W.~Lei, D.~Peter, and J.~Tromp.
\newblock {Pre-conditioned BFGS-based uncertainty quantification in elastic
  full-waveform inversion}.
\newblock {\em Geophys. J. Int.}, 228(2):796--815, 2021.
\newblock \href {https://doi.org/10.1093/gji/ggab375}
  {\path{doi:10.1093/gji/ggab375}}.

\bibitem{AMM24}
F.~Mannel, H.~O. Aggrawal, and J.~Modersitzki.
\newblock A structured {L-BFGS} method and its application to inverse problems.
\newblock {\em Inverse Probl.}, 40 045022, 2024.
\newblock \href {https://doi.org/10.1088/1361-6420/ad2c31}
  {\path{doi:10.1088/1361-6420/ad2c31}}.

\bibitem{MR21}
F.~Mannel and A.~Rund.
\newblock A hybrid semismooth quasi-{Newton} method for nonsmooth optimal
  control with {PDEs}.
\newblock {\em Optim. Eng.}, 22(4):2087--2125, 2021.
\newblock \href {https://doi.org/10.1007/s11081-020-09523-w}
  {\path{doi:10.1007/s11081-020-09523-w}}.

\bibitem{MR22}
F.~Mannel and A.~Rund.
\newblock A hybrid semismooth quasi-{Newton} method for structured nonsmooth
  operator equations in {Banach} spaces.
\newblock {\em J. Convex Anal.}, 29(1):183--204, 2022.
\newblock URL: \url{www.heldermann.de/JCA/JCA29/JCA291/jca29011.htm}.

\bibitem{ML13}
S.~M. Marjugi and W.~J. Leong.
\newblock Diagonal {Hessian} approximation for limited memory quasi-{Newton}
  via variational principle.
\newblock {\em J. Appl. Math.}, 2013:8, 2013.
\newblock Id/No 523476.
\newblock \href {https://doi.org/10.1155/2013/523476}
  {\path{doi:10.1155/2013/523476}}.

\bibitem{FAIR09}
J.~Modersitzki.
\newblock {\em {FAIR}. {Flexible} algorithms for image registration}, volume~6
  of {\em Fundam. Algorithms}.
\newblock Philadelphia, PA: SIAM, 2009.
\newblock \href {https://doi.org/10.1137/1.9780898718843}
  {\path{doi:10.1137/1.9780898718843}}.

\bibitem{MM19}
H.~Mohammad and M.~Y. Waziri.
\newblock Structured two-point stepsize gradient methods for nonlinear least
  squares.
\newblock {\em J. Optim. Theory Appl.}, 181(1):298--317, 2019.
\newblock \href {https://doi.org/10.1007/s10957-018-1434-y}
  {\path{doi:10.1007/s10957-018-1434-y}}.

\bibitem{N80}
J.~Nocedal.
\newblock Updating quasi-{Newton} matrices with limited storage.
\newblock {\em Math. Comput.}, 35:773--782, 1980.
\newblock \href {https://doi.org/10.2307/2006193} {\path{doi:10.2307/2006193}}.

\bibitem{NW06}
J.~Nocedal and S.~J. Wright.
\newblock {\em Numerical optimization}.
\newblock New York, NY: Springer, 2nd edition, 2006.
\newblock \href {https://doi.org/10.1007/978-0-387-40065-5}
  {\path{doi:10.1007/978-0-387-40065-5}}.

\bibitem{Oren1982}
S.~S. Oren.
\newblock Perspectives on self-scaling variable metric algorithms.
\newblock {\em J. Optim. Theory Appl.}, 37:137--147, 1982.
\newblock \href {https://doi.org/10.1007/BF00934764}
  {\path{doi:10.1007/BF00934764}}.

\bibitem{Paige1975}
C.~C. Paige and M.~A. Saunders.
\newblock Solution of sparse indefinite systems of linear equations.
\newblock {\em {SIAM} Journal on Numerical Analysis}, 12(4):617--629, 1975.
\newblock \href {https://doi.org/10.1137/0712047} {\path{doi:10.1137/0712047}}.

\bibitem{PDBS20}
Y.~Park, S.~Dhar, S.~Boyd, and M.~Shah.
\newblock {Variable Metric Proximal Gradient Method with Diagonal
  Barzilai-Borwein Stepsize}.
\newblock In {\em ICASSP 2020 - 2020 IEEE International Conference on
  Acoustics, Speech and Signal Processing (ICASSP)}, pages 3597--3601, 2020.
\newblock \href {https://doi.org/10.1109/ICASSP40776.2020.9054193}
  {\path{doi:10.1109/ICASSP40776.2020.9054193}}.

\bibitem{SK09}
M.~L. Sahari and R.~Khaldi.
\newblock Quasi-{Newton} type of diagonal updating for the {L}-{BFGS} method.
\newblock {\em Acta Math. Univ. Comen., New Ser.}, 78(2):173--181, 2009.

\bibitem{TY09}
P.~Tseng and S.~Yun.
\newblock A coordinate gradient descent method for nonsmooth separable
  minimization.
\newblock {\em Math. Program.}, 117(1-2 (B)):387--423, 2009.
\newblock \href {https://doi.org/10.1007/s10107-007-0170-0}
  {\path{doi:10.1007/s10107-007-0170-0}}.

\bibitem{VAF00}
F.~Veers{\'e}, D.~Auroux, and M.~Fisher.
\newblock Limited-memory {BFGS} diagonal preconditioners for a data
  assimilation problem in meteorology.
\newblock {\em Optim. Eng.}, 1(3):323--339, 2000.
\newblock \href {https://doi.org/10.1023/A:1010030224033}
  {\path{doi:10.1023/A:1010030224033}}.

\bibitem{V95}
P.~Viola.
\newblock {\em Alignment by maximization of mutual information}.
\newblock PhD thesis, Massachusetts Institute of Technology, 1995.

\bibitem{YY96}
H.~Yabe and N.~Yamaki.
\newblock Local and superlinear convergence of structured quasi-{Newton}
  methods for nonlinear optimization.
\newblock {\em J. Oper. Res. Soc. Japan}, 39(4):541--557, 1996.
\newblock \href {https://doi.org/10.15807/jorsj.39.541}
  {\path{doi:10.15807/jorsj.39.541}}.

\bibitem{YGJ18}
H.~Yang, M.~Gunzburger, and L.~Ju.
\newblock Fast spherical centroidal {Voronoi} mesh generation: a
  {Lloyd}-preconditioned {LBFGS} method in parallel.
\newblock {\em J. Comput. Phys.}, 367:235--252, 2018.
\newblock \href {https://doi.org/10.1016/j.jcp.2018.04.034}
  {\path{doi:10.1016/j.jcp.2018.04.034}}.

\bibitem{ZC10}
W.~Zhou and X.~Chen.
\newblock Global convergence of a new hybrid {Gauss}-{Newton} structured {BFGS}
  method for nonlinear least squares problems.
\newblock {\em SIAM J. Optim.}, 20(5):2422--2441, 2010.
\newblock \href {https://doi.org/10.1137/090748470}
  {\path{doi:10.1137/090748470}}.

\bibitem{ZNW99}
M.~Zhu, J.~L. Nazareth, and H.~Wolkowicz.
\newblock The quasi-{Cauchy} relation and diagonal updating.
\newblock {\em SIAM J. Optim.}, 9(4):1192--1204, 1999.
\newblock \href {https://doi.org/10.1137/S1052623498331793}
  {\path{doi:10.1137/S1052623498331793}}.

\end{thebibliography}

\end{document}